\newtheorem{thm}{Theorem}[section]
\newtheorem{prop}[thm]{Proposition}
\newtheorem{cor}[thm]{Corollary}
\newtheorem{lemma}[thm]{Lemma}
\newtheorem{defn}[thm]{Definition}
\newtheorem{preremark}[thm]{Remark}
\newenvironment{remark}{\begin{preremark}\rm}{\medskip \end{preremark}}
\numberwithin{equation}{section}
\newcommand{\R}{\mathbb R}
\newcommand{\eps}{\varepsilon}
\newcommand{\dd} {\; \mathrm{d}}
\newcommand{\cOne}{c_K}
\newcommand{\cP}{C_P}
\newcommand{\IB}{\mathcal{I}_\B}
\newcommand{\hs}{\mathcal{S}}
\newcommand{\is}{\mathcal{I}}
\newcommand{\js}{\mathcal{J}}
\newcommand{\bb}{\mathfrak{b}}
\newcommand{\db}{b'}
\newcommand{\dalpha}{\alpha'}
\newcommand{\B}{\mathcal B}
\title{On the monotonicity of the Fisher information for the Boltzmann equation}
\date{\today}
\author{Cyril Imbert}
\address{Département de mathématiques et applications, École normale supérieure, Université PSL, CNRS, 75005 Paris, France}
\email{cyril.imbert@ens.fr}
\author{Luis Silvestre}
\address{Mathematics Department, University of Chicago, Chicago, IL 60637, USA}
\email{luis@math.uchicago.edu}
\author{C\'edric Villani}
\address{Université Claude Bernard Lyon 1, Institut Camille Jordan, 69622 Villeurbanne Cedex, and Institut des Hautes Études Scientifiques (IHES), Bures sur Yvette. France}
\email{cv@cedricvillani.org}
\thanks{The authors warmly thank Cl\'ement Mouhot for fruitful discussions during the preparation of this paper. The authors also thank the Conference and Summer school Mathemata / Festum Pi held in Chania, Crete, July 2024, where part of this research was performed. Luis Silvestre is supported by NSF grants DMS-2054888 and DMS-2350263}
\begin{document}

\begin{abstract}
We prove that the Fisher information is monotone decreasing in time along solutions of the space-homogeneous Boltzmann equation for a large class of collision kernels covering all classical interactions derived from systems of particles. For general collision kernels, a sufficient condition for the monotonicity of the Fisher information along the flow is related to the best constant for an integro-differential inequality for functions on the sphere, which belongs in the family of the Log-Sobolev inequalities. As a consequence, we establish the existence of global smooth solutions to the space-homogeneous Boltzmann equation in the main situation of interest where this was not known, namely the regime of very soft potentials. This is opening the path to the completion of both the classical program of qualitative study of space-homogeneous Boltzmann equation, initiated by Carleman, and the program of using the Fisher information in the study of the Boltzmann equation, initiated by McKean. From the proofs and discussion emerges a strengthened picture of the links between kinetic theory, information theory and log-Sobolev inequalities.
\end{abstract}

\maketitle

\setcounter{tocdepth}{1}
\tableofcontents

\section{Introduction}

\subsection{The space-homogeneous Boltzmann equation}

We consider the space-homogeneous Boltzmann equation in  $\R^d$,
\begin{equation} \label{e:boltzmann}
\partial_t f = q(f),
\end{equation}
where the collision operator is defined by
\[ q(f) = \int_{w \in \R^d} \int_{\sigma \in \R^{d-1}} (f(w')f(v') - f(w) f(v)) B(|v-w|,\cos\theta) \dd \sigma \dd w.\]
We recall that the function  $B \colon [0,+\infty) \times [-1,1] \to (0,+\infty)$ is known as the \emph{collision kernel} of the operator and velocities $v'$ and $w'$ are given by the formulas
\[ v' = \frac{v+w}2 + \frac{|v-w|}2 \sigma \quad \text{ and } \quad w' = \frac{v+w}2 - \frac{|v-w|}2 \sigma
  \quad \text{ and } \quad \cos \theta = \frac{v-w}{|v-w|} \cdot \frac{v'-w'}{|v'-w'|}.\]

A common class of collision kernels $B$ considered in the literature, ever since Maxwell pioneered the field, is written as a product of two nonnegative functions $B(|v-w|,\cos\theta) = \alpha(|v-w|/2) \, b(\cos\theta)$. This class includes hard spheres and inverse power-law potentials (force proportional to the power $q$ of the distance between molecules, $q\geq 2$):
\begin{itemize}
\item For hard-spheres: $\alpha(r) = r$ and $b(\cos \theta) = (\sin(\theta/2))^{-(d-3)}$; in particular, $b \equiv 1$ in three dimensions.
\item For inverse power-law potentials: $\alpha(r) = r^\gamma$ and $b(c) \approx (1-c^2)^{-(d-1+2s)/2}$, for $\gamma = (q-2d+1)/(q-1)$ and $2s = (d-1)/(q-1)$.
\end{itemize}

The case of Coulomb interaction does not, strictly speaking, fall in the realm of the Boltzmann equation, because the angular singularity is too strong. The obstruction here is physical as well as mathematical: In fact, the most generic condition for the operator $q(f)$ to make sense is
\begin{equation} \label{e:generic-b}
\forall e \in S^{d-1}, \qquad \int_{S^{d-1}} (1-(e\cdot \sigma)^2) b(e\cdot \sigma) \dd \sigma < +\infty.
\end{equation}
(Note that the left-hand side of \eqref{e:generic-b} is independent of the choice of $e \in S^{d-1}$.) In the case of Coulomb interaction in dimension 3, the integral above is borderline divergent. This was resolved by Landau through a diffusive approximation of the Boltzmann equation, which is important both for theoretical and practical applications in plasma physics and more generally kinetic theory \cite{villani-crete}.

It was in the 1930's that Carleman initiated the mathematical study of the space-homogeneous Boltzmann equation, asking for the basic properties of localization (decay at infinity), integrability, regularity, and positivity (lower bound). While the assumption of space-homogeneity is of course a huge simplification, it is a legitimate, and even mandatory, step to understand the properties of the Boltzmann equation and in particular the complexity of the Boltzmann operator itself. Carleman \cite{carleman} proved the existence of classical solutions for hard spheres in dimension 3, under some symmetry assumptions, and established their convergence to Maxwellian (Gaussian) equilibrium in large time. 

This program was then carried and extended by many researchers, covering more and more singular situations. Arkeryd \cite{zbMATH03388763} pioneered the systematic study of hard potentials with cutoff, and later \cite{ark:infini:81} hard or moderately soft potentials with cutoff. Bobylev \cite{bob:theory:88} tackled Maxwellian molecules with considerable depth. Goudon \cite{goud:graz:97} and the third author \cite{zbMATH01221506} considered more general soft and very soft potentials, respectively. Over the years, a number of regularity techniques have been introduced and refined to tackle these situations. These include moment inequalities, integrability and regularity of Boltzmann's gain operator, Fourier analysis of the angular singularity, integral regularity estimates, iterative schemes in Moser style, maximum principles of various kinds, Harnack inequalities for jump processes. Far from being exhaustive, among contributors we may mention Arkeryd, Desvillettes, Gustafsson, Lions, Mischler, Mouhot, Wennberg and the three authors themselves.
A review of these methods before 2002 can be found in \cite{villani-survey}; further references will be given along the way. 

In spite of this, regularity in the most singular range, that is, very soft potentials, has long remained elusive, forcing authors to focus on ad hoc notions of time-integrated weak solutions involving entropy production \cite{ADVW,carlen2009,golse2023partialregularitytimespacehomogeneous,zbMATH01221506}.
It was known that $L^p$ integrability for $p$ large enough would yield regularity, but the integrability estimate was nowhere to be found with known tools, leaving experts to wonder about the possibility of regularity blow up.

We are, in this paper, precisely closing this gap by establishing a new a priori estimate in $L^3 (\R^3)$, or more generally $(L^{d/(d-2)}(\R^d))$ ($L^p$ for any $p$ if $d=2$), for a wide class of collision kernels, factorized or not, actually including all classical inverse power-law potentials: hard, soft, very soft, cutoff or noncutoff, and hard spheres, in any dimension. This will be obtained by showing a result of mathematical and physical interest in itself: For a wide class of collision kernels, the usual Fisher information
\[ i(f) := \int_{\R^d} \frac{|\nabla f|^2} f \dd v\]
is monotone decreasing in time along solutions of the space-homogeneous Boltzmann equation \eqref{e:boltzmann}.

Not only does this result allow us to envision the completion of Carleman's program on the qualitative analysis of the space-homogeneous Boltzmann equation, but it is also, in some sense, the completion of a program initiated by McKean \cite{mckean1966} on Kac's one-dimensional caricature of the Boltzmann equation. Before saying more on this, let us describe our results more precisely. This will lead us to a detour into the theory of entropic inequalities for diffusion processes.

\subsection{Monotonicity of the Fisher information}

Let us start with a simple informal statement about interaction potentials proportional to the inverse power $q$ of the intermolecular distance, with $q \geq d-1$ (that is, decaying at least as fast as  $d$-dimensional Coulomb) and $q> (d+1)/2$ (to ensure the convergence of the cross-section for momentum transfer, \eqref{e:generic-b}).

\begin{thm}[The Fisher information decreases along the Boltzmann flow] \label{t:main-simpler}
Let $B$ be the collision kernel for hard-spheres, or for any of the physically relevant inverse power-law potentials, then the Fisher information of a solution $f$ to \eqref{e:boltzmann} is nonincreasing in time.
\end{thm}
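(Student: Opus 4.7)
The plan is to differentiate the Fisher information along the Boltzmann flow and reduce the sign of the resulting quantity to an integro-differential inequality on $S^{d-1}$ that can then be verified for the stated physical kernels.

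Write $u=\log f$, so that $\Delta f/f = \Delta u + |\nabla u|^2$ and $|\nabla f|^2/f^2 = |\nabla u|^2$. A direct integration by parts in $v$ yields
\[
\frac{d}{dt}\,i(f) \;=\; -\int q(f)\,\bigl(2\Delta u + |\nabla u|^2\bigr)\,dv.
\]
Inserting the standard pre-/post-collisional weak form of $q(f)$ on the right-hand side produces a four-point integral over $v,w,\sigma$ against the weight $B(|v-w|,\cos\theta)$, in which the bilinearity of $q$ meets a quadratic test function built from $u$. Writing $f'f'_*-ff_*=ff_*\,(e^{u'+u'_*-u-u_*}-1)$ and symmetrizing with respect to the involutions $(v,w,\sigma)\leftrightarrow(v',w',-\sigma)$ and $(v,w)\leftrightarrow(w,v)$, I would rearrange the integrand into the form $B\,ff_*\,\mathcal{Q}(u;v,w,\sigma)$, where $\mathcal{Q}$ is a quadratic expression in the log-density and its first two derivatives evaluated at the four collision points.

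The decisive step is to expose the sphere structure of the collision. Changing variables to the center of mass $V=(v+w)/2$ and the relative velocity $z=v-w$, the collision preserves both $V$ and $r=|z|$ and merely rotates the direction $\omega=z/|z|$. At fixed $(V,r)$, the form $\mathcal{Q}$ becomes a bilinear expression in the function $\omega\mapsto u\bigl(V+r\omega/2\bigr)+u\bigl(V-r\omega/2\bigr)$ on $S^{d-1}$, so the monotonicity of $i(f)$ is implied, pointwise in $(V,r)$, by a functional inequality of the schematic form
\[
\int_{S^{d-1}\times S^{d-1}} b(\omega\cdot\sigma)\,\mathcal{P}(U;\omega,\sigma)\,d\omega\,d\sigma \;\ge\; 0
\]
for every $U:S^{d-1}\to\mathbb{R}$, where $\mathcal{P}$ is the quadratic form in $U$ and its tangential derivatives induced by $\mathcal{Q}$. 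This is an integro-differential inequality on $S^{d-1}$ of the log-Sobolev family with angular weight $b$, precisely the sufficient condition announced in the abstract.

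With this reduction in hand, the remaining task is to check the sphere inequality for the two families listed in the statement: for hard spheres (where $b\equiv 1$ in $d=3$) it reduces to an explicit eigenvalue-type estimate, while for inverse power-law kernels the singular angular factor $b(\cos\theta)\approx(1-\cos^2\theta)^{-(d-1+2s)/2}$ must be controlled by a careful expansion near $\theta=0$ combined with the integrability condition \eqref{e:generic-b}. The radial prefactor $\alpha(|v-w|/2)$ enters only as a nonnegative weight and plays no role in the sign. I expect the main obstacle to lie exactly in this last analytic step: identifying the correct quadratic form $\mathcal{P}$, locating its sharp constant, and showing that the angular parts of all physically relevant kernels---including the very soft regime $\gamma<0$ that defeated earlier approaches---lie in its admissible range.
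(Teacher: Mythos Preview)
Your reduction to a sphere inequality is the right strategy, and the change to center-of-mass/relative variables is exactly what the paper does (via the lifting $F=f\otimes f$ to $\R^{2d}$). But there is a genuine gap in your treatment of the radial factor. You assert that ``the radial prefactor $\alpha(|v-w|/2)$ enters only as a nonnegative weight and plays no role in the sign.'' This is precisely the point at which the argument fails for non-Maxwellian kernels, and overcoming it is the main contribution of the paper.

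The Fisher information contains derivatives of $f$ in \emph{all} directions---in particular in $r=|v-w|/2$ after your change of variables. When you differentiate the radial piece $\int |\partial_r F|^2/F$ along $Q(F)=\alpha(r)\,\B F$, the product rule hits $\alpha(r)$ and produces a cross term proportional to $\alpha'(r)$. After symmetrization and completion of squares (Lemma~\ref{l:Iradial}) this leaves a \emph{positive} remainder
\[
\frac12\iiiint \frac{(\alpha')^2}{\alpha}\,\frac{(F'-F)^2}{F'+F}\,b(\sigma'\cdot\sigma)\,r^{d-1}\,d\sigma'\,d\sigma\,dr\,dz,
\]
which must be absorbed by the negative spherical contribution. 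Consequently the sphere inequality cannot be merely $\ge 0$ as you wrote; it must hold with a \emph{quantitative} constant $\Lambda_b>0$ (inequality~\eqref{e:spherical-inequality}), and the monotonicity criterion becomes $r|\alpha'(r)|/\alpha(r)\le 2\sqrt{\Lambda_b}$, i.e.\ $|\gamma|\le 2\sqrt{\Lambda_b}$ for power laws. For Maxwell molecules $\alpha'\equiv 0$ and your outline suffices, which is exactly why the older results of McKean, Toscani and Villani went through. For hard spheres ($|\gamma|=1$) and especially for very soft potentials ($|\gamma|$ close to $3$), the whole difficulty is to prove that $\Lambda_b$ is large enough; this is the analytic core of the paper (Sections~\ref{s:curvature}--\ref{s:subordinate}), and for the full inverse-power-law range in 3D it ultimately relies on the subordinate-kernel comparison and numerical verification of Appendix~\ref{s:numerical}. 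Your final paragraph anticipates a sharp-constant issue on the sphere side, but misidentifies its origin: it is generated by $\alpha'$, not by the angular singularity of $b$.
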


Our main result applies to a much wider class of kernels, factorized or not, under some condition. In order to explain it, we first consider the factorized case in which $B$ is the product of $\alpha(r/2)$ with an angular cross-section, $b(\cos\theta)$. Keeping only the angular variation of the kernel, define the spherical linear Boltzmann operator $\B$, acting on functions $f \colon S^{d-1} \to \R$, by
\begin{equation} \label{e:spherical-operator}
\B f(\sigma) := \int_{S^{d-1}} \left( f(\sigma') - f(\sigma) \right) b(\sigma' \cdot \sigma) \dd \sigma'.
\end{equation}
This integro-differential operator defines a diffusion semigroup, and there is a corresponding \emph{carré du champ} operator in the terminology of Bakry-\'Emery \cite{zbMATH03894218}. Let $\Gamma^2_{\B,\Delta}$ be the iterated carré du champ operator obtained by differentiation first along the flow of the usual heat equation and then along the flow of $\B$ (see Section \ref{s:carré-du-champ} for a precise definition and practical formulas, Lemmas~\ref{l:Gamma2-with-T} and \ref{l:Gamma2-with-vector-fields}). Let $\Lambda_b \geq 0$ be the largest constant such that the following functional inequality holds: For every function $f \colon S^{d-1} \to (0,\infty)$ such that $f(\sigma) = f(-\sigma)$, 
\begin{equation} \label{e:spherical-inequality}
\begin{aligned} \int_{S^{d-1}} & \Gamma^2_{\B,\Delta} (\log f,\log f) f \dd \sigma \geq \Lambda_b \iint_{S^{d-1} \times S^{d-1}} \frac{(f(\sigma') - f(\sigma))^2} {f(\sigma')+f(\sigma)} b(\sigma' \cdot \sigma) \dd \sigma ' \dd \sigma.
 \end{aligned}
\end{equation}
Obviously the value of $\Lambda_b$ is a property of $b$; we shall provide recipes to evaluate $\Lambda_b$, and examples in which it is positive and other ones for which it is zero. But before we move on to this, let us see how the constant $\Lambda_b$ allows us to formulate our monotonicity theorem.

\begin{thm}[The Fisher information decreases along the Boltzmann flow] \label{t:main}
Assume that $\alpha$ and $b$ are such that, for all  $r>0$,
\[ \frac{r |\dalpha (r)|}{ \alpha(r)} \leq 2\sqrt{\Lambda_b},\]
then the Fisher information of any solution $f$ to \eqref{e:boltzmann} with collision kernel $B = \alpha(r/2) b(\cos\theta)$ is nonincreasing in time.
\end{thm}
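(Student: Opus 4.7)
The plan is to reduce the claim to a Bakry--\'Emery style positivity of an iterated carr\'e du champ. The first step is to \emph{tensorise}: set $F_t(v,w) = f_t(v)f_t(w)$ on $\R^{2d}$, so that $I_{\R^{2d}}(F_t) = 2 i(f_t)$ by additivity of the Fisher information on tensor products, and pass to the coordinates $u = (v+w)/2$, $v-w = 2 r\sigma$. In these variables the Boltzmann equation for $f$ induces an evolution of the form $\partial_t F = L F$ in which $L$ acts as $\alpha(r)\,\B_\sigma$ in the spherical variable $\sigma$, at each fixed $(u,r)$, with $\B_\sigma$ the spherical operator from \eqref{e:spherical-operator}. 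The exchange symmetry $F(v,w)=F(w,v)$ translates into the parity $\sigma \mapsto -\sigma$ required by \eqref{e:spherical-inequality}, making that hypothesis applicable to $F$ slice by slice.

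Next, invoke the general de Bruijn--type identity
\[
\frac{d}{dt}\, I_{\R^{2d}}(F_t) \;=\; -2\int \Gamma^{2}_{L,\Delta}(\log F_t, \log F_t)\, F_t\, \dd V,
\]
where $\Gamma^{2}_{L,\Delta}$ is the iterated carr\'e du champ obtained by differentiating first along the heat flow and then along $L$, the very object whose practical computation is recorded in Lemmas~\ref{l:Gamma2-with-T}--\ref{l:Gamma2-with-vector-fields}. Monotonicity of $i(f_t)$ thus reduces to showing pointwise (or integrated) nonnegativity of $\Gamma^{2}_{L,\Delta}$ on tensor products of the form $f\otimes f$.

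The third step is to compute $\Gamma^{2}_{L,\Delta}$ in the coordinates $(u,r,\sigma)$, where the ambient Laplacian splits as $\Delta_u + \partial_{rr} + \tfrac{d-1}{r}\partial_r + \tfrac{1}{r^2}\Delta_\sigma$. The $u$-block commutes with $L$ and contributes nonnegatively. The pure spherical block produces $\tfrac{\alpha(r)}{r^2}\Gamma^{2}_{\B,\Delta_\sigma}(\log F, \log F)$, which by the spherical inequality \eqref{e:spherical-inequality} dominates $\tfrac{\Lambda_b\,\alpha(r)}{r^2}$ times the angular jump dissipation $\iint\tfrac{(F(\sigma')-F(\sigma))^2}{F(\sigma')+F(\sigma)} b(\sigma\cdot\sigma')\dd\sigma'\dd\sigma$. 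The genuinely new piece is a mixed radial--spherical cross term, bilinear in $\partial_r\log F$ and the angular difference $\log F(\sigma')-\log F(\sigma)$, with coefficient $\dalpha(r)$ arising from commuting $\partial_r$ past $\alpha(r)$. Completing the square in this bilinear form converts the total expression into a nonnegative quadratic form precisely under the assumption $(r\dalpha/\alpha)^2 \leq 4\Lambda_b$, i.e.\ the hypothesis of the theorem.

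The main obstacle is the careful bookkeeping in this $\Gamma^{2}$ computation for a nonlocal operator with a coordinate-dependent radial prefactor: one has to isolate the mixed radial--spherical cross term in a form amenable to a Cauchy--Schwarz/completion-of-the-square against the spherical dissipation with the \emph{sharp} constant $2\sqrt{\Lambda_b}$, rather than a suboptimal one. A secondary and more routine difficulty is the usual justification of the formal differentiation under the integral for solutions of sufficient regularity and tail decay, and an approximation argument to pass to the general solutions of interest.
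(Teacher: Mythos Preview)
Your overall strategy---tensorise to $F=f\otimes f$, pass to the coordinates $(z,r,\sigma)$, split the Fisher information into parallel, radial, and spherical blocks, use the spherical inequality \eqref{e:spherical-inequality} on the $\sigma$-block, and absorb the radial cross term by completing the square---is exactly the paper's. But two of the steps you invoke as black boxes are not correct as stated, and the paper's proof has to work around both.

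First, $F_t=f_t\otimes f_t$ does \emph{not} satisfy $\partial_t F=LF$ with $L=\alpha(r)\B_\sigma$; its actual evolution is $\partial_t F=q(f)\otimes f+f\otimes q(f)$. What is true---and what the paper uses, citing \cite{landaufisher2023}---is the lifting identity $\langle i'(f),q(f)\rangle=\tfrac12\langle I'(F),Q(F)\rangle$, valid because $q(f)=\pi Q(f\otimes f)$ and the Fisher information is superadditive on marginals with equality on tensor products. Your line ``$I_{\R^{2d}}(F_t)=2i(f_t)$ by additivity'' is right but does not by itself give the time-derivative relation. Second, the ``general de Bruijn--type identity'' $\langle I'(F),LF\rangle=-2\int F\,\Gamma^2_{L,\Delta}(\log F,\log F)$ requires $[L,\Delta]=0$ (trace the proof of Lemma~\ref{l:fisher-along-B}: the key step $\int\nabla\log f\cdot\nabla\B f=\int\nabla\B\log f\cdot\nabla f$ uses $\B\Delta=\Delta\B$). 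Here $L=\alpha(r)\B_\sigma$ does not commute with the radial part $\partial_{rr}+\tfrac{d-1}{r}\partial_r$ of the Laplacian, precisely because of the prefactor $\alpha(r)$. The paper therefore does \emph{not} package everything into a single $\Gamma^2_{L,\Delta_{\R^{2d}}}$; instead it differentiates $I_{parallel}$, $I_{radial}$, $I_{spherical}$ separately along $Q(F)$ (Lemmas~\ref{l:Iparallel}, \ref{l:Iradial}, \ref{l:spherical-diffusion}). Only the spherical piece becomes a genuine $\Gamma^2_{\B,\Delta_\sigma}$ (there $\B$ and $\Delta_\sigma$ \emph{do} commute, Lemma~\ref{l:BandDelta_commute}). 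The radial piece produces, after symmetrisation, a diagonal term $-\tfrac12\int\alpha|\partial_r\log F'-\partial_r\log F|^2(F'+F)b$ plus a cross term $\int\alpha'(\partial_r\log F-\partial_r\log F')(F'-F)b$; completing the square against the diagonal leaves the residual $R=\tfrac12\int\tfrac{(\alpha')^2}{\alpha}\tfrac{(F'-F)^2}{F'+F}b\,r^{d-1}$, which is then dominated by $D_{spherical}$ via \eqref{e:spherical-inequality} exactly under $r|\alpha'|/\alpha\le 2\sqrt{\Lambda_b}$. Note the cross term is bilinear in $(\partial_r\log F'-\partial_r\log F)$ and $(F'-F)$, not in $\partial_r\log F$ and $\log F'-\log F$ as you wrote. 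Once you replace your two black boxes by these direct computations, your outline becomes the paper's proof.
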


\begin{remark}
Note that in the common case that $\alpha(r) = r^\gamma$, the left hand side of the assumption of Theorem~\ref{t:main} is $r |\dalpha (r)| / \alpha(r) = |\gamma|$. Therefore, the assumption of Theorem~\ref{t:main} takes the form of a restriction on the range of $\gamma \in [-2\sqrt{\Lambda_b},2\sqrt{\Lambda_b}]$.
\end{remark}

\begin{remark}
In the special case $d=2$, the inequality \eqref{e:spherical-inequality} simplifies and can be stated in an elementary way: It says that for every function $f \colon \R \to (0,\infty)$ that is $\pi$-periodic, we have
\begin{equation} \label{e:inequality2D}
\begin{aligned} \iint_{[0,\pi] \times [0,\pi/2]} &\bigl |\partial_\theta \log f(\theta+h) - \partial_\theta \log f(\theta)\bigr|^2 f(\theta) b(h) \dd \theta \dd h \\ \qquad &\geq \Lambda_b \iint_{[0,\pi] \times [0,\pi/2]} \frac{(f(\theta+h) - f(\theta))^2} {f(\theta+h)+f(\theta)} b(h) \dd \theta \dd h.
 \end{aligned}
\end{equation}
\end{remark}

To state our more general results, it will suffice to apply the previous criterion to the angular cross-section $b(\cos \theta) = B(r,\cos \theta)$, for each fixed value of $r > 0$: Explicitly,

\begin{thm}[The Fisher information decreases along the Boltzmann flow, more general version] \label{t:main:gen}
Assume that $B=B(r,\cos\theta)$ satisfies, for all $r>0$ and $\theta\in [0,\pi]$,
\[ \frac{r |\partial_r B(r,\cos \theta)|}{B(r,\cos \theta)} \leq 2 \sqrt{\Lambda_{B(r,\cdot)}}.\]
Then the Fisher information of any solution $f$ to \eqref{e:boltzmann} with collision kernel $B$ is nonincreasing in time.
\end{thm}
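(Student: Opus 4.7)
The plan is to derive Theorem~\ref{t:main:gen} from Theorem~\ref{t:main} by re-running the proof of the latter pointwise in the relative-speed variable $r := |v-w|$. The crucial observation is that the dissipation of the Fisher information is linear in the collision kernel, which should allow one to ``freeze'' $r$ in the angular analysis and apply the spherical inequality \eqref{e:spherical-inequality} slice by slice.

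First I would revisit the identity underlying Theorem~\ref{t:main}. Computing $\tfrac{d}{dt} i(f)$ along the flow and changing to center-of-mass style coordinates in which $r = |v-w|$ is an explicit integration variable, one expects a representation of the schematic form
\begin{equation*}
-\frac{d}{dt} i(f) \;=\; \int_0^\infty \alpha(r/2)\,\mathcal{A}_b(r)\,\dd r \;-\; \int_0^\infty \alpha'(r/2)\,\mathcal{C}_b(r)\,\dd r,
\end{equation*}
where $\mathcal{A}_b(r) \geq 0$ is an angular ``$\Gamma^2_{\B,\Delta}$-type'' dissipation built from $b$ and the conditional marginal of $f\otimes f$ at fixed relative speed $r$, and $\mathcal{C}_b(r)$ is a cross term arising from differentiating the modulus $|v-w|$. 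Theorem~\ref{t:main} is then obtained by combining \eqref{e:spherical-inequality}, applied pointwise in $r$, with a Cauchy--Schwarz bound of the form $|\mathcal{C}_b(r)| \leq \sqrt{\mathcal{A}_b(r)\,\mathcal{D}_b(r)}$ (where $\mathcal{D}_b$ is the right-hand side of \eqref{e:spherical-inequality}) together with the hypothesis $r|\alpha'(r)|/\alpha(r) \leq 2\sqrt{\Lambda_b}$, which is precisely what is needed to absorb the cross term into the main one.

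To prove Theorem~\ref{t:main:gen} I would simply rerun this derivation with $B(r,\cos\theta)$ replacing $\alpha(r/2) b(\cos\theta)$ throughout. Because $B$ enters the Fisher dissipation linearly and, pointwise in $r$, the angular manipulations only see the function $\sigma \mapsto B(r,\cos\theta)$ and the radial derivative $\partial_r B(r,\cos\theta)$, one should obtain an identity of the form
\begin{equation*}
-\frac{d}{dt} i(f) \;=\; \int_0^\infty \mathcal{A}_{B(r,\cdot)}(r)\,\dd r \;-\; \int_0^\infty \mathcal{C}_{B(r,\cdot)}(r,\partial_r B)\,\dd r.
\end{equation*}
For each fixed $r$, the defining inequality \eqref{e:spherical-inequality} for $\Lambda_{B(r,\cdot)}$ yields $\mathcal{A}_{B(r,\cdot)}(r) \geq \Lambda_{B(r,\cdot)} \mathcal{D}_{B(r,\cdot)}(r)$. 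Applying Cauchy--Schwarz pointwise in $r$ and using the assumed bound $r|\partial_r B|/B \leq 2\sqrt{\Lambda_{B(r,\cdot)}}$ at each $r$ to absorb the cross term into the main dissipation, one concludes $-\tfrac{d}{dt} i(f) \geq 0$ after integration in $r$.

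The main obstacle is verifying that the identity underlying Theorem~\ref{t:main} is genuinely \emph{pointwise in} $r$: each integration by parts, change of variables, and carré du champ identity in the computation must depend on the kernel only through the value of $B(r,\cdot)$ and its radial partial derivative at fixed $r$, without secretly exploiting the product structure $\alpha(r/2)\,b(\cos\theta)$. The very phrasing of Theorem~\ref{t:main:gen}--with $\Lambda_{B(r,\cdot)}$ evaluated at each $r$ and $r\partial_r B/B$ playing the role of $r\alpha'/\alpha$--strongly suggests that the authors' derivation is already set up to be ``pointwise in $r$,'' in which case the general version is a clean corollary rather than an independent argument.
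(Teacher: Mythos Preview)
Your proposal is correct and matches the paper's approach: the paper explicitly states that the proof of Theorem~\ref{t:main:gen} is the same as that of Theorem~\ref{t:main} ``through very simple changes and a bit of cumbersome notation,'' and your observation that the computations in Lemmas~\ref{l:Iparallel}, \ref{l:Iradial}, \ref{l:spherical-diffusion} and Section~\ref{s:generic} are genuinely pointwise in $r$ is exactly the reason. The only minor adjustment to your sketch is that the cross term is absorbed by completing squares against the \emph{radial} dissipation (not the spherical $\Gamma^2$ term) in Lemma~\ref{l:Iradial}, yielding a residual $R$ with integrand $\tfrac{(\partial_r B)^2}{B}\tfrac{(F'-F)^2}{F'+F}$, and it is this $R$ that is then dominated by $D_{spherical}$ via \eqref{e:spherical-inequality} and the hypothesis $r|\partial_r B|/B \le 2\sqrt{\Lambda_{B(r,\cdot)}}$ at each fixed $r$.
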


The proof of Theorem \ref{t:main:gen} is the same as that of Theorem \ref{t:main}, through very simple changes and a bit of cumbersome notation, so we shall focus on just Theorem \ref{t:main}. The proof adapts the main ideas of \cite{landaufisher2023} from the purely diffusive setting of the Landau equation with Coulomb interaction, to the integro-differential setting of the Boltzmann equation. In doing so, we face a major new challenge, namely to explicitly estimate $\Lambda_b$ from below for concrete kernels $b$. We will explain how this problem is related to integro-differential versions of the log-Sobolev inequality on the sphere, or more properly on its quotient by $\{\pm {\rm Id}\}$, the real projective space -- an area of research that is connected to a large body of literature but which in itself had remained relatively unexplored. We will also give two complementary recipes to address this estimate in concrete settings. Along the way, dimension and curvature effects will appear. Here is a main example:

\begin{itemize}
\item In dimension $d=2$ there are singular kernels such that $\Lambda_b=0$;

\item In dimensions 3 and higher, for any kernel $b$, singular or not, it holds $\Lambda_b>0$.
\end{itemize}

These results are proved  in Section \ref{s:counterexample} and Proposition \ref{p:spherical-inequality-from-curvature}, respectively. Estimating $\Lambda_b$ explicitly and with fair degree of precision will turn out to be the most difficult part of the analysis in this paper. We shall establish in particular that:

\begin{itemize}
	\item When $d \geq 3$, for any kernel satisfying merely \eqref{e:generic-b}, \eqref{e:spherical-inequality} holds with \[ \Lambda_b \geq d-2. \]
	This is proved in Proposition \ref{p:spherical-inequality-from-curvature}.
	\item If $b$ is a constant kernel (and in particular for hard spheres in dimension three), we prove that \eqref{e:spherical-inequality} holds with $\Lambda_b \geq d$.
	This is proved in Proposition~\ref{p:hard-spheres}.
	\item If $b$ is the hard sphere kernel in dimension 2, then $\Lambda_b \geq \sqrt{2}$. This is explained in Subsection~\ref{subsec:comments}.
	\item For any dimension, assume that the kernel $b$ can be written in the form
	\begin{equation} \label{e:intro-subordinate} b(c) = \int_0^\infty h_t(c) \omega(t) \dd t, \end{equation}
	where $h_t$ is the heat kernel on the sphere. In this case we show that \eqref{e:spherical-inequality} holds with $\Lambda_b > d$ (Proposition \ref{p:main-for-subordinate}).
	\item Assume that $b_0$ is a reference kernel for which \eqref{e:spherical-inequality} holds with parameter $\Lambda_0$. Let $b$ be another kernel such that the following inequality holds for all $c \in [-1,1]$,
\begin{equation} \label{e:kernel-comparison}
	c_1 \bigg(b_0 (c) + b_0 (-c) \bigg) \leq b(c) + b(-c)  \leq C_2 \bigg( b_0 (c) + b_0 (-c) \bigg).
	\end{equation}
	Then \eqref{e:spherical-inequality} holds for $b$ with $\Lambda_b \geq c_1 \Lambda_0/C_2.$
\end{itemize}

\subsection{Comments on the class of kernels satisfying our assumptions} \label{subsec:comments}

First about high dimensions. In dimensions 4 and higher, hard spheres, as well as inverse power-law kernels decaying at least as fast as $d$-dimensional Coulomb interaction, are all covered by Proposition \ref{p:spherical-inequality-from-curvature}. Then we are left with dimensions 2 and 3.
\bigskip

Next about low-dimensional hard spheres. For $d=3$ they are covered both by Proposition \ref{p:spherical-inequality-from-curvature} and by Proposition \ref{p:hard-spheres}. For $d=2$ the hard spheres kernel is $b_{hs}(\cos \theta) = \sin(\theta/2)$ and $\alpha_{hs}(r) = r$. Since inequality~\ref{e:spherical-inequality} is only required for even functions, $\Lambda_b$ is unchanged upon replacement of $b$ by its symmetric version $[b(c)+b(-c)]/2$. But then it is readily seen that
\[ 1 \leq b_{hs}(c) + b_{hs}(-c) \leq \sqrt 2.\]
Thus, we can apply \eqref{e:kernel-comparison} to compare the kernels $b_{hs}$ and $b_{constant}$. For constant kernels in 2D, we have $\Lambda_{constant} \geq 2$, and then the inequality \eqref{e:spherical-inequality} holds for $b_{hs}$ with parameter $\Lambda_{b_{hs}} \geq \sqrt 2$, and indeed $2\sqrt{\Lambda_b} >1$.
\bigskip

Now, the class of kernels $b$ that are written as an integral of a nonnegative weight $\omega(t)$ times the heat kernel on the sphere as in \eqref{e:intro-subordinate} is relatively common in probability theory. These are the jump kernels of Lévy processes constructed from subordination of Brownian motion. For example, the kernel corresponding to the fractional Laplacian $(-\Delta)^s$ on the sphere takes the form \eqref{e:intro-subordinate} with the weight $c_s t^{-1-s}$ (for some $c_s>0$ depending on $s$ and dimension).
\bigskip

Another physically relevant class of collision kernels are those that correspond to a repulsive potential with a power law with exponent $q \in (2,\infty)$ in three dimensions. In this case, the kernel $B$ has the form $B(r,\cos \theta) = b(\cos \theta) r^\gamma$. The exponent is $\gamma = (q-5)/(q-1)$. The angular part $b(\cos \theta)$ does not have an explicit formula. It has a nonintegrable singularity as $\theta\to 0$ of the form $b(\cos \theta) \approx \theta^{-2-2s}$ for $s = 1/(q-1)$, similarly to the kernel of $(-\Delta)^s$. Its precise description can be found in \cite{cercignani-1988} or \cite{villani-survey}. 
When $q=2$, a classical computation  due to Rutherford \cite{rutherford1911lxxix} yields, 
 \[ b (\cos \theta) =  \sin (\theta /2)^{-2(d-1)}.\]
 Remarquably, it coincides in dimension $d=2$ with the kernel of the fractional Laplacian $(-\Delta)^{\frac12}$ restricted to $\pi$-periodic functions (see for instance \cite[Proposition~3]{MR2854312}).
 
In general, the exact formula for the angular kernel $b$ is only defined implicitly in terms of certain complicated integrals.

When $s \in [0,3/4]$ (which corresponds to $q \geq 7/3$), we have $\gamma \in [-2,1]$. In this range, Theorem \ref{t:main} applies since $\Lambda_b \geq 1$ for just any kernel $b$ in three dimensions, and $|\gamma| \leq 2$.

It is not absurd to expect the power-law kernels to belong to the class of subordinate kernels, since it is a smooth function away from $\theta=0$ with a similar singularity as the fractional Laplacian. But we have no clue how to attack such a conjecture. So we gave ourselves a more modest task, and still sufficient for applications to the Boltzmann equation, which is to numerically check that these inverse power law kernels remain close enough to kernels corresponding to a subordinate function. It turns out that such is the case: If $c_1$ and $C_2$ are the optimal constants in \eqref{e:kernel-comparison}, we found empirically, for all values of $q$ in the relevant range, a weight function $\omega$ with a ratio $c_1/C_2$ larger than $0.9$. The value of $\Lambda_b$ can be computed for this weight function $\omega$ using the formula in Proposition \ref{p:main-for-subordinate}. Our computations show convincingly that the assumption of Theorem \ref{t:main} is satisfied by a confortable margin for all power-law kernels in the full range of parameters in 3D and 2D. We review our numerical computations in Appendix~\ref{s:numerical}.

\bigskip

Summarizing, while Theorem \ref{t:main} requires a certain assumption on the collision kernel $B$, this assumption is satisfied by all the common scenarios of physical relevance, including the whole range of 3D inverse power law forces and hard spheres, and also the whole classical range in other dimensions.

\subsection{Global smooth solutions of the Boltzmann equation with very soft potentials}

In \cite{landaufisher2023}, the monotonicity of the Fisher information for the Landau equation is used to prove that there exist global-in-time smooth solutions. In a similar way, we establish here the existence of global smooth solutions for the homogeneous non-cutoff Boltzmann equation with very soft potentials. It is a consequence of the monotonicity of the Fisher information proved in our Theorem~\ref{t:main} and Sobolev embedding, together with existing results in the literature. Thus, the solutions of the space-homogeneous non-cutoff Boltzmann equation, under the assumption of Theorem~\ref{t:main}, cannot blow up even in the very soft potential range.

For simplicity we consider a collision kernel of the form $B(r,\cos \theta) = \alpha(r/2) b(\cos \theta)$. The typical non-cutoff assumptions are that
\begin{equation} \label{e:non-cutoff-standard} 
\alpha(r) = r^\gamma \qquad \text{and} \qquad b(\cos \theta) \approx |\sin\theta|^{-d+1-2s}.
\end{equation}

The existence of global smooth solutions for the space-homogeneous Boltzmann equation with $\gamma+2s \geq 0$ is currently well understood (see for example \cite{he2012}). In the case of very soft potentials (i.e. $\gamma+2s<0$), the possibility of finite time blow up has been a well known open problem for some time. Our monotonicity theorem resolves this problem.

\begin{thm}[Global existence for very soft potentials] \label{t:global_existence}
Let $d=3$ and $B$ be a collision kernel of the form \eqref{e:non-cutoff-standard} with $s \in (0,1)$ and $\gamma \in (-d,0]$. Assume that $B$ satisfies the hypothesis of Theorem \ref{t:main}. There exists an exponent $q$ (large enough) so that if $f_0$ is any initial data in $L^\infty_q(\R^3)$ the Boltzmann equation \eqref{e:boltzmann} has a global classical solution in $[0,\infty) \times \R^3$ with initial data $f_0$.
\end{thm}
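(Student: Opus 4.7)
The plan is to prevent finite-time blow up of classical solutions to the Boltzmann equation with very soft potentials by combining the a priori Fisher information bound of Theorem~\ref{t:main} with the conditional regularity theory for the non-cutoff Boltzmann equation, and then to conclude by an approximation argument analogous to the Landau setting of \cite{landaufisher2023}. Starting from $f_0 \in L^\infty_q(\R^3)$ with $q$ large, I would first regularize: construct a sequence $f_0^n$ of smooth, strictly positive Schwartz data approximating $f_0$, with uniformly bounded Fisher information $i(f_0^n)$ and matching polynomial moments up to order $q$. For each $f_0^n$, short-time existence of a classical solution $f^n \in C^\infty([0,T_n) \times \R^3)$ is standard in the non-cutoff setting.

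The central a priori estimate is a uniform $L^3$ bound. By Theorem~\ref{t:main}, $i(f^n(t,\cdot)) \leq i(f_0^n)$ for all $t \geq 0$, uniformly in $n$. Combined with mass conservation, this bounds $\sqrt{f^n}$ uniformly in $H^1(\R^3)$, and Sobolev embedding in three dimensions gives $\sqrt{f^n} \in L^6(\R^3)$, equivalently a uniform bound on $\|f^n(t,\cdot)\|_{L^3(\R^3)}$ depending only on mass and initial Fisher information. Classical propagation of arbitrary polynomial moments for the non-cutoff Boltzmann equation, valid once $q$ is large enough in the initial data, simultaneously provides weighted $L^p$ variants of these estimates.

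With a uniform-in-time $L^3$ bound, bounded mass, energy, entropy, and polynomial moments, I would invoke the conditional regularity program of Silvestre and Imbert--Silvestre for the non-cutoff Boltzmann equation: such results convert appropriate $L^p$ integrability together with hydrodynamic control into pointwise upper bounds and then full $C^\infty$ smoothness with rapid decay on any compact time interval. Since these a priori estimates are uniform on $[0,T]$ for every finite $T$, blow up is ruled out, the local-in-time existence extends to $T_n = +\infty$, and a standard stability/compactness argument in $n$ extracts the desired global classical solution with initial datum $f_0$.

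The main obstacle is reconciling the integrability gained from the Fisher information, which in three dimensions is precisely $L^3$ at the Sobolev endpoint, with the integrability threshold demanded by the conditional regularity statements across the full very soft range $\gamma \in (-d,0]$, $s \in (0,1)$: near the most singular corner $\gamma + 2s$ close to $-d$, the $L^3$ bound sits at the critical scaling, and bridging to the $L^\infty$ estimates used by the conditional regularity machinery will require a careful bootstrap combining weighted $L^p$ interpolation, moment propagation, and coercivity of the Boltzmann operator on $H^s$-type spaces. Everything else in the scheme is a direct transcription of the corresponding steps from the Landau case \cite{landaufisher2023}.
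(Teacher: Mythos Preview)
Your overall strategy---Fisher information monotonicity $\Rightarrow$ uniform $L^3$ bound $\Rightarrow$ moment interpolation $\Rightarrow$ rule out blow-up---matches the paper's. The ingredients (Sobolev embedding, propagation of moments, conditional regularity) are the right ones, and you correctly flag the delicate point near $\gamma+2s\to -3$ where $L^3$ is borderline.

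However, your regularization step introduces a genuine gap. You require the approximating data $f_0^n$ to have \emph{uniformly} bounded Fisher information, but for $f_0$ merely in $L^\infty_q$ with no smoothness assumed, there is no reason such a sequence exists: smoothing a rough $f_0$ by convolution will in general make $i(f_0^n)\to\infty$. The paper sidesteps this entirely. Instead of regularizing the data, it invokes the short-time existence and \emph{instantaneous smoothing} result of Henderson--Snelson--Tarfulea (Theorem~\ref{t:hst}): the solution launched from $f_0\in L^\infty_q$ is strictly positive and has $D^2 f\in L^\infty([\tau,T],L^\infty_p)$ for any $\tau>0$, whence Lemma~\ref{l:finite-fisher} gives $i(f(\tau,\cdot))<\infty$. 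Theorem~\ref{t:main} is then applied from time $\tau$ onward, not from time zero. No approximation in $n$ is needed, and the compactness/stability step you mention disappears.

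The second place where the paper is more concrete than your sketch is the endgame. Rather than an abstract appeal to the ``conditional regularity program,'' the paper uses the explicit continuation criterion in Theorem~\ref{t:hst}: the solution can only blow up if $\int \langle v\rangle^{k_1} f^{p_1}\,dv\to\infty$ for specific $k_1,p_1$. This reduces everything to verifying a single weighted $L^{p_1}$ bound, which is obtained by interpolating the $L^3$ bound from Fisher against the $L^1_{k_2}$ moment bound from Carlen--Carvalho--Lu (Theorem~\ref{t:ccl}). Your intuition that ``a careful bootstrap combining weighted $L^p$ interpolation, moment propagation'' is needed is exactly right; the paper just makes the target of that bootstrap a single explicit norm rather than a full $C^\infty$ estimate.
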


The restriction to three dimensions, as well as the precise assumption $f_0 \in L^\infty_q(\R^3)$, are the assumptions of the short-time existence results that we reference from the literature. Any advancement in the short-time well posedness theory would immediately be reflected in an enhancement of Theorem \ref{t:global_existence} to a larger class of initial data. While we do not wish to be overoptimistic, everything seems to indicate that present-day tools are enough to work out more general dimensions as well as relaxed conditions, say $f_0 \in L^p_q(\R^d)$ for $p$ large enough.

The amount of smoothness of the solution $f$ depends on the decay of the initial data $f_0$. If the initial data $f_0$ belongs to $L^\infty_q$ for every exponent $q>0$, then the solution $f$ is $C^\infty$. If $f_0 \in L^\infty_q$ for some large exponent $q$, then the solution $f$ will be $C^k$ differentiable for some $k$ depending on $q$. This is simply from the regularity results for the non-cutoff Boltzmann equation with soft potentials that are already available in the literature (see \cite{imbert2020EMS,imbert2022,henderson2022classical}).

We prove Theorem \ref{t:global_existence} in Section \ref{s:global_existence} as a simple consequence of Theorem \ref{t:main} together with some results in the literature.

\subsection{Integro-differential Log-Sobolev inequalities on the sphere}

As explained above, the proof of the monotonicity of the Fisher information for the space-homogeneous Boltzmann equation consists first in reducing the problem to the proof of a functional inequality on the sphere, namely \eqref{e:spherical-inequality} (Theorem~\ref{t:main}), and then checking that this inequality holds true in various scenarios (Subsection \ref{subsec:comments}). To accomplish this program, we shall combine three inequalities: the first one relates the iterated carré du champ $\Gamma^2_{\B,\Delta}$ to the Fisher information (see \eqref{e:DtoFisher}), the second one is of Poincaré-type  (see \eqref{e:H1toNonlocal}) and the third one is an elementary inequality between nonlinear integral quantities (see \eqref{e:squareroots}).

The first of those inequalities, \eqref{e:DtoFisher}. is interesting in itself, and also implies, by a classical semigroup argument \`a la Bakry--\'Emery, the following new inequality (see Definition~\ref{d:bfisher} and Proposition~\ref{p:integral-log-sobolev}):
\begin{equation}\label{ineqDS}   
\iint_{S^{d-1} \times S^{d-1}} (f'-f) \log(f'/f) b(\sigma' \cdot \sigma) \dd \sigma ' \dd \sigma \geq 4 \cOne \left( \int_{S^{d-1}} f \log f \dd \sigma - \left( \int_{S^{d-1}} f \dd \sigma \right) \log \left( \fint_{S^{d-1}} f \dd \sigma \right) \right).
\end{equation}
In the diffusive limit when $b$ concentrates on grazing collisions, inequality~\eqref{ineqDS} reduces to the classical log-Sobolev inequality on the sphere \cite{bakry-2014}. But on the other hand, at the other extreme when $b$ is the constant kernel, \eqref{ineqDS} becomes the spherical version of the entropy-entropy production inequality established for the Boltzmann equation with super-hard spheres by the third author \cite{vill:cer:03}, following a previous joint work with Toscani \cite{TV:entropy:99}. So \eqref{ineqDS} bridges these two families of entropic functional inequalities coming from the study of heat equation on the one hand, and Boltzmann equation on the other hand.

\subsection{Review of literature}

We do not aim at exhaustivity here, and refer to the lecture notes by the third author \cite{villani-crete} for a broader perspective, historical notes and many other references on Fisher information and kinetic theory. Below is a selection of some of the works which are most directly relevant to the present study.

\subsubsection{Fisher information and kinetic theory}

H.~P.~McKean \cite{mckean1966} introduced Fisher information in kinetic theory with a view of understanding the convergence to equilibrium. He proved that the Fisher information is nonincreasing along solutions of the Kac equation, a one-dimensional caricature of the spatially homogeneous Boltzmann equation with Maxwell molecules. He also established the very first result of exponential convergence for an equation of Boltzmann type. A quarter of century later, G.~Toscani \cite{toscani1992} extended McKean's monotonicity result to the case of the space-homogeneous Boltzmann equation with Maxwell molecules in 2D. Then the third author \cite{villani1998boltzmann} extended Toscani's result to any dimension, still for Maxwell molecules. In \cite{villani2000landau}, he provided a direct proof for the space-homogeneous Landau equation, again with Maxwell molecules. Then the subject remained still for nearly another quarter of century, and many experts believed that this monotonicity was a peculiarity of Maxwell molecules. True, there was some numerical evidence indicating that the decrease would hold in more generality, but it was generally considered as non-conclusive. However, recently, N.~Guillen and the second author \cite{landaufisher2023} proved that the Fisher information decreases for the Landau equation for a general family of interaction potentials including the Coulomb potential in 3D. Our technique of reduction to a nonlinear integro-differential inequality on the sphere originates from that work.

\subsubsection{Log-Sobolev inequalities}

The log-Sobolev inequality was first established for the Gaussian measure by A.J.~Stam \cite{stam:59} in the context of information theory, and rediscovered independently by L.~Gross \cite{zbMATH03498656} for the purpose of infinite-dimensional Sobolev analysis.
 Then D.~Bakry and M.~\'Emery proposed a robust proof based on the study of the carré du champ operator $\Gamma$ and the iterated carré du champ $\Gamma^2$ \cite{zbMATH03894218}. Their analysis based on curvature-dimension bounds opened the path to a rich field with a number of ramifications in statistical physics, Riemannian geometry, and more generally the analysis of diffusion processes \cite{bakry-2014, vill:oldnew}.

N.~Guillen and the second author proved  in \cite{landaufisher2023} that the $\Gamma^2$ criterion appearing in \cite[Proposition~5.7.3]{bakry-2014} holds true on the projective plane $\mathbb R\mathbb P^2$ with an explicit constant -- see \eqref{e:be}. This estimate plays a key role in their proof of the monotonicity of the Fisher information for the Landau equation, and there was need to get a good estimate on the constant, applying known results was not sufficient. The equation \eqref{e:spherical-inequality} can be understood as an integro-differential version of this $\Gamma_2$ criterion on $\mathbb R\mathbb P^{d-1}$. The constant obtained in \cite{landaufisher2023} for the three dimensional case was recently improved and extended to any dimension by S.~Ji  \cite{sehyun2024}.

The study of log-Sobolev inequalities in  integro-differential forms is less advanced. The reader is referred first to \cite{zbMATH01562918,cm02} and then to \cite{chafai-2004} and the numerous references therein.

\subsubsection{Existence of global solutions for the space-homogeneous Boltzmann equation}

The study of existence of classical solutions for the space-homogeneous Boltzmann equation in the cutoff case is a classical subject going back to the work of Carleman \cite{carleman}. In the noncutoff case, the most complete result currently in the literature is by L.~He in \cite{he2012} when $\gamma+2s \geq 0$. In the same work, for the case of very soft potentials $\gamma+2s < 0$, the existence of solutions is presented only in a small interval of time. The existence of global strong solutions for very soft potentials (at least in the range $-2<\gamma+2s < 0$ where integral in time weak solutions can be constructed under only finite entropy and moment assumptions), has been a rather well known open problem in the community of kinetic theory for quite some time. The first weak solutions for this regime were constructed by the third author using entropy production estimates \cite{zbMATH01221506} at the end of the nineties; these ``$H$-solutions'' exist for all times and can be used for some qualitative results, see in particular the proof of convergence to equilibrium by Carlen--Carvalho--Lu \cite{carlen2009}. But it is not clear how regular or irregular they are, neither whether a uniqueness result can be obtained. This motivated a number of studies in the past decade. In particular, the second author obtained a conditional $L^\infty$ estimate in \cite{silvestre-2016}. There is an estimate in weighted $L^p$ spaces for the entropy production by J.~Chaker and the second author in \cite{zbMATH07705759}. A partial regularity result was obtained by F.~Golse and the first and second authors  \cite{golse2023partialregularitytimespacehomogeneous}. C.~Henderson, S.~Snelson and A.~Tarfulea established short-time existence and continuation criterion in \cite{henderson2022classical} and \cite{henderson2023decay} that apply also in the space inhomogeneous case. All those results were partial or conditional: we finally solve this quest, at least for all classical physically relevant kernels, in our Theorem \ref{t:global_existence}, thanks to the new Fisher information a priori estimate. As for the strongly related problem of global smoothness for the Landau equation with Coulomb potential, it was addressed similarly by N.~Guillen and the second author in \cite{landaufisher2023}.

\subsection{Organization of the article}

Section~\ref{s:prelim} mostly contains known results that will be used later on. It also contains a description of the type of solutions we consider in this paper, and a proof of the key fact that the operators $\Delta$ and $B$ commute.
In Section~\ref{s:lifting}, we introduce two useful tools from  \cite{landaufisher2023} for the analysis of the time evolution of the Fisher information: a lifting from $\R^d$ to $\R^{2d}$ and polar coordinates. In Section~\ref{s:carré-du-champ}, we introduce (iterated) carré-du-champ operators associated with the Laplacian on the sphere and the operator $\B$ defined in \eqref{e:spherical-operator}. 
Section~\ref{s:generic} is devoted to the proof of Theorem~\ref{t:main}.  In Section~\ref{s:curvature}, we prove that \eqref{e:spherical-inequality} holds true if $d \ge 3$ and $b$ satisfies \eqref{e:generic-b}. In Section~\ref{s:counterexample}, we exhibit a kernel $b$ for which \eqref{e:spherical-inequality} only holds true with $\Lambda_b=0$. We then turn our attention in Section~\ref{s:subordinate} towards kernels that are obtained by integrating the heat kernel against a weight and prove that \eqref{e:spherical-inequality} holds true with $\Lambda \geq d$ in that case. In the final Section~\ref{s:global_existence}, we explain how to get global existence of solutions for the space-homogeneous Boltzmann equation with very soft potentials. 
\bigskip

\paragraph{\bf Notation.}
The unit sphere in $\R^d$ is denoted by $S^{d-1}$. The scalar product in $\R^d$ between two vectors $x$ and $y$ is denoted by $x \cdot y$. For $v \in \R^3$, $\langle v \rangle$ denotes
$\sqrt{1+|v|^2}$. The norm of $f$ in $L^p_q (\R^d)$ is the same as the norm of $\langle v \rangle^q f(v)$ in $L^p(\R^d)$.

Functions $f,g \dots$ are defined on $\R^d$ while $F,G \dots$ are defined in $\R^{2d}$.
The Laplace-Beltrami operator on the sphere $S^{d-1}$ is denoted by $\Delta$. 
The operator $\B$ is defined in \eqref{e:spherical-operator}.
Carré du champ operators are denoted by $\Gamma_\B$ and $\Gamma_\Delta$ and the corresponding iterated one by $\Gamma^2_{\B,\Delta}$, see Section~\ref{s:carré-du-champ}. 
The finite dimensional linear operators $|\cdot|_{\sigma',\sigma}$, $P_{\sigma,\sigma'}$ and $M_{\sigma,\sigma'}$ are defined in \eqref{e:nonlocalconnection}, \eqref{e:def-P}  and \eqref{e:def-M}, respectively.

\section{Preliminaries}
\label{s:prelim}

\subsection{Short time existence and continuation criterion}

The best short-time existence result and continuation criterion currently available for the Boltzmann equation in the very soft potential range is due to Chris Henderson, Stanley Snelson, and Andrei Tarfulea in \cite{henderson2022classical} and \cite{henderson2023decay}. In the context of the homogeneous Boltzmann equation with very soft potentials, their results can be summarized in the following theorem.
Its statement involves the following norm,
\[ \|f \|_{L^\infty_q (\R^3)} = \| \langle v \rangle^q  f \|_{L^\infty (\R^3)}.\]
\begin{thm}[Short-time existence and continuation criterion -- \cite{henderson2022classical,henderson2023decay}] \label{t:hst}
  Assume $d=3$ and that the collision kernel $B$ has the form \eqref{e:non-cutoff-standard}. 
  If $f_0 \in L^\infty_q(\R^3)$ for some $q> 3+2s$, then there exists a solution $f$ to the Boltzmann equation \eqref{e:boltzmann} in $[0,T] \times \R^3$ for some $T>0$.

  This solution $f$ is strictly positive, and for any exponents $k,p \geq 0$, there exists $q = q(k,p)$ so that if $f_0 \in L^\infty_q (\R^3)$, then for any $\tau>0$,
\[ D^k f \in L^\infty([\tau,T],L^\infty_p(\R^3)).\]

Moreover, if such a solution exists for all $T < T_{\max}$, but it does not exist for any $T > T_{max}$, then necessarily
\[ \lim_{t \to T_{max}} \int_{\R^3} \langle v \rangle^{k_1} f^{p_1} \dd v = +\infty \]
Here, $k_1 = 1 - 3(\gamma+2s)/2s$ and $p_1$ is any exponent such that $p_1 > 3/(3+\gamma+2s)$.
\end{thm}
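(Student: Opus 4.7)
Since Theorem~\ref{t:hst} is stated as a direct quotation from \cite{henderson2022classical,henderson2023decay}, my ``proof'' would mainly consist in locating the corresponding statements in those two papers and verifying that our setting (space-homogeneous, $d=3$, kernel of the form \eqref{e:non-cutoff-standard}) falls within their hypotheses. Nevertheless, I outline here the main structural ideas that underlie their arguments, since the statement packages together three qualitatively different claims: short-time existence, regularization in positive time, and the quantitative continuation criterion.

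For the short-time existence part, the natural plan is to construct solutions via an approximation scheme in which the singular angular kernel is replaced by a family of truncated kernels $b_\eps$, for which the classical theory of the cutoff homogeneous Boltzmann equation already provides existence of smooth solutions. One then produces a priori estimates uniform in $\eps$ for $\|f_\eps\|_{L^\infty_q}$, as well as a pointwise lower bound ensuring strict positivity (the latter following from a maximum-principle argument exploiting the nonlocal diffusion built into the non-cutoff collision operator). These bounds are enough to pass to the limit and obtain a classical solution on a time interval $[0,T]$ whose length depends only on $\|f_0\|_{L^\infty_q}$ and the structural constants of $B$.

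For the instantaneous smoothing statement that $D^k f \in L^\infty([\tau,T], L^\infty_p(\R^3))$ for arbitrary $\tau > 0$, the idea is to exploit the diffusive character of the non-cutoff collision operator: for each fixed $f$, the operator $q(f)$ behaves like a nonlocal fractional diffusion of order $2s$ in the velocity variable plus lower order terms. Schauder-type estimates for kinetic integro-differential equations, developed in \cite{imbert2020EMS,imbert2022}, then upgrade $L^\infty$ bounds to Hölder regularity, and a bootstrap argument yields $C^k$ regularity for as many derivatives as the polynomial decay of $f_0$ allows; moment propagation in the soft-potential regime relies on Povzner-type inequalities adapted to $\gamma < 0$.

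The main obstacle, and the most delicate part, is the continuation criterion with its explicit exponents $k_1$ and $p_1$. The strategy is to show that an $L^\infty$ bound on $f$ follows from control of the weighted $L^{p_1}$ norm $\int \langle v\rangle^{k_1} f^{p_1} \dd v$ by a De Giorgi or Moser iteration tailored to the scaling of the nonlocal operator when $\gamma+2s<0$; the specific values of $k_1$ and $p_1$ arise from balancing the hypoelliptic scaling with the Schwartz-type inequalities used to absorb the weights. Once this $L^\infty$ bound is in hand, the smoothing step above lets one restart the short-time existence machinery at any time strictly less than $T_{\max}$, so the only way the solution can fail to continue past $T_{\max}$ is if the weighted $L^{p_1}$ norm itself blows up, which is precisely the stated criterion.
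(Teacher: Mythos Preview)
Your reading is correct: the paper does not prove Theorem~\ref{t:hst} at all. It is quoted as a black box from \cite{henderson2022classical,henderson2023decay}, followed only by a remark that those references in fact handle the harder space-inhomogeneous case. Your proposal recognizes this and then goes further, sketching the architecture of the arguments in those references (approximation by cutoff kernels, uniform $L^\infty_q$ bounds, lower bounds via maximum principle, kinetic Schauder estimates for smoothing, and a De Giorgi--Moser iteration for the continuation criterion). That sketch is reasonable and broadly faithful to how those papers proceed, though of course the actual work lives in the cited references and not here.
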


\begin{remark}
The results in \cite{henderson2022classical,henderson2023decay} apply to space-inhomogeneous equations as well. Much of the difficulty of these papers is due to this. There is an assumption in the main result of these papers about a lower bound for $f_0$ in certain balls that is only relevant in the space inhomogeneous setting.
\end{remark}

\subsection{Smooth decaying solutions}

We recall that the Fisher information is finite for smooth and decaying functions. The result is stated in terms of the following weight Sobolev norms: for an integer $k \ge 1$ and a positive exponent $q>0$,
\begin{align*}
  \|f \|_{H^k_q}^2 & = \sum_{|\alpha| \le k} \| \langle v \rangle^{q} D^\alpha f\|_{L^2}^2, \\
  \|f \|_{\dot H^k_q}^2 & = \sum_{|\alpha| = k} \| \langle v \rangle^{q} D^\alpha f\|_{L^2}^2.
\end{align*}
\begin{lemma}[Smooth decaying functions have finite Fisher information -- \cite{zbMATH01486430}] \label{l:finite-fisher}
  For all $\eps >0$, there exists $C_\eps >0$ (only depending on dimension and $\eps$) such that,
  \[ I (f) \le C_\eps  \left( \|f \|_{\dot H^2_{(d/2)+\eps}} + \|f \|_{\dot H^1_{(d/2)-1+\eps}} + \|f \|_{L^2_{(d/2)-2+\eps}} \right) \le C_\eps \| f\|_{H^2_{(d/2)+\eps}}.\]
\end{lemma}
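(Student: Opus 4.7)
The plan rests on a pointwise Taylor inequality combined with weighted Cauchy--Schwarz. For any smooth nonnegative $f$ and any $v$ with $\nabla f(v) \neq 0$, the one-dimensional function $h(s) := f(v - s\nabla f(v)/|\nabla f(v)|)$ satisfies $h(0) = f(v)$, $h'(0) = -|\nabla f(v)|$, and $h \geq 0$ everywhere. Comparing $h$ with its Taylor upper bound $f(v) - |\nabla f(v)| s + \tfrac12 M_R(v) s^2$ on $s \in [0,R]$, where $M_R(v) := \sup_{B_R(v)} |D^2 f|$, and using $h \geq 0$ at an appropriate point, one obtains a dichotomy: either the parabola's minimum $s^* = |\nabla f(v)|/M_R(v)$ lies in $[0,R]$, giving $|\nabla f(v)|^2 \leq 2 f(v) M_R(v)$, or $s^* > R$, in which case nonnegativity at $s = R$ forces $|\nabla f(v)| \leq 2 f(v)/R$. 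Taking $R = \langle v \rangle$ and combining the two regimes yields the pointwise estimate
\[
\frac{|\nabla f(v)|^2}{f(v)} \lesssim M_{\langle v \rangle}(v) + \frac{|\nabla f(v)|}{\langle v \rangle} + \frac{f(v)}{\langle v \rangle^2}.
\]

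Integrating and applying Cauchy--Schwarz to each term on the right against the weight $\langle v \rangle^{-(d/2)-\eps}$, which belongs to $L^2(\R^d)$ precisely because $\eps > 0$, the factors $\langle v \rangle^{-1}$ and $\langle v \rangle^{-2}$ shift the exponent $(d/2)+\eps$ down to $(d/2)-1+\eps$ and $(d/2)-2+\eps$ respectively, producing the norms $\|f\|_{\dot H^1_{(d/2)-1+\eps}}$ and $\|f\|_{L^2_{(d/2)-2+\eps}}$. The weighted $L^2$ norm of the local maximal function $M_{\langle v \rangle}$ is controlled by $\|f\|_{\dot H^2_{(d/2)+\eps}}$ using boundedness of the Hardy--Littlewood maximal operator on weighted $L^2$ (power weights $\langle v \rangle^q$ are of Muckenhoupt $A_2$ type), after a dyadic annular decomposition of $\R^d$ that manages the dependence of the radius on $v$. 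The second inequality $\|f\|_{\dot H^2_q} + \|f\|_{\dot H^1_{q-1}} + \|f\|_{L^2_{q-2}} \leq \|f\|_{H^2_q}$ with $q := (d/2)+\eps$ follows immediately from $\langle v \rangle \geq 1$.

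The main technical point is the balance forced by the variable radius: too small an $R$ localizes the maximal function too tightly to be controlled in weighted $L^2$, while too large an $R$ gives insufficient decay of the error $f/R^2$. The choice $R \sim \langle v \rangle$ is essentially forced by the requirement that $\langle v \rangle^{-(d/2)-\eps}$ lie in $L^2(\R^d)$, which is exactly the weight needed for integrability at infinity; the appearance of three distinct weighted norms in the statement is a direct reflection of the three kinds of contribution ($M_R$, $|\nabla f|/R$, $f/R^2$) produced by the Taylor dichotomy.
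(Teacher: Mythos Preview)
The paper does not prove this lemma; it is quoted from the literature (the citation in the statement) and used as a black box. So there is no in-paper argument to compare against, and the question is simply whether your sketch stands on its own.

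Your Taylor dichotomy is correct and standard: for nonnegative $f$, with $M_R(v) := \sup_{B_R(v)} |D^2 f|$, one indeed gets either $|\nabla f(v)|^2 \leq 2 f(v) M_R(v)$ or $|\nabla f(v)| \leq 2 f(v)/R$. The weighted Cauchy--Schwarz treatment of the two ``soft'' terms $|\nabla f|/\langle v\rangle$ and $f/\langle v\rangle^2$ is also fine and does produce exactly the weighted norms $\|f\|_{\dot H^1_{(d/2)-1+\eps}}$ and $\|f\|_{L^2_{(d/2)-2+\eps}}$.

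The gap is in the treatment of $M_R$. You invoke ``boundedness of the Hardy--Littlewood maximal operator on weighted $L^2$,'' but $M_R(v)=\sup_{B_R(v)}|D^2 f|$ is a \emph{pointwise supremum}, not an average; it is not the Hardy--Littlewood maximal function, and there is no $L^2\to L^2$ bound for it. Concretely, if $g$ is an $L^2$-normalized bump concentrated near a point, then $\sup_{B_1(\cdot)}|g|$ is of size $\|g\|_{L^\infty}$ on a set of unit measure, so $\|\sup_{B_1(\cdot)}g\|_{L^2}$ can be made arbitrarily large while $\|g\|_{L^2}=1$. No dyadic annular decomposition repairs this: on each annulus you still face a genuine sup, and with $R=\langle v\rangle$ the balls even reach back to the origin. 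To salvage the scheme you would need to replace the Lagrange remainder by the integral remainder $\int_0^s (s-t)\, e\cdot D^2 f(v-te)\, e\, dt$, so that what appears is an \emph{average} of $|D^2 f|$ along a segment; this can then be tied to a bona fide maximal function (or handled by direct integration in $(v,t)$ after a change of variables). That is a genuinely different and more delicate step than what you wrote, and without it the bound $\|\langle v\rangle^{(d/2)+\eps} M_{\langle v\rangle}\|_{L^2} \lesssim \|f\|_{\dot H^2_{(d/2)+\eps}}$ is unjustified.
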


Theorem~\ref{t:hst} ensures that there exists a strictly positive classical solution $f$ of \eqref{e:boltzmann} at least in a short interval of time $[0,T]$. Assuming that $f_0$ has an appropriate decay, the solution satisfies $\langle v \rangle^q f(t,v) \in L^\infty$, for some large exponent $q$. Moreover, for any $\tau > 0$, 
\[ D^2 f \in L^\infty([\tau,T],L^\infty_q(\R^3)).\]
Lemma \ref{l:finite-fisher} above ensures that these solutions have a finite Fisher information for any $t>0$. This is the only notion of solution we need to consider in this paper. In Theorem \ref{t:global_existence} we prove that these solutions can be extended globally in time without singularities. Thus, weak solutions are no longer necessary in this context.

\subsection{Commutation of $\Delta$ and $\B$}

\begin{lemma}[The operators $\B$ and $\Delta$ commute] \label{l:BandDelta_commute}
For any smooth function $f:S^{d-1}\to \R$, $\Delta \B f = \B \Delta f$. 
\end{lemma}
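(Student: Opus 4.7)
The plan is to exploit that both $\Delta$ and $\B$ are $SO(d)$-equivariant operators on $S^{d-1}$, hence simultaneously diagonalized in the spherical harmonics decomposition.

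First I verify that $\B$ commutes with every rotation $R\in SO(d)$: substituting $\tau = R^{-1}\sigma'$ and using $\sigma\cdot R\tau = R^{-1}\sigma\cdot\tau$ yields $\B(f\circ R^{-1})(\sigma) = (\B f)(R^{-1}\sigma)$. The Laplace--Beltrami operator $\Delta$ shares this property, being defined intrinsically from the round metric. I then invoke the irreducible decomposition $L^2(S^{d-1}) = \bigoplus_{\ell\geq 0}\mathcal H_\ell$ into spherical harmonics, on which $SO(d)$ acts irreducibly. Schur's lemma---equivalently, the Funk--Hecke formula---ensures that any $SO(d)$-equivariant operator acts as a scalar on each $\mathcal H_\ell$. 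Therefore $\Delta|_{\mathcal H_\ell}=-\ell(\ell+d-2)\,\mathrm{Id}$ and $\B|_{\mathcal H_\ell}=\lambda_\ell^b\,\mathrm{Id}$, so the two operators commute on each component. By continuity in the $C^\infty$ topology and density of $\bigoplus_\ell \mathcal H_\ell$ among smooth functions, the identity $\Delta\B f = \B\Delta f$ extends to all of $C^\infty(S^{d-1})$.

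The main technical point is to verify that $\B Y$ is a well-defined element of $\mathcal H_\ell$ (i.e.\ that $\lambda_\ell^b$ is finite) when $b$ only satisfies the borderline condition \eqref{e:generic-b}. I would exploit the reflection $R_\sigma:\sigma' \mapsto 2(\sigma\cdot\sigma')\sigma - \sigma'$, an isometry of $S^{d-1}$ that fixes the value of $\sigma\cdot\sigma'$ (and hence of $b(\sigma\cdot\sigma')$), to symmetrize:
\[ \B Y(\sigma) = \frac{1}{2}\int_{S^{d-1}} \bigl(Y(\sigma')+Y(R_\sigma\sigma')-2Y(\sigma)\bigr)\,b(\sigma\cdot\sigma') \dd \sigma'. \]
A Taylor expansion of smooth $Y$ around $\sigma$ shows this symmetrized increment is $O(|\sigma-\sigma'|^2)$, which near the diagonal is comparable to $1-(\sigma\cdot\sigma')^2$; hence the integrand is integrable by \eqref{e:generic-b}. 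An alternative, perhaps more elementary, route bypasses spherical harmonics entirely: differentiate the rotation-equivariance of $\B$ along a one-parameter subgroup $e^{tX}\subset SO(d)$ to get $\B(Xf)=X(\B f)$ for every Killing field $X$, then use $\Delta=\sum_{i<j}X_{ij}^2$ on a standard basis of Killing fields to conclude $\Delta\B f=\B\Delta f$ by iterating the commutation.
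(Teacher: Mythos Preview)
Your argument is correct, and both routes you sketch are sound. The paper, however, proceeds by a direct computation: it differentiates under the integral sign to write
\[
\Delta \B f(\sigma) = \int_{S^{d-1}} \Bigl( -\Delta f(\sigma)\, b(\sigma'\cdot\sigma) + (f(\sigma')-f(\sigma))\,\Delta_\sigma[b(\sigma'\cdot\sigma)] \Bigr) \dd\sigma',
\]
computes $\Delta_\sigma[b(\sigma'\cdot\sigma)]$ explicitly via the radial-angular decomposition of the Euclidean Laplacian, observes that the resulting expression $(1-(\sigma'\cdot\sigma)^2)b''(\sigma'\cdot\sigma) - (d-1)(\sigma'\cdot\sigma)b'(\sigma'\cdot\sigma)$ is symmetric in $\sigma,\sigma'$, and then integrates by parts in $\sigma'$ to recover $\B\Delta f$.

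Your first route (Schur/Funk--Hecke on the spherical-harmonic decomposition) is more conceptual and in fact anticipates Lemma~\ref{l:eigenspaces}, where the paper later proves that each eigenspace of $\Delta$ lies in an eigenspace of $\B$; you are essentially observing that this already yields commutation. Your second route (commute $\B$ with each Killing field $X_{ij}$, then use $\Delta=\sum X_{ij}^2$) is closest in spirit to the vector-field machinery of Appendix~\ref{s:vector-fields}, and is arguably the cleanest: it needs no explicit formula for $\Delta_\sigma b$ and no representation theory. The paper's computational proof, by contrast, is entirely self-contained and produces the explicit expression for $\Delta_\sigma[b(\sigma'\cdot\sigma)]$ as a by-product; its only cost is that one must redo the calculation if the underlying geometry changes, whereas your arguments transfer verbatim to any isotropy-invariant kernel on a homogeneous space.
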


\begin{proof}
Let us assume without loss of generality that both $f$ and $b$ are smooth. The general case follows by a standard approximation argument.

We compute $\Delta \B f$ by differentiating inside the integral.
\begin{align*}
\Delta \B f &= \Delta \int_{S^{d-1}} (f(\sigma')-f(\sigma)) b(\sigma'\cdot\sigma) \dd \sigma' \\
&= \int_{S^{d-1}} -\Delta f(\sigma) b(\sigma'\cdot\sigma) + (f(\sigma')-f(\sigma)) \Delta_\sigma[b(\sigma'\cdot\sigma)] \dd \sigma'.
\end{align*}
A practical way to compute the spherical Laplacian of a function is to extend the function to $\R^d$ and use the formula for the Laplacian in polar coordinates. With this in mind, we write $u(x) = b(\sigma'\cdot x)$ for $x = r\sigma$ and compute
\begin{align*}
  \Delta_x [b(\sigma'\cdot x)] &= \partial_{rr} [b(\sigma'\cdot x)] + \frac{d-1}r \partial_r [b(\sigma'\cdot x)] + \frac 1 {r^2} \Delta_\sigma [b(\sigma'\cdot x)]. \\
  \intertext{We can compute the Laplacian in Euclidian coordinates in the left hand side and partial derivatives w.r.t. $r$ in the right hand side and get,}
b''(\sigma' \cdot x) &= b''(\sigma' \cdot x) (\sigma' \cdot \sigma)^2 + \frac{d-1}r (\sigma' \cdot \sigma) b'(\sigma'\cdot x) + \frac 1 {r^2} \Delta_\sigma [b(\sigma'\cdot x)].
\end{align*}
Evaluating at $r=1$, we obtain,
\begin{align*}
\Delta_\sigma [b(\sigma'\cdot \sigma)] &= \left( 1 - (\sigma' \cdot \sigma)^2 \right) b''(\sigma' \cdot \sigma) - (d-1) (\sigma' \cdot \sigma) b'(\sigma' \cdot \sigma). 
\end{align*}
The only thing that matters for this proof is that $\sigma$ and $\sigma'$ are exchangeable in the formula above. It means that $\Delta_\sigma [b(\sigma'\cdot \sigma)] = \Delta_{\sigma'} [b(\sigma'\cdot \sigma)]$. Introducing this fact in our computation above for $\Delta \B f$, and then integrating by parts, we get
\begin{align*}
\Delta \B f &= \int_{S^{d-1}} -\Delta f(\sigma) b(\sigma'\cdot\sigma) + (f(\sigma')-f(\sigma)) \Delta_{\sigma'}[b(\sigma'\cdot\sigma)] \dd \sigma' \\
&= \int_{S^{d-1}} -\Delta f(\sigma) b(\sigma'\cdot\sigma) +  \Delta f(\sigma') b(\sigma'\cdot\sigma) \dd \sigma' \\
&= \B \Delta f. \qedhere
\end{align*}
\end{proof}

\subsection{Comparison with reference kernels}

We conclude this preliminary section with the precise statement of a remark contained in the introduction.

\begin{prop}[Comparison with reference kernels]\label{p:perturbation}
Assume that $b_0$ is a reference kernel for which \eqref{e:spherical-inequality} holds with parameter $\Lambda_0$. Let $b$ be another kernel such that \eqref{e:kernel-comparison} holds true. 
Then \eqref{e:spherical-inequality} holds for $b$ with $\Lambda_b = c_1 \Lambda_0/C_2.$
\end{prop}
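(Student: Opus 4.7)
The plan is a simple two-step sandwich based on (i) a symmetrization reduction and (ii) linearity plus monotonicity of both sides of \eqref{e:spherical-inequality} in the kernel.

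First, since \eqref{e:spherical-inequality} is tested only on even functions $f(-\sigma)=f(\sigma)$, I would show that both sides depend on the kernel only through its even-symmetric part $\tilde b(c):=\tfrac12(b(c)+b(-c))$. For the right-hand side, the change of variable $\sigma'\mapsto -\sigma'$ leaves $(f(\sigma')-f(\sigma))^2/(f(\sigma')+f(\sigma))$ unchanged by evenness and turns $b(\sigma'\cdot\sigma)$ into $b(-\sigma'\cdot\sigma)$; averaging with the original integral produces $\tilde b$. For the left-hand side, the same involution shows that $\B$ restricted to even functions is unchanged upon replacing $b$ with $\tilde b$, and therefore so are the carré du champ $\Gamma_{\B}$ and the iterated $\Gamma^2_{\B,\Delta}$. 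The bilateral comparison \eqref{e:kernel-comparison} then reads $c_1\,\tilde b_0\le \tilde b\le C_2\,\tilde b_0$.

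Next, with an even $f>0$ fixed once and for all, consider the two functionals
\[ L(b):=\int_{S^{d-1}}\Gamma^2_{\B,\Delta}(\log f,\log f)\,f\,\dd\sigma,\qquad R(b):=\iint_{S^{d-1}\times S^{d-1}}\frac{(f(\sigma')-f(\sigma))^2}{f(\sigma')+f(\sigma)}\,b(\sigma'\cdot\sigma)\,\dd\sigma'\,\dd\sigma. \]
Both are linear in $b$: for $R$ this is immediate, for $L$ it is because $\B$ is linear in $b$ and appears exactly once in the definition of $\Gamma^2_{\B,\Delta}$ (through $\Gamma_{\B}$ in Lemmas \ref{l:Gamma2-with-T} and \ref{l:Gamma2-with-vector-fields}). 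Moreover both are monotone under pointwise increase of the nonnegative kernel. For $R$ this is trivial from the nonnegativity of the integrand; for $L$ one needs that the formulas of Section \ref{s:carré-du-champ} can be rearranged into a representation
\[ L(b)=\iint_{S^{d-1}\times S^{d-1}}\Phi(\sigma,\sigma',f)\,b(\sigma'\cdot\sigma)\,\dd\sigma'\,\dd\sigma \]
with a pointwise nonnegative integrand $\Phi\ge 0$. This is essentially the same structural positivity that underlies the universal lower bound $\Lambda_b\ge d-2$ in Proposition~\ref{p:spherical-inequality-from-curvature}, and it is the only genuinely nontrivial point to check here.

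Granting the two monotonicities, the proposition follows by the sandwich
\[ L(b)\;=\;L(\tilde b)\;\ge\;c_1\, L(\tilde b_0)\;=\;c_1\, L(b_0)\;\ge\;c_1\Lambda_0\, R(b_0)\;=\;c_1\Lambda_0\, R(\tilde b_0)\;\ge\;\frac{c_1\Lambda_0}{C_2}\,R(\tilde b)\;=\;\frac{c_1\Lambda_0}{C_2}\,R(b), \]
where the first and last equalities use the symmetrization step, the second and sixth inequalities use monotonicity of $L$ and $R$ together with the comparison $c_1\tilde b_0\le\tilde b\le C_2\tilde b_0$, and the middle inequality is the hypothesis applied to $b_0$. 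This is exactly \eqref{e:spherical-inequality} for $b$ with constant $c_1\Lambda_0/C_2$. The main obstacle is the nonnegative-integrand representation of $L(b)$; everything else is an elementary chain of inequalities.
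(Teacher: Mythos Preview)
Your proposal is correct and follows essentially the same approach as the paper. The paper's proof is a terse two-liner invoking Lemma~\ref{l:Gamma2-with-T} to get $\Gamma^2_{\B,\Delta}\ge c_1\,\Gamma^2_{\B_0,\Delta}$ pointwise, then says ``it is easy to conclude''; your version unpacks exactly this into the symmetrization step plus the monotone sandwich $L(b)\ge c_1 L(b_0)\ge c_1\Lambda_0 R(b_0)\ge (c_1\Lambda_0/C_2)R(b)$. The ``main obstacle'' you flag---a nonnegative-integrand representation of $L(b)$---is not a gap: it is precisely the content of Lemma~\ref{l:Gamma2-with-T} (or equivalently Lemma~\ref{l:Gamma2-with-vector-fields}), which writes $\Gamma^2_{\B,\Delta}(g,g)(\sigma)=\tfrac12\int |\nabla g(\sigma')-\nabla g(\sigma)|_{\sigma',\sigma}^2\,b(\sigma'\cdot\sigma)\,\dd\sigma'$ with a manifestly nonnegative integrand, so you may drop the hedging. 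Your explicit treatment of the symmetrization (via $\sigma'\mapsto-\sigma'$ and evenness of $f$) is a useful clarification that the paper leaves implicit.
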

\begin{proof}
  Let $\B_0$ denote the linear integro-differential operator associated with the reference kernel $b_0$.
  Thanks to Lemma~\ref{l:Gamma2-with-T}, we know  that $\Gamma^2_{\B,\Delta} \ge c_1 \Gamma^2_{\B_0,\Delta}$. It is now easy to conclude.
\end{proof}

\section{Lifting and polar coordinates}
\label{s:lifting}

\subsection{Lifting the problem}

Following \cite{landaufisher2023}, we lift the problem to the double variables $(v,w) \in \R^{d+d}$. For a function $F : \R^{2d} \to [0,\infty)$, we set
\[ I(F) := \iint_{\R^{2d}} \frac{|\nabla F|^2} F \dd w \dd v.\]
For a function $F \colon \R^{2d} \to \R$, we can consider the marginals with respect to the two variables $v$ and $w$.
If $F(v,w) = F(w,v)$, these marginals coincide. We write $\pi F$ to denote this marginal
\[ \pi F (v) = \int_{\R^d} F(v,w) \dd w.\]
A classical property of the Fisher information is that it is larger than the sum of the Fisher information of marginals.
If $F$ is symmetric, this means that
\begin{equation}
  \label{e:fisher-margin}
  I (F) \ge 2 i (\pi F).
\end{equation}

We also define $Q(F)$ so that $q(f) = \pi Q(f \otimes f)$,
\begin{equation}  \label{e:Q}
Q(F) := \int_{\sigma' \in \R^{d-1}} (F(v',w') - F(v,w)) B(|v-w|,\cos \theta ) \dd \sigma'.
\end{equation}
Note that $Q(F)$ is a linear elliptic integro-differential operator whose kernel is supported on $(d-1)$-dimensional spheres.

We write the integral in $Q$ with respect to the variable $\sigma' \in S^{d-1}$ because we will soon write a change of variables $v = z + r \sigma$ and $w = z - r \sigma$. We want to differentiate between the $\sigma \in S^{d-1}$ from $(v,w)$ after the change of variables, and the $\sigma'$ of the integral in the expression for $Q$ so that $v' = z + r \sigma'$ and $w' = z - r \sigma'$.

Because of \eqref{e:fisher-margin}, it is explained in \cite[Lemma~3.4 and Remark 3.5]{landaufisher2023}  that we have for $F = f \otimes f$,
\[ \langle i'(f),q(f) \rangle = \frac 12 \langle I'(F), Q(F) \rangle.\]

In order to prove Theorem \ref{t:main}, it is thus sufficient to prove that the following inequality holds for every smooth function $F \colon \R^{2d} \to [0,\infty)$ that is symmetric $F(v,w) = F(w,v)$,
\begin{equation} \label{e:aim}
\langle I'(F), Q(F) \rangle \leq 0.
\end{equation}

\subsection{Polar coordinates}

It is useful to consider the following change of variables,
\begin{align*}
v = z + r \sigma, \\
w = z - r \sigma.
\end{align*}
Here $z \in \R^d$, $\sigma \in S^{d-1}$ and $r \in [0,\infty)$.

We can write the Fisher information with respect to these variables,
\begin{eqnarray}\nonumber I(F) &=& \iiint \left( \frac{|\nabla_z F|^2} F + \frac{|\partial_r F|^2} F + \frac{|\nabla_\sigma F|^2} {r^2 F} \right) r^{d-1} \dd \sigma \dd r \dd z, \\
\label{e:fisher-split} &=:& I_{parallel} + I_{radial} + I_{spherical}.
\end{eqnarray}
For the sake of clarity, we abuse notation from now on by writing $F$ both for the function defined in $\R^{2d}$ and the one defined in $\R^d \times (0,+\infty) \times S^{d-1}$ in terms of polar coordinates.

We also write $Q(F)$ in terms of the new variables and realize that it takes a very simple form. In fact, it only acts on the spherical variable $\sigma$,
\[ Q(F) = \alpha(r) \int_{\sigma' \in S^{d-1}} (F(z,r,\sigma') - F(z,r,\sigma)) b(\sigma' \cdot \sigma) \dd \sigma'. \]

The operator $Q$ consists in applying the spherical diffusion operator $\B$ to the function $F(z,r,\cdot)$, for every fixed value of $z \in \R^d$ and $r>0$, and then multiplying it by the scalar function $\alpha(r)$.

To prove Theorem \ref{t:main}, we differentiate each term $I_{parallel}$, $I_{radial}$ and $I_{spherical}$ in the direction of $Q(F)$.

\begin{lemma}[Differentiating $I_{parallel}$] \label{l:Iparallel}
  The following identity holds,
\[ \langle I'_{parallel}(F), Q(F) \rangle = \frac 12 \iiiint -\alpha \left| \nabla_z \log F' - \nabla_z \log F \right|^2 (F'+F) \ b(\sigma'\cdot\sigma) r^{d-1} \dd \sigma' \dd \sigma \dd r \dd z.\]
Here, we write $F = F(z,r,\sigma)$ and $F'=F(z,r,\sigma')$.
\end{lemma}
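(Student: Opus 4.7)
The idea is to view $I_{parallel}$ as a quadratic functional, differentiate it, substitute $G=Q(F)$, and use the symmetry of the operator $\B$ to reorganize everything into a squared-difference form that is manifestly nonpositive. Concretely, rewriting
\[
I_{parallel}(F) = \iiint F\,|\nabla_z \log F|^2 \, r^{d-1}\, d\sigma\, dr\, dz
\]
and perturbing $F \mapsto F + \eps G$, the identity $F\,\nabla_z(G/F) = \nabla_z G - G\,\nabla_z\log F$ gives the functional derivative
\[
\langle I'_{parallel}(F), G\rangle = \iiint \bigl(-|\nabla_z\log F|^2\, G + 2\,\nabla_z\log F \cdot \nabla_z G\bigr)\, r^{d-1}\, d\sigma\, dr\, dz.
\]

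The second step is to set $G=Q(F)$ and symmetrize. Because $Q$ only acts in $\sigma$, one has $\nabla_z Q(F)=Q(\nabla_z F)$. Using the symmetry $b(\sigma'\cdot\sigma)=b(\sigma\cdot\sigma')$, the change of variables $\sigma\leftrightarrow\sigma'$ yields, for arbitrary (scalar, or componentwise vector) $\psi,\eta$ on $S^{d-1}$,
\[
\int_{S^{d-1}} \psi\, Q(\eta)\, d\sigma = -\frac{\alpha(r)}{2} \iint (\psi'-\psi)(\eta'-\eta)\,b(\sigma'\cdot\sigma)\, d\sigma'\, d\sigma.
\]
Applying this with $\psi=|\nabla_z\log F|^2,\ \eta=F$ for the first term, and componentwise with $\psi=\nabla_z\log F,\ \eta=\nabla_z F$ for the second, and abbreviating $a=\nabla_z\log F$, $a'=\nabla_z\log F'$ (so that $\nabla_z F = Fa$), one arrives at
\[
\langle I'_{parallel}(F), Q(F)\rangle = \tfrac12\iiiint \alpha\,\Bigl[(|a'|^2-|a|^2)(F'-F) - 2(a'-a)\cdot(F'a'-Fa)\Bigr] b\, r^{d-1}\, d\sigma'\, d\sigma\, dr\, dz.
\]

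The third and final step is a pointwise algebraic identity: expanding the four quadratic cross-terms one checks that
\[
(|a'|^2-|a|^2)(F'-F) - 2(a'-a)\cdot(F'a'-Fa) = -(F'+F)\,|a'-a|^2,
\]
which immediately gives the stated formula. The only genuine obstacle is bookkeeping in this last step and in the sign conventions of the symmetrization; there is no analytic subtlety, since the smoothness and decay of the class of solutions considered in Section~\ref{s:prelim}, together with the compactness of $S^{d-1}$, make all integrations by parts in $z$ and exchanges of integrals trivially justified.
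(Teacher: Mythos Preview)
Your proof is correct and follows essentially the same route as the paper: compute the G\^ateaux derivative of $I_{parallel}$, symmetrize in $\sigma,\sigma'$ using the evenness of $b(\sigma\cdot\sigma')$, and collapse the resulting expression into the square $-(F'+F)\,|a'-a|^2$. The only cosmetic difference is that you isolate the symmetrization identity $\int \psi\,Q(\eta)\,d\sigma = -\tfrac{\alpha}{2}\iint(\psi'-\psi)(\eta'-\eta)\,b\,d\sigma'\,d\sigma$ and then expand the quadratic form directly, whereas the paper symmetrizes line by line and factors $(a'-a)\cdot\bigl[2(\nabla_z F'-\nabla_z F)-(a'+a)(F'-F)\bigr]$ before recognizing the square; the algebra is the same.
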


\begin{proof}
We start from the integral defining $I_{parallel}$ and differentiate in the direction $Q$. Using $\nabla_z \alpha=0$, we symmetrize the integral.
\begin{align*}
\langle &I'_{parallel}(F), Q(F) \rangle = \iiint \left(2 \nabla_z \log F \cdot \nabla_z Q - |\nabla_z \log F|^2 Q \right) \ r^{d-1} \dd \sigma \dd r \dd z \\
&= \iiiint \alpha \big(2 \nabla_z \log F \cdot (\nabla_z F'-\nabla_z F) - |\nabla_z \log F|^2 (F'-F) \big) \ b(\sigma'\cdot\sigma) r^{d-1} \dd \sigma' \dd \sigma \dd r \dd z, \\
&= -\frac 12 \iiiint \alpha \Big( 2(\nabla_z \log F' - \nabla_z \log F) \cdot (\nabla_z F'-\nabla_z F) \\
&\qquad \qquad - (|\nabla_z \log F'|^2 - |\nabla_z \log F|^2) (F'-F) \Big) \ b(\sigma'\cdot\sigma) r^{d-1} \dd \sigma' \dd \sigma \dd r \dd z \\
&= -\frac 12 \iiiint \alpha (\nabla_z \log F' - \nabla_z \log F) \cdot \Big( 2 (\nabla_z F'-\nabla_z F) \\
& \qquad \qquad - (\nabla_z \log F' + \nabla_z \log F) (F'-F) \Big) \ b(\sigma'\cdot\sigma) r^{d-1} \dd \sigma' \dd \sigma \dd r \dd z \\
&= -\frac 12 \iiiint \alpha |\nabla_z \log F' - \nabla_z \log F|^2 (F'+F) \ b(\sigma'\cdot\sigma) r^{d-1} \dd \sigma' \dd \sigma \dd r \dd z. \qedhere
\end{align*}
\end{proof}

\begin{lemma}[Differentiating $I_{radial}$] \label{l:Iradial}
The following inequaliy holds,
\[
\langle I'_{radial}(F), Q(F) \rangle \leq \frac 12 \iiiint \frac{(\dalpha)^2}{\alpha} \frac{(F' - F)^2} {F'+F} \ b(\sigma'\cdot\sigma) r^{d-1} \dd \sigma' \dd \sigma \dd r \dd z.
\]
Here, we write $F = F(z,r,\sigma)$ and $F'=F(z,r,\sigma')$.
\end{lemma}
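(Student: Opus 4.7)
The plan is to mimic the computation of Lemma~\ref{l:Iparallel} as far as possible, noting that the integrand of $I_{radial}$ differs from that of $I_{parallel}$ only in that $\nabla_z$ is replaced by $\partial_r$. The one genuinely new feature is that, unlike $z$, the variable $r$ also appears through the scalar factor $\alpha(r)$ in front of the spherical integral. So differentiation in $r$ will produce an extra $\dalpha$ term that must then be controlled by the square that is naturally available.

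Concretely, I would first note that $\langle I'_{radial}(F),\phi\rangle = \iiint \bigl(2(\partial_r\log F)\,\partial_r\phi - (\partial_r\log F)^2 \phi\bigr) r^{d-1} \dd \sigma\dd r\dd z$, and apply this with $\phi = Q(F)$, where $\partial_r Q(F) = \dalpha \int(F'-F)b\dd\sigma' + \alpha \int(\partial_r F' - \partial_r F)b\dd\sigma'$. This gives a sum of two pieces, one carrying the factor $\alpha$ and one carrying the factor $\dalpha$. The piece carrying $\alpha$ is formally identical to the integrand in the proof of Lemma~\ref{l:Iparallel} with $\nabla_z$ replaced by $\partial_r$, so the same symmetrization $\sigma\leftrightarrow\sigma'$ that was used there converts it into
\begin{equation*}
-\tfrac12 \iiiint \alpha\,\bigl|\partial_r\log F' - \partial_r\log F\bigr|^2 (F'+F)\, b(\sigma'\cdot\sigma)\, r^{d-1}\dd\sigma'\dd\sigma\dd r\dd z,
\end{equation*}
which is nonpositive.

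The $\dalpha$ piece, after the same $\sigma\leftrightarrow\sigma'$ symmetrization, becomes the cross term $-\iiiint \dalpha\,(\partial_r\log F' - \partial_r\log F)(F'-F)\,b\,r^{d-1}$. I would then kill this cross term by an elementary weighted Young inequality: write $\dalpha XY = \sqrt{\alpha(F'+F)}\,X \cdot \frac{\dalpha Y}{\sqrt{\alpha(F'+F)}}$ with $X = \partial_r\log F' - \partial_r\log F$ and $Y = F'-F$, so that
\begin{equation*}
|\dalpha XY| \leq \tfrac12 \alpha (F'+F) X^2 + \tfrac12 \frac{(\dalpha)^2}{\alpha}\cdot\frac{Y^2}{F'+F}.
\end{equation*}
Substituting into the sum, the first summand is precisely canceled by the nonpositive square from the previous step, and the remaining term is exactly the claimed upper bound.

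The calculation is routine once one spots the weighted Young inequality; the only small point to watch is that the weight $\sqrt{\alpha(F'+F)}$ is chosen precisely to exploit the available negative square, so that nothing is wasted and the estimate is tight in the sense that it becomes an equality when $\dalpha=0$, recovering the analogue of Lemma~\ref{l:Iparallel}. No integrability issue arises because one works on smooth strictly positive solutions with finite Fisher information, as discussed earlier in the Preliminaries.
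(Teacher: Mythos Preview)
Your proposal is correct and follows essentially the same route as the paper: expand $\partial_r Q(F)$ into an $\alpha$-piece and an $\dalpha$-piece, symmetrize in $\sigma\leftrightarrow\sigma'$ to turn the $\alpha$-piece into the negative square $-\tfrac12\alpha(F'+F)|\partial_r\log F'-\partial_r\log F|^2$, and then absorb the $\dalpha$ cross term into that square. The paper phrases the last step as ``completing the square'' while you phrase it as a weighted Young inequality, but these are the same algebraic identity and yield exactly the same bound.
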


\begin{proof}
We write down the integral and symmetrize as in the previous lemma.
\begin{align*}
\langle &I'_{radial}(F), Q(F) \rangle = \iiint \left(2 \partial_r \log F \cdot \partial_r Q - |\partial_r \log F|^2 Q \right) \ r^{d-1} \dd \sigma \dd r \dd z, \\
\intertext{note that the integrand in $Q$ includes a factor $\alpha$ that depends on $r$,}
&= \iiiint \alpha(r) \big(2 \partial_r \log F \cdot (\partial_r F'- \partial_r F) - |\partial_r \log F|^2 (F'-F) \big) \ b(\sigma'\cdot\sigma) r^{d-1} \dd \sigma' \dd \sigma \dd r \dd z \\
& \qquad + \iiiint 2 \dalpha(r) \ \partial_r \log F  (F'- F) \ b(\sigma'\cdot\sigma) r^{d-1} \dd \sigma' \dd \sigma \dd r \dd z, \\
\intertext{we symmetrize in $\sigma$ and $\sigma'$,}
&= -\frac 12 \iiiint \alpha |\partial_r \log F' - \partial_r \log F|^2 (F'+F) \ b(\sigma'\cdot\sigma) r^{d-1} \dd \sigma' \dd \sigma \dd r \dd z \\
& \qquad + \iiiint \dalpha(r) \ (\partial_r \log F - \partial_r \log F')  (F'- F) \ b(\sigma'\cdot\sigma) r^{d-1} \dd \sigma' \dd \sigma \dd r \dd z, \\
\intertext{we complete squares,}
&\leq \iiiint \frac{\dalpha(r)^2}{2\alpha(r)} \frac{(F'- F)^2}{F'+F} \ b(\sigma'\cdot\sigma) r^{d-1} \dd \sigma' \dd \sigma \dd r \dd z. \qedhere
\end{align*}
\end{proof}

We are left with differentiating the spherical part of the Fisher information. It is a more delicate computation that we explain in a later section. The case $d=2$ is much simpler and we explain it right here.

\begin{lemma}[Differentiating $I_{spherical}$ in 2D] \label{l:Iradial-2D}
Let us assume that $d=2$. In this case we can write the function $F = F(z,r,\theta)$ where $\theta$ is an angular variable and $F$ is $\pi$-periodic with respect to $\theta$.

The following identity holds
\[ \langle I'_{spherical}(F), Q(F) \rangle = \frac 12 \iiiint -\alpha \left| \partial_\theta \log F' - \partial_\theta \log F \right|^2 (F'+F) \ b(\cos(\theta'-\theta)) r^{d-3} \dd \theta' \dd \theta \dd r \dd z.\]
Here, we write $F = F(z,r,\theta)$ and $F'=F(z,r,\theta')$.
\end{lemma}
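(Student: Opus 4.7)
The plan is to reproduce the argument of Lemma~\ref{l:Iparallel}, replacing the flat gradient $\nabla_z$ with the angular derivative $\partial_\theta$. Two simplifications particular to $d=2$ make this possible. First, on $S^1$ one has $|\nabla_\sigma F|^2 = |\partial_\theta F|^2$, so after absorbing one factor of $r$ the spherical Fisher information reads $I_{spherical} = \iiint |\partial_\theta F|^2 F^{-1} r^{d-3} \dd \theta \dd r \dd z$. Second, because $\sigma' \cdot \sigma$ is a function of the difference $\theta' - \theta$ only, the kernel $b(\cos(\theta'-\theta))$ is translation-invariant in the angular variable. Consequently $\partial_\theta$ commutes with $\B$ (this is the very simple 2D incarnation of Lemma~\ref{l:BandDelta_commute}), and since $\alpha$ depends only on $r$ it passes through $\partial_\theta$; together these give $\partial_\theta Q(F) = \alpha(r) \int (\partial_\theta F(\theta') - \partial_\theta F(\theta)) b(\cos(\theta'-\theta)) \dd \theta'$.

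With this commutation in hand, the calculation mirrors Lemma~\ref{l:Iparallel} step by step. Differentiating $I_{spherical}$ in the direction $Q(F)$ produces the integrand $2 \partial_\theta \log F \cdot \partial_\theta Q(F) - |\partial_\theta \log F|^2 Q(F)$, weighted by $r^{d-3}$. I would then unfold both $Q(F)$ and $\partial_\theta Q(F)$ as integrals against $b(\cos(\theta'-\theta))$, observe that this kernel is invariant under swapping $\theta$ and $\theta'$, and symmetrize: the antisymmetric combination reassembles as $2(\partial_\theta \log F' - \partial_\theta \log F)(\partial_\theta F' - \partial_\theta F) - (|\partial_\theta \log F'|^2 - |\partial_\theta \log F|^2)(F' - F)$ divided by $2$. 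Finally, the algebraic identity $2(\partial_\theta F' - \partial_\theta F) - (\partial_\theta \log F' + \partial_\theta \log F)(F'-F) = (\partial_\theta \log F' - \partial_\theta \log F)(F'+F)$ completes the square and yields the claimed expression with a minus sign.

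I do not expect a real obstacle here: the 2D structure trivializes what in general dimension is the delicate commutator between angular differentiation and the non-local operator $\B$. In higher $d$, differentiating $Q(F)$ along a spherical vector field produces extra terms coming from $\nabla_\sigma(\sigma'\cdot\sigma)$, and one is forced into the full iterated carré-du-champ machinery of Section~\ref{s:carré-du-champ}. Here, translation invariance on $S^1$ eliminates all of these contributions, so Lemma~\ref{l:Iradial-2D} serves as a clean warm-up illustrating the algebra that the spherical carré-du-champ framework will have to reproduce in general dimension.
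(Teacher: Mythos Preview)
Your proposal is correct and matches the paper's own proof, which simply reads ``We write down the integral and symmetrize as in Lemma~\ref{l:Iparallel}.'' Your discussion of why the 2D translation invariance of $b(\cos(\theta'-\theta))$ makes $\partial_\theta$ commute with $\B$ is exactly the point, and the subsequent algebra is identical to that of Lemma~\ref{l:Iparallel}.
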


\begin{proof}
We write down the integral and symmetrize as in Lemma \ref{l:Iparallel}.
\end{proof}

\section{Carré du champ operators on the sphere}
\label{s:carré-du-champ}

In this section we analyze the \emph{carré du champ} operators and the corresponding Bakry-\'Emery formalism on the sphere corresponding to the heat flow and the flow by the integro-differential diffusion $\B$.

\subsection{Definitions and first properties}

The \emph{carré du champ} operator associated to the Laplace-Beltrami operator on the sphere $S^{d-1}$ consists in the following formula,
\begin{equation} \label{e:silly-carré-du-champ}
\Gamma_\Delta(f,g) := \frac 12 \left( \Delta(f g) - g \Delta f- f \Delta g \right).
\end{equation}
A direct computation shows that $\Gamma_\Delta(f,g)$ is simply equal to $\nabla f \cdot \nabla g$.

We may consider the carré du champ operator associated to the operator $\B$ instead of the Laplacian, 
\begin{equation} \label{e:B-carré-du-champ}
\Gamma_\B(f,g) := \frac 12 \left( \B(f g) - g \B f- f \B g \right).
\end{equation}
From a direct computation, one can verify that 
\[ \Gamma_\B(f,g) = \frac 12 \int_{S^{d-1}} \left( f(\sigma')-f(\sigma) \right) \left( g(\sigma')-g(\sigma) \right) b(\sigma' \cdot \sigma) \dd \sigma'.\]

Some familiar looking integration-by-parts formulas hold with the operators $\Delta$, $\B$ and their corresponding carré du champ operators. We state them in the following lemma.
\begin{lemma}[Carré du champ operators and integration by parts]
Given any two functions $f,g: S^{d-1} \to \R$, the following identities hold.
\begin{align*}
\int_{S^{d-1}} \Gamma_\Delta(f,\B g) \dd \sigma = \int_{S^{d-1}} \Gamma_\Delta(\B f, g) \dd \sigma, \\
\int_{S^{d-1}} \Gamma_\B(f,\Delta g) \dd \sigma = \int_{S^{d-1}} \Gamma_\B(\Delta f, g) \dd \sigma.
\end{align*}
\end{lemma}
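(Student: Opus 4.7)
The plan is to reduce both identities to three ingredients: (a) self-adjointness of $\Delta$ on $S^{d-1}$ (classical integration by parts), (b) self-adjointness of $\B$ on $L^2(S^{d-1})$, and (c) the commutation $\Delta \B = \B \Delta$ established in Lemma~\ref{l:BandDelta_commute}. Only ingredient (b) requires a short verification: starting from the symmetric expression
\[ \Gamma_\B(f,g)(\sigma) = \frac12 \int_{S^{d-1}} (f(\sigma')-f(\sigma))(g(\sigma')-g(\sigma)) b(\sigma'\cdot\sigma)\dd\sigma', \]
expanding the product and using that the measure $b(\sigma'\cdot\sigma)\dd\sigma'\dd\sigma$ is symmetric under $\sigma\leftrightarrow\sigma'$, two of the four resulting terms cancel after swapping, yielding
\[ \int_{S^{d-1}} \Gamma_\B(f,g)\dd\sigma = -\int_{S^{d-1}} g\, \B f \dd\sigma = -\int_{S^{d-1}} f\, \B g \dd\sigma. \]
The analogous identity $\int \Gamma_\Delta(f,g)\dd\sigma = -\int f\,\Delta g\dd\sigma$ is immediate from $\Gamma_\Delta(f,g) = \nabla f \cdot \nabla g$ and Stokes' theorem.

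With these two ``integrated carré du champ = minus one operator on the other factor'' formulas in hand, the first identity is a three-step chain:
\[ \int \Gamma_\Delta(f, \B g)\dd\sigma \;=\; -\int f\, \Delta(\B g)\dd\sigma \;=\; -\int f\, \B(\Delta g)\dd\sigma \;=\; -\int (\B f)\, \Delta g \dd\sigma \;=\; \int \Gamma_\Delta(\B f, g)\dd\sigma, \]
where the second equality uses Lemma~\ref{l:BandDelta_commute} and the third uses self-adjointness of $\B$. The second identity is obtained by the mirror computation: starting from $\int \Gamma_\B(f,\Delta g)\dd\sigma = -\int f\, \B(\Delta g)\dd\sigma$, commuting $\B$ and $\Delta$ to get $-\int f\,\Delta(\B f)\dd\sigma$ (apologies, $-\int f\,\Delta(\B g)\dd\sigma$), then using self-adjointness of $\Delta$ to move it onto $f$ and rewriting as $-\int g\,\B(\Delta f)\dd\sigma = \int \Gamma_\B(\Delta f, g)\dd\sigma$.

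There is no genuine obstacle here: the entire content is packaged into the commutation lemma already proved, and the rest is purely formal manipulation of self-adjoint operators on $L^2(S^{d-1})$. One only needs to be slightly careful that all the operators involved are indeed $L^2$-symmetric, which in the case of $\B$ boils down to the symmetry $b(\sigma'\cdot\sigma) = b(\sigma\cdot\sigma')$ already exploited above. Smoothness of $f$, $g$, and $b$ is enough to justify all manipulations; the general case follows by the same approximation argument used in the proof of Lemma~\ref{l:BandDelta_commute}.
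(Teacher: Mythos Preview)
Your argument is correct and is exactly the approach the paper takes: the paper's proof is the single sentence ``This is a simple consequence of the fact that $\Delta$ and $\B$ commute, and they are self-adjoint,'' and you have simply unpacked that sentence into the explicit chain of equalities. The only cosmetic issue is the stray self-correction (``apologies, \dots'') mid-computation, which you should clean up.
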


\begin{proof}
This is a simple consequence of the fact that $\Delta$ and $\B$ commute, and they are self-adjoint.
\end{proof}

For a function $f \colon S^{d-1} \to (0,\infty)$, we may write its usual entropy as $\hs(f)$ and its Fisher information $\is (f)$,
\[ \hs(f) := -\int_{S^{d-1}} f \log f \dd \sigma \quad \text{ and } \quad \is(f) := \int_{S^{d-1}} \frac{|\nabla f|^2} f \dd \sigma .\]
When we differentiate the entropy with respect to the heat flow, we obtain the usual Fisher information.
\begin{align*}
 \langle \hs'(f), \Delta f\rangle &= -\int_{S^{d-1}} \Delta f \cdot \log f \dd \sigma \\
                                &= \int_{S^{d-1}} \Gamma_\Delta(f, \log f) \dd \sigma \\
                                &= \int_{S^{d-1}} f \Gamma_\Delta(\log f, \log f) \dd \sigma \\
                                  &= \int_{S^{d-1}} \frac{|\nabla f|^2} f \dd \sigma \\
                                  & = \is (f).
\end{align*}

When we differentiate the entropy along the flow of $\B$ instead, we obtain a different integral quantity.
\begin{equation} \label{e:dHdB}
\begin{aligned}
\langle \hs'(f), \B f\rangle &= -\int_{S^{d-1}} \B f \cdot \log f \dd \sigma \\
&= \int_{S^{d-1}} \Gamma_\B(f, \log f) \dd \sigma \\
&= \frac 12 \iint_{S^{d-1} \times S^{d-1}} (f(\sigma') - f(\sigma)) \log \left( \frac{f(\sigma')}{f(\sigma)} \right) b(\sigma'\cdot \sigma) \dd \sigma' \dd \sigma.
\end{aligned}
\end{equation}

It is easy to check that, due to the monotonicity of the $\log$ function, $\Gamma_\B(f,\log f) \geq 0$ pointwise. However, unlike the case of the Laplacian, in general we have $\Gamma_\B(f, \log f) \neq f \Gamma_\B(\log f, \log f)$.

We will now discuss the second order \emph{carré du champ} operators associated to $\Delta$ and $\B$. The usual second order carré du champ is given by
\[ \Gamma_2(f,g) = \Gamma^2_{\Delta,\Delta}(f,g) = \frac 12 \left( \Delta \Gamma(f,g) - \Gamma(\Delta f,g) - \Delta \Gamma(f,\Delta g) \right).\]
The second order carré du champ operators that we want to consider mix the flow of $\Delta$ and $\B$. We define
\begin{equation} \label{e:2nd-order-carré-du-champ}
\begin{cases}
\Gamma^2_{\Delta,\B}(f,g) &:= \frac 12 \left( \Delta \Gamma_\B(f,g) - \Gamma_\B(\Delta f,g) - \Gamma_\B(f,\Delta g)\right), \\[1ex]
\Gamma^2_{\B,\Delta}(f,g) &:= \frac 12 \left( \B \Gamma_\Delta(f,g) - \Gamma_\Delta(\B f,g) - \Gamma_\Delta(f,\B g)\right), \\[1ex]
\Gamma^2_{\B,\B}(f,g) &:= \frac 12 \left( \B \Gamma_\B(f,g) - \Gamma_\B(\B f,g) - \Gamma_\B(f,\B g)\right).
\end{cases}
\end{equation}

Since the operators $\Delta$ and $\B$ commute, it is not difficult to verify that $\Gamma_{\Delta,\B}^2(f,g) = \Gamma_{\B,\Delta}^2(f,g)$. We include the definition of $\Gamma_{\B,\B}$ here for completeness only, since it is not used in this paper.

\subsection{Derivatives of the Fisher information along the flow of $\B$}

With these operators, we can write an expression for the derivative of the Fisher information along the flow of $\B$.
\begin{lemma}[Differentiating Fisher along the flow of $\B$] \label{l:fisher-along-B}
\[ \langle \is'(f), \B f \rangle = -2 \int_{S^{d-1}} f \Gamma^2_{\B,\Delta}(\log f, \log f) \dd \sigma.\]
\end{lemma}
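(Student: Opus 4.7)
The plan is to differentiate the representation $\is(f) = \int_{S^{d-1}} f\,\Gamma_\Delta(\log f, \log f) \dd \sigma$ directly, then rewrite the result in terms of the mixed second-order carré du champ $\Gamma^2_{\B,\Delta}$ using nothing more than integration by parts together with the self-adjointness of $\B$ and $\Delta$ and, crucially, their commutation (Lemma~\ref{l:BandDelta_commute}).

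First I would do a general first-variation calculation. Setting $\phi = \log f$, so that $\delta \phi = \delta f / f$, and using the Leibniz rule for the local carré du champ $\Gamma_\Delta(u,v)=\nabla u \cdot \nabla v$, a short calculation gives, for any admissible direction $g$,
\[
\langle \is'(f), g \rangle = \int_{S^{d-1}} \left( -g\,\Gamma_\Delta(\log f, \log f) + 2\,\Gamma_\Delta(\log f, g)\right) \dd \sigma.
\]
The key cancellation between $\delta f \cdot \Gamma_\Delta(\log f,\log f)$ and the cross term produced by $\delta \log f = \delta f/f$ is the same one that appears in the classical Bakry--\'Emery derivation of $\is$ along the heat flow.

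Next I would substitute $g = \B f$ and process each of the two terms. For the second term, integration by parts for the standard carré du champ and the identity $\nabla f = f \nabla \log f$ give
\[
\int_{S^{d-1}} \Gamma_\Delta(\log f, \B f) \dd \sigma
= -\int_{S^{d-1}} \log f \cdot \Delta \B f \dd \sigma
= -\int_{S^{d-1}} \B \log f \cdot \Delta f \dd \sigma
= \int_{S^{d-1}} f\,\Gamma_\Delta(\B \log f, \log f) \dd \sigma,
\]
where the second equality uses Lemma~\ref{l:BandDelta_commute} together with the self-adjointness of $\B$ and $\Delta$, and the third undoes the integration by parts. For the first term, self-adjointness of $\B$ yields $\int \B f \cdot \Gamma_\Delta(\log f,\log f) \dd \sigma = \int f\, \B \Gamma_\Delta(\log f,\log f)\dd \sigma$.

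Finally, combining the two yields
\[
\langle \is'(f), \B f \rangle = -\int_{S^{d-1}} f\, \B \Gamma_\Delta(\log f, \log f) \dd \sigma + 2\int_{S^{d-1}} f\, \Gamma_\Delta(\B \log f, \log f) \dd \sigma,
\]
and the integrand is precisely $-2 f\, \Gamma^2_{\B,\Delta}(\log f,\log f)$ by the definition \eqref{e:2nd-order-carré-du-champ} (using symmetry of $\Gamma_\Delta$). There is no genuine obstacle beyond careful bookkeeping: the one nontrivial input is the commutation of $\B$ and $\Delta$, which has already been established in Lemma~\ref{l:BandDelta_commute} and is the reason the mixed identity $\Gamma^2_{\B,\Delta} = \Gamma^2_{\Delta,\B}$ even makes sense in this setting.
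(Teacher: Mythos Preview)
Your proof is correct and follows essentially the same route as the paper's: both start from $\is(f)=\int f\,\Gamma_\Delta(\log f,\log f)$, use self-adjointness of $\B$ to turn $\int \B f\cdot\Gamma_\Delta(\log f,\log f)$ into $\int f\,\B\Gamma_\Delta(\log f,\log f)$, and handle the cross term via the commutation $\Delta\B=\B\Delta$ together with integration by parts to obtain $\int f\,\Gamma_\Delta(\B\log f,\log f)$. The only cosmetic difference is that you simplify the first-variation formula to $\int(-g\,\Gamma_\Delta(\log f,\log f)+2\,\Gamma_\Delta(\log f,g))$ before substituting $g=\B f$, whereas the paper keeps the term $2f\,\Gamma_\Delta(\log f,\B f/f)$ and expands it one line later.
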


\begin{proof}
We start from the identity for the integrand $|\nabla f|^2/f = f \Gamma_\Delta(\log f, \log f)$ and differentiate it along the direction $\B f$.
\begin{align*}
\langle \is'(f), \B f \rangle &= \int_{S^{d-1}} \B f \Gamma_\Delta(\log f, \log f) + 2 f \Gamma_\Delta(\log f, \B f /f) \dd \sigma \\
&= \int_{S^{d-1}} f \B \Gamma_\Delta(\log f, \log f) + 2 \nabla f \cdot \nabla \left( \frac {\B f} f \right) \dd \sigma \\
                            &= \int_{S^{d-1}} f \B \Gamma_\Delta(\log f, \log f) + 2 \nabla \log f \cdot \nabla \B f - 2 \B f \Gamma_\Delta(\log f, \log f) \dd \sigma \\
&= -2 \int_{S^{d-1}} f \Gamma_{\B,\Delta}^2(\log f, \log f) \dd \sigma. 
\end{align*}
To get the last line, we used the fact that $\B$ is self-adjoint together with the following equality,
  \[  \int \nabla \log f \cdot \nabla \B f \dd \sigma = \int \nabla \B \log f \cdot \nabla f \dd \sigma = \int f \Gamma_\Delta (\B \log f, \log f) \dd \sigma. \qedhere \]
\end{proof}

The integrand in Lemma \ref{l:fisher-along-B} is nonnegative for all $\sigma \in S^{d-1}$. To see that, we will derive some explicit expressions for $\Gamma^2_{\B,\Delta}$. It involves the quantity $\nabla \B f$. Naively, we would expect its expression to contain a factor $(\nabla f' - \nabla f)$ in the integrand. However, $\nabla f(\sigma')$ and $\nabla f(\sigma)$ belong to the tangent bundle of $S^{d-1}$ at different points. We cannot directly subtract them. We propose two methods to circunvent this difficulty and derive a correct expression for $\nabla \B f$. One is to differentiate along the vector fields used in \cite{landaufisher2023} for the Landau equation, the other is to use the maps from the tangent spaces at different points that are defined in \cite{villani1998boltzmann}. In this paper, we use the second approach, which is more natural in the context of the Boltzmann equation. The vector fields used in \cite{landaufisher2023} are more natural in the context of the Landau equation. In Appendix \ref{s:vector-fields}, we explain how the formulas involving $\Gamma^2_{\B,\Delta}$ can be written in terms of these vector fields.

\subsection{Comparing vectors in different tangent spaces}
\label{s:maps-between-tangent-spaces}

A possible approach to make computations involving  $\nabla \B f$ is to use the  linear transformations between $T_\sigma S^{d-1}$ and $T_{\sigma'} S^{d-1}$ described in \cite{villani1998boltzmann}. We describe the method here.

\subsubsection{The gradient of the linear integro-differential operator}

Following \cite{villani1998boltzmann}, we define the linear transformation $M_{\sigma',\sigma}: \R^d \to \R^d$ as
\begin{equation}
  \label{e:def-M}
  M_{\sigma',\sigma}(x) = (\sigma' \cdot \sigma) x - (\sigma \cdot x) \sigma'.
\end{equation}
Note that $M_{\sigma',\sigma}(x) \perp \sigma$ for any values of $\sigma, \sigma' \in S^{d-1}$ and $x \in \R^d$.

The following lemma is proved in \cite[Lemma 2]{villani1998boltzmann}. The proof is essentially to integrate by parts with respect to $\sigma'$, and work out the resulting formulas.

\begin{lemma} \label{l:cedric-lemma2}
Let $\sigma \in S^{d-1}$ be fixed, and $f: S^{d-1} \to \R$. Then
\[ \int_{S^{d-1}} f(\sigma') \Pi_{\sigma^\perp}(\sigma') \; \db(\sigma'\cdot\sigma) \dd \sigma' = \int_{S^{d-1}} M_{\sigma',\sigma}(\nabla f(\sigma')) \;b(\sigma' \cdot \sigma) \dd \sigma'. \]
Here $b'$ is the derivative of the function $b$, and $\Pi_{\sigma^\perp}(x) = x - (x \cdot \sigma) \sigma$ stands for the projection operator onto the tangent space $T_\sigma S^{d-1}$.
\end{lemma}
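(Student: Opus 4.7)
Both sides of the claimed identity are vectors in $T_\sigma S^{d-1}$ (the LHS because $\Pi_{\sigma^\perp}$ projects onto it, the RHS because $M_{\sigma',\sigma}(x)\perp\sigma$). So it suffices to fix an arbitrary $\tau\in T_\sigma S^{d-1}$ and show the inner products with $\tau$ agree. After expanding $\Pi_{\sigma^\perp}(\sigma')\cdot\tau=(\sigma'\cdot\tau)-(\sigma'\cdot\sigma)(\sigma\cdot\tau)=\sigma'\cdot\tau$ (using $\tau\cdot\sigma=0$) and noting the algebraic identity
\[
M_{\sigma',\sigma}(\nabla f(\sigma'))\cdot\tau
=(\sigma'\cdot\sigma)(\nabla f\cdot\tau)-(\sigma\cdot\nabla f)(\sigma'\cdot\tau)
=M_{\sigma,\sigma'}(\tau)\cdot\nabla f(\sigma'),
\]
the problem reduces to proving
\[
\int_{S^{d-1}}f(\sigma')\,(\sigma'\cdot\tau)\,\db(\sigma'\cdot\sigma)\dd\sigma'
=\int_{S^{d-1}}M_{\sigma,\sigma'}(\tau)\cdot\nabla f(\sigma')\;b(\sigma'\cdot\sigma)\dd\sigma'.
\]

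\textbf{Key step: integration by parts against a Killing vector field.} Define the vector field on $S^{d-1}$ by $V^\tau(\sigma'):=M_{\sigma,\sigma'}(\tau)=(\sigma'\cdot\sigma)\tau-(\sigma'\cdot\tau)\sigma$. Since $V^\tau\cdot\sigma'=0$ for every $\sigma'$, it is tangent to the sphere. Moreover $V^\tau$ is the restriction to $S^{d-1}$ of the infinitesimal rotation generated by the antisymmetric matrix $A=\tau\otimes\sigma-\sigma\otimes\tau$, so it is a Killing vector field, and in particular divergence-free on $S^{d-1}$. Apply the standard divergence theorem on the (boundaryless) sphere to the product $f(\sigma')\,b(\sigma'\cdot\sigma)$:
\[
0=\int_{S^{d-1}}\dv_{S^{d-1}}\bigl(f(\sigma')b(\sigma'\cdot\sigma)V^\tau\bigr)\dd\sigma'
=\int_{S^{d-1}}\bigl(V^\tau\cdot\nabla f\bigr)b\dd\sigma'+\int_{S^{d-1}}f\,\bigl(V^\tau\cdot\nabla_{\sigma'}\tilde b\bigr)\dd\sigma',
\]
where $\tilde b(\sigma'):=b(\sigma'\cdot\sigma)$.

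\textbf{Computing the directional derivative of $\tilde b$.} The spherical gradient is $\nabla_{\sigma'}\tilde b=\db(\sigma'\cdot\sigma)\bigl(\sigma-(\sigma'\cdot\sigma)\sigma'\bigr)$, hence
\[
V^\tau\cdot\nabla_{\sigma'}\tilde b
=\db(\sigma'\cdot\sigma)\,M_{\sigma,\sigma'}(\tau)\cdot\sigma
=-\,\db(\sigma'\cdot\sigma)\,(\sigma'\cdot\tau),
\]
using $M_{\sigma,\sigma'}(\tau)\cdot\sigma'=0$ and the direct computation $M_{\sigma,\sigma'}(\tau)\cdot\sigma=(\sigma'\cdot\sigma)(\tau\cdot\sigma)-(\sigma'\cdot\tau)=-(\sigma'\cdot\tau)$. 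Substituting back into the Killing identity yields exactly the reduced identity above, which finishes the proof.

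\textbf{Expected obstacle.} The only subtlety is recognizing that $M_{\sigma,\cdot}(\tau)$ is a Killing vector field so its divergence vanishes — equivalently, one may carry out the same argument by changing variables $\sigma'\mapsto R_t\sigma'$ under a one-parameter family of rotations fixing the plane $\mathrm{span}(\sigma,\tau)$ and differentiating at $t=0$; this is essentially the path taken in \cite{villani1998boltzmann}. The algebraic flip between $M_{\sigma',\sigma}$ and $M_{\sigma,\sigma'}$ needed to match the statement in the lemma is purely symbolic.
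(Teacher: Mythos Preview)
Your proof is correct and follows exactly the approach indicated in the paper, which does not spell out the argument but simply says ``integrate by parts with respect to $\sigma'$ and work out the resulting formulas'' (referencing \cite{villani1998boltzmann}). Your use of the Killing vector field $V^\tau(\sigma')=M_{\sigma,\sigma'}(\tau)$ to perform this integration by parts is precisely in the spirit of the paper's Appendix~\ref{s:vector-fields}, where the same divergence-free vector fields $\bb_{ij}$ are used to establish the analogous Lemma~\ref{l:flowing-B-with-bi}.
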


Lemma \ref{l:cedric-lemma2} is useful to provide an expression for $\nabla \B f(\sigma)$. This is expressed in the following lemma, which is similar to Lemma \ref{l:flowing-B-with-bi} but in terms of these new operators $M_{\sigma',\sigma}$.
\begin{lemma}[Gradient of the integro-differential operator] \label{l:gradBf}
Let $f:S^{d-1} \to \R$ be an arbitrary smooth function. The following expression holds for $\nabla \B f(\sigma)$.
\[ \nabla \B f(\sigma) = \int_{S^{d-1}} \left( M_{\sigma',\sigma} \nabla f(\sigma') - \nabla f(\sigma) \right) b(\sigma'\cdot \sigma) \dd \sigma'. \]
\end{lemma}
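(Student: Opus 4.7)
The plan is to differentiate the definition
\[
\B f(\sigma) = \int_{S^{d-1}}\bigl(f(\sigma')-f(\sigma)\bigr)b(\sigma'\cdot\sigma)\dd\sigma'
\]
with respect to the sphere variable $\sigma$, and then use Lemma~\ref{l:cedric-lemma2} to convert the term involving $b'$ into one expressed with $M_{\sigma',\sigma}\nabla f(\sigma')$ and $b$. Since the two pieces produced by the naive differentiation are each only conditionally convergent under the standing hypothesis~\eqref{e:generic-b}, I would first truncate: replace $b$ by $b_\eps(c):=b(c)\one_{c\leq 1-\eps}$, prove the identity for the truncated operator $\B_\eps$, and then pass to the limit $\eps\to 0$.

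For fixed $\eps>0$ the differentiation is elementary. Using that $\nabla_\sigma[b_\eps(\sigma'\cdot\sigma)]=\db_\eps(\sigma'\cdot\sigma)\,\Pi_{\sigma^\perp}(\sigma')$, one finds
\[
\nabla\B_\eps f(\sigma) \;=\; \int_{S^{d-1}}(f(\sigma')-f(\sigma))\,\db_\eps(\sigma'\cdot\sigma)\,\Pi_{\sigma^\perp}(\sigma')\dd\sigma' \;-\; \nabla f(\sigma)\int_{S^{d-1}}b_\eps(\sigma'\cdot\sigma)\dd\sigma'.
\]
Applying Lemma~\ref{l:cedric-lemma2} to $f$ converts the piece $\int f(\sigma')\Pi_{\sigma^\perp}(\sigma')\db_\eps(\sigma'\cdot\sigma)\dd\sigma'$ into $\int M_{\sigma',\sigma}\nabla f(\sigma')\,b_\eps(\sigma'\cdot\sigma)\dd\sigma'$. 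Applying the same lemma to the constant function $1$ (whose gradient vanishes) yields $\int\Pi_{\sigma^\perp}(\sigma')\db_\eps(\sigma'\cdot\sigma)\dd\sigma'=0$, so the remaining piece $-f(\sigma)\int\Pi_{\sigma^\perp}(\sigma')\db_\eps(\sigma'\cdot\sigma)\dd\sigma'$ vanishes. Combining gives the claimed identity with $b$ replaced by $b_\eps$.

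The remaining, and principal, step is the passage $\eps\to 0$, which is where~\eqref{e:generic-b} becomes essential. The key cancellation is that $M_{\sigma,\sigma}$ restricted to $T_\sigma S^{d-1}$ is the identity, so $M_{\sigma',\sigma}\nabla f(\sigma')-\nabla f(\sigma)\to 0$ as $\sigma'\to\sigma$. A Taylor expansion of $\sigma'\mapsto M_{\sigma',\sigma}\nabla f(\sigma')$ around $\sigma'=\sigma$, combined with the fact that the measure $b(\sigma'\cdot\sigma)\dd\sigma'$ is invariant under rotations of $\sigma'$ around the axis $\sigma$ (which annihilates the linear-in-displacement contribution once integrated over that symmetry), yields an integrable bound of order $(1-(\sigma'\cdot\sigma)^2)\,b(\sigma'\cdot\sigma)$---exactly the quantity made finite by~\eqref{e:generic-b}. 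Dominated convergence then passes the identity from $\B_\eps$ to $\B$. I expect this final step to be the main obstacle, since the naive pointwise bound on $M_{\sigma',\sigma}\nabla f(\sigma')-\nabla f(\sigma)$ is only first-order in $|\sigma-\sigma'|$, and one must carefully exploit rotational symmetry around $\sigma$ to recover the full second-order cancellation before dominated convergence can be applied.
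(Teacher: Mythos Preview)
Your approach is essentially the same as the paper's: differentiate under the integral and invoke Lemma~\ref{l:cedric-lemma2}. Two minor differences worth noting. First, the paper applies Lemma~\ref{l:cedric-lemma2} in one stroke to the function $\sigma'\mapsto f(\sigma')-f(\sigma)$ (with $\sigma$ fixed), whose $\sigma'$-gradient is simply $\nabla f(\sigma')$; this is of course equivalent by linearity to your two separate applications to $f$ and to the constant $1$, but slightly cleaner. Second, the paper proceeds purely formally and does not truncate at all---your cutoff $b_\eps$ and limiting argument are additional rigor the paper does not supply. Your concern about the $\eps\to 0$ step (first-order pointwise bound being insufficient under~\eqref{e:generic-b}, needing rotational symmetry to extract the second-order cancellation) is legitimate, but the paper simply sidesteps it.
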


\begin{proof}
We start from the integral expression for $\B f$ and differentiate inside the integral sign.
\begin{align*}
\nabla \B f(\sigma) &= \nabla \int (f(\sigma') - f(\sigma)) b(\sigma' \cdot \sigma) \dd \sigma', \\
\intertext{we differentiate the integrand with respect to $\sigma$,}
&= \int -\nabla f(\sigma) b(\sigma' \cdot \sigma) + (f(\sigma') - f(\sigma)) \db(\sigma' \cdot \sigma) \Pi_{\sigma^\perp} (\sigma') \dd \sigma',
\intertext{we apply Lemma \ref{l:cedric-lemma2} to the second term,}
&= \int -\nabla f(\sigma) b(\sigma' \cdot \sigma) + M_{\sigma',\sigma} \nabla_{\sigma'}(f(\sigma') - f(\sigma)) b(\sigma' \cdot \sigma) \dd \sigma' \\
&= \int_{S^{d-1}} \left( M_{\sigma',\sigma} \nabla f(\sigma') - \nabla f(\sigma) \right) b(\sigma'\cdot \sigma) \dd \sigma'. \qedhere
\end{align*}
\end{proof}

\subsubsection{Formula for an iterated carré du champ}

We also define
\begin{equation}
  \label{e:def-P}
  P_{\sigma',\sigma}(x) := M_{\sigma',\sigma}(x) + (\sigma'\cdot x) \sigma = (\sigma' \cdot \sigma) x + (\sigma'\cdot x) \sigma - (\sigma \cdot x) \sigma'.
\end{equation}
Note that if $x \perp \sigma'$, then $P_{\sigma',\sigma}(x) = M_{\sigma',\sigma}(x)$, and in particular $P_{\sigma',\sigma}(x) \perp \sigma$. The operator $P_{\sigma',\sigma}(x)$ maps $T_{\sigma'} S^{d-1}$ into $T_{\sigma} S^{d-1}$.

We will only apply $P_{\sigma',\sigma}(x)$ to vectors in $T_{\sigma'} S^{d-1}$. In this subspace of $\R^d$, the operators $P_{\sigma',\sigma}(x)$ and $M_{\sigma',\sigma}(x)$ are indistinguishable. The operator $P_{\sigma',\sigma}(x)$ has a simple description in the full space $\R^d$ that is worth keeping in mind. When $\sigma = \sigma'$, it is clear that $P_{\sigma',\sigma}(x)$ is the identity. When $\sigma \neq \sigma'$, the operator $P_{\sigma',\sigma}(x)$ is a rotation on the two-dimensional subspace generated by $\sigma$ and $\sigma'$ that maps $\sigma'$ to $\sigma$. Moreover, $P_{\sigma',\sigma}(x)$ is $(\sigma'\cdot \sigma)$ times the identity on the orthogonal complement to $\sigma$ and $\sigma'$. The following lemma, whose proof can be found in \cite[Lemma 4]{villani1998boltzmann}, reflects this characterization of $P_{\sigma,\sigma'}$.

\begin{lemma} \label{l:Pcontracts}
For any vector $x \in \R^d$, we have $|P_{\sigma,\sigma'} (x)| \leq |x|$.

Moreover, if $\Pi$ is the orthogonal projection of $\R^d$ onto the subspace generated by $\sigma$ and $\sigma'$, then
\[ |P_{\sigma',\sigma} (x)|^2 = (\sigma' \cdot \sigma)^2 |\Pi_\perp x|^2 + |\Pi x|^2.\]
\end{lemma}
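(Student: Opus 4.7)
The plan is to reduce the problem to the geometry of the two-dimensional subspace $V = \mathrm{span}(\sigma,\sigma')$, by exploiting the orthogonal decomposition $\R^d = V \oplus V^\perp$ and showing that $P_{\sigma',\sigma}$ acts in a very simple way on each factor. Specifically, I would first verify that $V$ and $V^\perp$ are invariant under $P_{\sigma',\sigma}$ and that the action on the two pieces has transparent descriptions: a contraction by the scalar $\sigma'\cdot\sigma$ on $V^\perp$, and an isometry on $V$. Granted both facts, the Pythagorean identity gives the equality claimed in the second part of the lemma, and the first part follows immediately since $(\sigma'\cdot\sigma)^2\leq 1$.

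More concretely, for $x\in V^\perp$ we have $\sigma\cdot x = \sigma'\cdot x = 0$, so the definition of $P_{\sigma',\sigma}$ collapses to $P_{\sigma',\sigma}(x) = (\sigma'\cdot\sigma)\,x$. This handles $V^\perp$. For $V$, I would verify directly that $P_{\sigma',\sigma}(\sigma') = \sigma$ and $P_{\sigma',\sigma}(\sigma) = 2(\sigma'\cdot\sigma)\sigma - \sigma'$, and then compute the two inner products $|P_{\sigma',\sigma}(\sigma')|^2$, $|P_{\sigma',\sigma}(\sigma)|^2$, and $P_{\sigma',\sigma}(\sigma)\cdot P_{\sigma',\sigma}(\sigma')$, each of which should equal the corresponding inner product of $\sigma$ and $\sigma'$. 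This identifies $P_{\sigma',\sigma}\!\restriction_V$ with the (unique) rotation of the plane $V$ that sends $\sigma'$ to $\sigma$, and in particular shows it preserves the Euclidean norm.

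Once these two restrictions are understood, the images lie in orthogonal subspaces: $P_{\sigma',\sigma}(\Pi_\perp x) \in V^\perp$ since it is a scalar multiple of $\Pi_\perp x$, while $P_{\sigma',\sigma}(\Pi x) \in V$. The Pythagorean theorem then yields
\[ |P_{\sigma',\sigma}(x)|^2 = |P_{\sigma',\sigma}(\Pi x)|^2 + |P_{\sigma',\sigma}(\Pi_\perp x)|^2 = |\Pi x|^2 + (\sigma'\cdot\sigma)^2 |\Pi_\perp x|^2,\]
which is the claimed formula. The inequality $|P_{\sigma',\sigma}(x)|\leq |x|$ follows by bounding $(\sigma'\cdot\sigma)^2\leq 1$ and reassembling $|\Pi x|^2+|\Pi_\perp x|^2=|x|^2$.

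There is no real obstacle here; the only mildly delicate point is the invariance of $V$ under $P_{\sigma',\sigma}$ in the degenerate case $\sigma = \pm\sigma'$ (where the subspace $V$ has dimension $1$ instead of $2$). In the case $\sigma' = \sigma$, $P_{\sigma',\sigma}$ reduces to the identity and both claims are trivial; in the case $\sigma' = -\sigma$, the formula gives $P_{\sigma',\sigma}(x) = -x + (-\sigma\cdot x)\sigma - (\sigma\cdot x)(-\sigma) = -x$, so again the identity $|P_{\sigma',\sigma}(x)|^2 = |x|^2$ holds while $(\sigma'\cdot\sigma)^2 = 1$, consistent with the formula. So the decomposition argument goes through unchanged in the limiting configurations.
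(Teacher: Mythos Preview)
Your proof is correct and follows exactly the characterization the paper gives just before the lemma: $P_{\sigma',\sigma}$ acts as the rotation sending $\sigma'$ to $\sigma$ on $V=\mathrm{span}(\sigma,\sigma')$ and as multiplication by $\sigma'\cdot\sigma$ on $V^\perp$. The paper does not spell out the argument (it cites \cite[Lemma~4]{villani1998boltzmann}), but your verification via the Gram matrix of $\{\sigma,\sigma'\}$ is precisely the intended computation.
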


The operator $P_{\sigma',\sigma}$ is not self-adjoint. Its adjoint coincides with switching the indices $\sigma'$ and $\sigma$.
\[ P_{\sigma',\sigma}^\ast(x) = (\sigma' \cdot \sigma) x - (\sigma'\cdot x) \sigma + (\sigma \cdot x) \sigma' = P_{\sigma,\sigma'}.\]

For a vector $x$ tangent to $S^{d-1}$ at $\sigma'$ and $y$ tangent to $S^{d-1}$ at $\sigma$, we define
\begin{equation} \label{e:nonlocalconnection}
\begin{aligned}
|y-x|_{\sigma',\sigma}^2 &:= |x|^2 + |y|^2 - 2 x \cdot P_{\sigma',\sigma}(y) \\
&= |x|^2 + |y|^2 - 2 P_{\sigma,\sigma'} (x) \cdot y.
\end{aligned}
\end{equation}
We stress that $|y-x|_{\sigma',\sigma}^2$ is defined by abuse of notation. The vectors $x$ and $y$ belong to tangent spaces at different points. The difference $y-x$ inside the bars does not make sense by itself.

Using the notation above, we provide an expression for $\Gamma^2_{\B,\Delta}$.

\begin{lemma}[Formula for an iterated carré du champ] \label{l:Gamma2-with-T}
For any smooth function $f : S^{d-1} \to \R$, the following expression holds,
\[ \Gamma^2_{\B,\Delta}(f,f) = \frac 12 \int_{S^{d-1}} |\nabla f(\sigma') - \nabla f(\sigma)|_{\sigma',\sigma}^2 b (\sigma' \cdot \sigma) \dd \sigma'. \]
In particular, $\Gamma^2_{\B,\Delta}(f,f) \geq 0$.

Here, where we write $|y-x|_{\sigma',\sigma}$ we mean the expression defined in \eqref{e:nonlocalconnection}.
\end{lemma}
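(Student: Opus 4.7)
The plan is to expand $\Gamma^2_{\B,\Delta}(f,f)$ directly from its algebraic definition, substitute the integral representations of $\B$ and $\nabla \B$ from Lemma~\ref{l:gradBf}, and then recognise the resulting integrand as the nonlocal norm from \eqref{e:nonlocalconnection}.

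First, I would specialise the definition \eqref{e:2nd-order-carré-du-champ} to $g = f$ and use $\Gamma_\Delta(f,f) = |\nabla f|^2$ to obtain
\[ \Gamma^2_{\B,\Delta}(f,f) = \tfrac12 \B|\nabla f|^2 - \nabla f(\sigma)\cdot \nabla \B f(\sigma). \]
The first term is directly an integral with integrand $\tfrac12(|\nabla f(\sigma')|^2 - |\nabla f(\sigma)|^2)$ against $b(\sigma'\cdot\sigma)\dd\sigma'$. For the second, I would substitute the formula of Lemma~\ref{l:gradBf} for $\nabla \B f$ and take the inner product with $\nabla f(\sigma)$, producing an integrand equal to $\nabla f(\sigma)\cdot M_{\sigma',\sigma}\nabla f(\sigma') - |\nabla f(\sigma)|^2$. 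Combining the two pieces, the $|\nabla f(\sigma)|^2$ contributions recombine with coefficient $-\tfrac12 + 1 = +\tfrac12$, leading to
\[ \Gamma^2_{\B,\Delta}(f,f) = \tfrac12 \int_{S^{d-1}} \bigl(|\nabla f(\sigma')|^2 + |\nabla f(\sigma)|^2 - 2\nabla f(\sigma)\cdot M_{\sigma',\sigma}\nabla f(\sigma')\bigr) b(\sigma'\cdot\sigma) \dd \sigma'. \]

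The remaining step is to identify the bracket with $|\nabla f(\sigma') - \nabla f(\sigma)|^2_{\sigma',\sigma}$. Because $\nabla f(\sigma') \perp \sigma'$, the relation \eqref{e:def-P} gives $M_{\sigma',\sigma}\nabla f(\sigma') = P_{\sigma',\sigma}\nabla f(\sigma')$; the adjoint identity $P_{\sigma',\sigma}^* = P_{\sigma,\sigma'}$ then lets one pass between the two equivalent forms appearing in the definition \eqref{e:nonlocalconnection} of the nonlocal norm. For the nonnegativity claim, I would invoke Lemma~\ref{l:Pcontracts}, which gives $|P_{\sigma',\sigma}(y)|\le|y|$, together with Cauchy--Schwarz, so that $2\, x\cdot P_{\sigma',\sigma}(y) \le 2|x||y| \le |x|^2 + |y|^2$ and the integrand is pointwise nonnegative.

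The principal bookkeeping subtlety, and the only place where one must be careful, is the interplay between $M$ and $P$: they differ only by the term $(\sigma'\cdot x)\sigma$ and coincide on vectors tangent to the sphere at $\sigma'$. Since Lemma~\ref{l:gradBf} naturally produces $M$ while the target nonlocal norm is phrased in terms of $P$, the identification hinges on the tangency $\nabla f(\sigma')\perp \sigma'$. No harder analytic ingredient is needed.
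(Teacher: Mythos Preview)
Your proposal is correct and follows essentially the same route as the paper: start from $\Gamma^2_{\B,\Delta}(f,f)=\tfrac12\B|\nabla f|^2-\nabla f\cdot\nabla\B f$, insert Lemma~\ref{l:gradBf}, collect terms, and use $\nabla f(\sigma')\perp\sigma'$ to replace $M_{\sigma',\sigma}$ by $P_{\sigma',\sigma}$ so that the integrand matches the nonlocal norm \eqref{e:nonlocalconnection}. Your explicit justification of nonnegativity via Lemma~\ref{l:Pcontracts} and Cauchy--Schwarz is a small addition beyond what the paper spells out, but otherwise the arguments are the same.
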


\begin{proof}
By the definition of $\Gamma^2_{\B,\Delta}(f,f)$ we have
\begin{align*}
\Gamma^2_{\B,\Delta}(f,f) &= \frac 12 \B |\nabla f|^2 - \nabla f \cdot \nabla \B f, \\
\intertext{we use Lemma \ref{l:gradBf} for the second term,}
&= \int \left(\frac 12|\nabla f'|^2 - \frac 12|\nabla f|^2 - 2 \nabla f \cdot \left( M_{\sigma',\sigma} \nabla f' - \nabla f \right) \right) b(\sigma'\cdot \sigma) \dd \sigma' \\
&= \frac 12 \int \left(|\nabla f'|^2 + |\nabla f|^2 - 2 \nabla f \cdot M_{\sigma',\sigma} \nabla f' \right) b(\sigma' \cdot \sigma) \dd \sigma' \\
\intertext{we note that $\nabla f' \perp \sigma'$. In particular, $M_{\sigma',\sigma} \nabla f' = P_{\sigma',\sigma} \nabla f'$,}
&= \frac 12 \int \left(|\nabla f'|^2 + |\nabla f|^2 - 2 \nabla f \cdot P_{\sigma',\sigma} \nabla f' \right) b(\sigma' \cdot \sigma) \dd \sigma' \\
&= \frac 12 \int |\nabla f'-\nabla f|_{\sigma',\sigma}^2 b(\sigma' \cdot \sigma) \dd \sigma'. \qedhere
\end{align*}
\end{proof}

\subsection{The entropy production by integro-differential diffusion.}
\label{s:integral-log-sobolev}

In the previous section, we related the left hand side of \eqref{e:spherical-inequality} with the second order carré du champ operator $\Gamma^2_{\B,\Delta}$ and to the second derivative of the entropy in the directions of $\Delta f$ and $\B f$. It is natural to wonder if there is a similar characterization for the right hand side of \eqref{e:spherical-inequality} as some sort of integro-differential Fisher information. In this section, we discuss that question and show some relevant inequalities. 

In analogy with the classical Fisher information, let us name the quantity in \eqref{e:dHdB} as $-\IB(f)$ .
\begin{defn}[Entropy production for $\B$] \label{d:bfisher}
  Given a kernel function $b \colon [-1,1] \to [0,+\infty)$, the \emph{integro-differential Fisher information} is defined as,
\[ \IB (f) = \frac 12 \iint_{S^{d-1} \times S^{d-1}} (f'-f) \log(f'/f) b(\sigma' \cdot \sigma) \dd \sigma ' \dd \sigma.\]
\end{defn}
We recall the two other useful forms of $\IB$,
\begin{align*} \IB(f) &= \int_{S^{d-1}} \Gamma_\B(f, \log f) \dd \sigma \\
    &= \langle \hs'(f), \B f\rangle.
\end{align*}

The quantity $\IB$ is the derivative of the entropy by the flow of $\B$, in the same way that the Fisher information is the derivative of the entropy by the heat flow. In that sense, the quantity $\B$ might be interpreted as some integro-differential form of the Fisher information.

The following proposition, is similar to Lemma \ref{l:fisher-along-B} but with the opposite order of the operators.
\begin{lemma}[Differentiating the integro-differential Fisher along the heat flow] \label{l:BFisher-along-Delta}
\[ \langle \IB'(f), \Delta f \rangle = -2 \int_{S^{d-1}} f \Gamma^2_{\Delta,\B}(\log f, \log f) \dd \sigma.\]
\end{lemma}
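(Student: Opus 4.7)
My plan is to mirror the proof of Lemma \ref{l:fisher-along-B}, with the roles of $\Delta$ and $\B$ swapped, relying crucially on the commutation $\Delta \B = \B \Delta$ (Lemma \ref{l:BandDelta_commute}) together with the self-adjointness of both operators. Since $\Gamma^2_{\B,\Delta} = \Gamma^2_{\Delta,\B}$ (as noted after \eqref{e:2nd-order-carré-du-champ}), it suffices to derive the identity for one of these iterated carré du champs; I will aim at $\Gamma^2_{\B,\Delta}$ because its defining formula is more directly tractable after an integration by parts.

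The starting point is the alternative form of $\IB$ coming from \eqref{e:dHdB}, namely $\IB(f) = -\int_{S^{d-1}} \B f \cdot \log f \dd \sigma$. Differentiating along the direction $\Delta f$ gives
\[ \langle \IB'(f), \Delta f \rangle = -\int_{S^{d-1}} \B \Delta f \cdot \log f \dd \sigma - \int_{S^{d-1}} \B f \cdot \frac{\Delta f}{f} \dd \sigma. \]
In the first term I invoke Lemma \ref{l:BandDelta_commute} to replace $\B \Delta f$ by $\Delta \B f$, then move $\Delta$ onto $\log f$ by self-adjointness. In the second term I use the Riemannian chain rule $\Delta f / f = \Delta \log f + \Gamma_\Delta(\log f, \log f)$ (since $\Gamma_\Delta(g,g)=|\nabla g|^2$). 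After these two rewritings the derivative takes the form
\[ \langle \IB'(f), \Delta f \rangle = -2 \int_{S^{d-1}} \B f \cdot \Delta \log f \dd \sigma - \int_{S^{d-1}} \B f \cdot \Gamma_\Delta(\log f, \log f) \dd \sigma. \]

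Finally I identify this expression with $-2 \int f \Gamma^2_{\B,\Delta}(\log f, \log f) \dd \sigma$. For the second term, self-adjointness of $\B$ gives $-\int \B f \cdot \Gamma_\Delta(\log f, \log f) \dd \sigma = -\int f \cdot \B \Gamma_\Delta(\log f, \log f) \dd \sigma$. For the first term I apply self-adjointness of $\B$, then commutativity, then self-adjointness of $\Delta$, to rewrite
\[ -2 \int_{S^{d-1}} \B f \cdot \Delta \log f \dd \sigma = -2 \int_{S^{d-1}} f \cdot \Delta \B \log f \dd \sigma = 2 \int_{S^{d-1}} \nabla f \cdot \nabla \B \log f \dd \sigma = 2 \int_{S^{d-1}} f \Gamma_\Delta(\B \log f, \log f) \dd \sigma. \]
Adding the two contributions and comparing with the definition \eqref{e:2nd-order-carré-du-champ} of $\Gamma^2_{\B,\Delta}$ closes the computation.

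The proof is entirely algebraic and there is no real analytic obstacle; the only thing to be careful about is the order in which the commutation and the two integration-by-parts moves are applied, since applying self-adjointness of $\B$ and $\Delta$ in the wrong order would land the operators on $f$ rather than on $\log f$ and would not recognize the pattern of $\Gamma^2$. Assuming that $f$ is smooth and strictly positive (as guaranteed by the class of solutions considered, via Theorem \ref{t:hst} and Lemma \ref{l:finite-fisher}), all the integrations by parts on the closed manifold $S^{d-1}$ are justified.
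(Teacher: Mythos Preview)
Your computation is correct. It differs from the paper's proof, which is more conceptual: the paper observes that $\IB(f) = \langle \hs'(f), \B f \rangle$, so $\langle \IB'(f), \Delta f \rangle$ amounts to differentiating the entropy first along $\B$ and then along $\Delta$; since $\hs''$ is symmetric and $\B\Delta = \Delta\B$, this equals the derivative in the opposite order, namely $\langle \is'(f), \B f \rangle$, and the result then follows directly from Lemma~\ref{l:fisher-along-B}. Your approach instead redoes the carré-du-champ algebra from scratch with the roles of $\Delta$ and $\B$ swapped. The paper's route is shorter and highlights the structural reason the two mixed second derivatives agree; yours is self-contained and does not invoke the abstract symmetry of $\hs''$, at the cost of repeating a computation already carried out in Lemma~\ref{l:fisher-along-B}.
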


\begin{proof}
Recall that $\IB(f) = \langle \hs'(f), \B f \rangle$. The quantity $\langle \IB'(f), \Delta f \rangle$ corresponds to differentiating $\hs(f)$ first in the direction of $\B$ and then in the direction of $\Delta$. Thus,
\begin{align*}
\langle \IB'(f), \Delta f \rangle &= \langle \hs''(f) \B f , \Delta f \rangle + \langle \hs'(f) , \B \Delta f \rangle.\\
\intertext{Observe that since $\hs''$ is symmetric and $\B \Delta f = \Delta \B f$, we obtain the same expression as if we differentiate in the direction of $\B f$ and $\Delta f$ in the opposite order.}
\langle \IB'(f), \Delta f \rangle &= \langle \is'(f), \B f \rangle \\
&= -2 \int_{S^{d-1}} f \Gamma^2_{\Delta,\B}(\log f, \log f) \dd \sigma. 
\end{align*}
We applied Lemma \ref{l:fisher-along-B} to get the last line. 
\end{proof}

When $a,b$ are two arbitrary positive numbers, the following elementary inequalities hold.
\[ (a-b)\log(a/b) \geq 4 (\sqrt a - \sqrt b)^2 \geq 2\frac{(a-b)^2}{a+b}.\]
Therefore, the following chain of elementary inequalities holds starting from the right-hand side of \eqref{e:spherical-inequality}.

\begin{align*}
\iint_{S^{d-1} \times S^{d-1}} \frac{(f(\sigma') - f(\sigma))^2} {f(\sigma')+f(\sigma)} b(\sigma' \cdot \sigma) \dd \sigma ' \dd \sigma &\leq 2 \iint_{S^{d-1} \times S^{d-1}} (\sqrt{f'} - \sqrt f)^2 b(\sigma' \cdot \sigma) \dd \sigma ' \dd \sigma \\
&\leq \frac 12 \iint_{S^{d-1} \times S^{d-1}} (f'-f) \log(f'/f) b(\sigma' \cdot \sigma) \dd \sigma ' \dd \sigma \\
&= \IB(f).
\end{align*}

In the cases where we can establish the inequality \eqref{e:spherical-inequality} with some constant $\Lambda_b > 0$ in this paper, we will actually be proving the following slightly stronger inequality.
\[ \int_{S^{d-1}} \Gamma^2_{\B,\Delta} (\log f,\log f) f \dd \sigma \geq 2\Lambda_b \iint_{S^{d-1} \times S^{d-1}} (\sqrt{f'} - \sqrt f)^2 b(\sigma' \cdot \sigma) \dd \sigma ' \dd \sigma.
 \]
It remains to be seen if a similar inequality holds with $\IB$ on the right-hand side. In this paper, we do not prove an inequality of the form
\begin{equation} \label{e:fiction} 
\int_{S^{d-1}} \Gamma^2_{\B,\Delta} (\log f,\log f) f \dd \sigma \geq \tilde \Lambda_b \IB(f),
\end{equation}
for any constant $\tilde \Lambda_b > 0$.

\section{The inequality on the sphere implies the monotonicity of the Fisher information}
\label{s:generic}

In this section, we prove Theorem~\ref{t:main}. In order to do so, we recall that we split the Fisher information in three terms $I_{parallel}$, $I_{radial}$ and $I_{spherical}$, see \eqref{e:fisher-split}. We already obtained suitable expressions for the derivatives of the first two in the direction of $Q(F)$. We are left with differentiating $I_{spherical}$.

\begin{lemma}[Differentiating $I_{spherical}$] \label{l:spherical-diffusion}
The following identity holds,
\[ 
\langle I'_{spherical}(F), Q(F) \rangle = -2 \iint \alpha(r) r^{d-3} \int F(z,r,\sigma) \Gamma^2_{\B,\Delta}(\log F(z,r,\cdot),\log F(z,r,\cdot)) \dd \sigma \dd r \dd z.
 \]
\end{lemma}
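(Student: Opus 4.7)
My plan is to reduce the computation to Lemma~\ref{l:fisher-along-B} by freezing the translation variable $z \in \R^d$ and the radial variable $r \in (0,\infty)$, and recognizing that $Q$ acts only on the spherical slice.

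First, I would rewrite the spherical part of the Fisher information by pulling together the factor $1/r^2$ inside the integrand with the Jacobian $r^{d-1}$:
\[
I_{spherical}(F) = \iint_{\R^d \times (0,\infty)} r^{d-3} \left( \int_{S^{d-1}} \frac{|\nabla_\sigma F(z,r,\sigma)|^2}{F(z,r,\sigma)} \dd \sigma \right) \dd r \dd z = \iint r^{d-3} \is(F(z,r,\cdot)) \dd r \dd z,
\]
where $\is$ denotes the spherical Fisher information introduced in Section~\ref{s:carré-du-champ}. Similarly, as observed just before Lemma~\ref{l:Iparallel}, the collision operator in polar coordinates reads
\[
Q(F)(z,r,\sigma) = \alpha(r) \, \B\bigl(F(z,r,\cdot)\bigr)(\sigma),
\]
so that $Q(F)$ is, for each fixed $(z,r)$, a function of $\sigma$ only.

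Next, I would differentiate $I_{spherical}$ in the direction $Q(F)$. Because the weight $r^{d-3}$ and the factor $\alpha(r)$ are independent of $\sigma$, the chain rule gives
\[
\langle I'_{spherical}(F), Q(F) \rangle = \iint r^{d-3} \alpha(r) \, \langle \is'(F(z,r,\cdot)), \B F(z,r,\cdot) \rangle \dd r \dd z.
\]
At this stage the computation is reduced to a purely spherical one at each fixed $(z,r)$.

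Finally, I would invoke Lemma~\ref{l:fisher-along-B}, applied to the smooth positive function $f := F(z,r,\cdot) : S^{d-1} \to (0,\infty)$, which gives
\[
\langle \is'(F(z,r,\cdot)), \B F(z,r,\cdot) \rangle = -2 \int_{S^{d-1}} F(z,r,\sigma) \, \Gamma^2_{\B,\Delta}\bigl(\log F(z,r,\cdot), \log F(z,r,\cdot)\bigr) \dd \sigma.
\]
Inserting this into the previous display yields the claimed formula. There is no real obstacle here beyond the bookkeeping of the polar-coordinate Jacobian and checking that differentiation under the integral sign is justified by the smoothness and decay assumptions on $F$ made at the beginning of Section~\ref{s:prelim}; the conceptual content is entirely contained in Lemma~\ref{l:fisher-along-B}, with the present lemma just being its parametric version across the $(z,r)$-slices.
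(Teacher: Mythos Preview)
Your proof is correct and follows exactly the same route as the paper: freeze $(z,r)$, recognize $Q(F)(z,r,\cdot) = \alpha(r)\,\B F(z,r,\cdot)$, and apply Lemma~\ref{l:fisher-along-B} slice by slice. The paper's own proof is in fact terser than yours, simply observing that with this interpretation the lemma \emph{is} Lemma~\ref{l:fisher-along-B}; your version makes the Jacobian bookkeeping explicit but adds nothing new.
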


\begin{proof}
For each fixed value of $z \in \R^d$ and $r > 0$, we consider the function $F(z,r,\cdot): S^{d-1} \to (0,\infty)$. The gradient $\nabla_\sigma F$ corresponds to the gradient of this function on the tangent space of the sphere. The value of the operator $Q(F)$ defined in \eqref{e:Q} is the same as $\alpha(r) \B[F(z,r,\cdot)](\sigma)$.

With this interpretation, Lemma \ref{l:spherical-diffusion} is the same as Lemma \ref{l:fisher-along-B}.
\end{proof}

\begin{proof}[Proof of Theorem \ref{t:main}]
  In view of \eqref{e:fisher-split} and Lemmas~\ref{l:Iparallel}, \ref{l:Iradial} and \ref{l:spherical-diffusion}, we have
  \[2\langle I'(F),Q(F) \rangle \le -D_{parallel} -D_{spherical} + R\]
  with
\begin{align*}
D_{parallel} &=  \iiiint \alpha \left| \nabla_z \log F' - \nabla_z \log F \right|^2 (F'+F) \ b(\sigma'\cdot\sigma) r^{d-1} \dd \sigma' \dd \sigma \dd r \dd z, \\
D_{spherical} &= 4 \iint \alpha(r) r^{d-3} \int F(z,r,\sigma) \left[\Gamma^2_{\B,\Delta}(\log F(z,r,\cdot),\log F(z,r,\cdot))\right](\sigma) \dd \sigma \dd r \dd z, \\
R &= \iiiint \frac{(\dalpha)^2}{\alpha} \frac{(F - F')^2} {F'+F} \ b(\sigma'\cdot\sigma) r^{d-1} \dd \sigma' \dd \sigma \dd r \dd z.
\end{align*}

We now claim that $D_{spherical} \geq R$. To verify this, for each value of $z$ and $r$, we apply the inequality \eqref{e:spherical-inequality} to $f(\sigma) = F(z,r,\sigma)$. Using the assumption $r^2 \dalpha(r)^2/\alpha^2 \le 4 \Lambda_b$, we obtain $D_{spherical} \geq R$. This proves \eqref{e:aim} and consequently also Theorem \ref{t:main}.
\end{proof}

In order for Theorem \ref{t:main} to be useful, we need explicit lower bounds on $\Lambda_b$ that can be computed in practice. We will justify \eqref{e:spherical-inequality} by combining the following three inequalities.
\begin{align}
\int_{S^{d-1}} \Gamma^2_{\B,\Delta} (\log f,\log f) f \dd \sigma &\geq \cOne \int_{S^{d-1}} \frac{|\nabla f|^2}f \dd \sigma. \label{e:DtoFisher} \\
\cP \int_{S^{d-1}} |\nabla f|^2 \dd \sigma &\geq \iint_{S^{2} \times S^{2}} (f(\sigma') - f(\sigma))^2 b(\sigma' \cdot \sigma) \dd \sigma' \dd \sigma. \label{e:H1toNonlocal} \\
\iint_{S^{d-1} \times S^{d-1}} (\sqrt{f(\sigma')} - \sqrt{f(\sigma)})^2 b(\sigma' \cdot \sigma) \dd \sigma' \dd \sigma &\geq \frac 12 \iint_{S^{d-1} \times S^{d-1}} \frac{(f(\sigma') - f(\sigma))^2} {f(\sigma')+f(\sigma)} b(\sigma' \cdot \sigma) \dd \sigma ' \dd \sigma. \label{e:squareroots}
\end{align}

\begin{lemma} \label{l:squareroots}
For any function $f : S^{d-1} \to (0,\infty)$, the inequality \eqref{e:squareroots} holds.
\end{lemma}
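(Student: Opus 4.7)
The plan is to reduce this to a simple pointwise inequality in two positive real variables and then integrate against the positive measure $b(\sigma'\cdot\sigma)\,d\sigma'\,d\sigma$. Specifically, I would prove that for any $a,b>0$,
\[ (\sqrt a - \sqrt b)^2 \;\geq\; \frac12\,\frac{(a-b)^2}{a+b}, \]
and then apply it with $a = f(\sigma')$ and $b = f(\sigma)$ under the double integral.

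To verify the scalar inequality, factor $(a-b)^2 = (\sqrt a - \sqrt b)^2(\sqrt a + \sqrt b)^2$, so that
\[ \frac{(a-b)^2}{a+b} = (\sqrt a - \sqrt b)^2 \,\cdot\, \frac{(\sqrt a + \sqrt b)^2}{a+b}. \]
By the arithmetic–geometric mean inequality, $2\sqrt{ab} \leq a+b$, hence $(\sqrt a + \sqrt b)^2 = a+b+2\sqrt{ab} \leq 2(a+b)$, giving $(\sqrt a + \sqrt b)^2/(a+b) \leq 2$. This yields the desired pointwise bound, with equality when $a=b$.

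Integrating this pointwise inequality against $b(\sigma'\cdot\sigma)\,d\sigma'\,d\sigma$ (which is a nonnegative measure) immediately yields \eqref{e:squareroots}. There is no real obstacle here: the only subtlety is checking positivity of $f$ to make sense of the square roots, which is guaranteed by the hypothesis $f: S^{d-1}\to (0,\infty)$.
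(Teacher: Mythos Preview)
Your proof is correct and follows exactly the same approach as the paper: reduce to the pointwise scalar inequality $(\sqrt a - \sqrt b)^2 \geq \tfrac12 (a-b)^2/(a+b)$ and integrate against the nonnegative kernel. Your argument is in fact slightly more detailed, since the paper merely states the elementary inequality without justification.
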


\begin{proof}
The integrands satisfy the inequality pointwise. This is a consequence of the elementary inequality
\( (\sqrt a - \sqrt b)^2 \geq \frac 12 \frac{(a-b)^2}{a+b},\)
which holds for any values of $a,b > 0$.
\end{proof}

\begin{lemma} \label{l:three-inequalities}
Assume that \eqref{e:DtoFisher} and \eqref{e:H1toNonlocal} hold for any positive function $f:S^{d-1} \to (0,\infty)$ such that $f(\sigma) = f(-\sigma)$. Then \eqref{e:spherical-inequality} also holds with $\Lambda_b \geq 2 \cOne / \cP$.
\end{lemma}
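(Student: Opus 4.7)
The proof is a straightforward chaining of the three inequalities \eqref{e:DtoFisher}, \eqref{e:H1toNonlocal}, and \eqref{e:squareroots}, with the key observation being that to relate the classical Fisher information to the nonlocal quadratic form on the right-hand side of \eqref{e:spherical-inequality}, we should apply the Poincaré-type inequality \eqref{e:H1toNonlocal} not to $f$ but to $\sqrt{f}$.

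The plan is the following. Starting from the left-hand side of \eqref{e:spherical-inequality}, apply \eqref{e:DtoFisher} to obtain
\[ \int_{S^{d-1}} \Gamma^2_{\B,\Delta}(\log f,\log f) f \dd \sigma \geq \cOne \int_{S^{d-1}} \frac{|\nabla f|^2}{f} \dd\sigma. \]
Next, use the pointwise identity $|\nabla \sqrt f|^2 = |\nabla f|^2/(4f)$, which rewrites the Fisher information in the form
\[ \int_{S^{d-1}} \frac{|\nabla f|^2}{f} \dd \sigma = 4 \int_{S^{d-1}} |\nabla \sqrt f|^2 \dd \sigma. \]
Since $f$ is positive and even, so is $\sqrt f$, and we may apply \eqref{e:H1toNonlocal} to $\sqrt f$ in place of $f$, yielding
\[ \int_{S^{d-1}} |\nabla \sqrt f|^2 \dd \sigma \geq \frac{1}{\cP} \iint_{S^{d-1} \times S^{d-1}} (\sqrt{f(\sigma')} - \sqrt{f(\sigma)})^2 b(\sigma'\cdot \sigma) \dd \sigma' \dd \sigma. \]
Finally, Lemma \ref{l:squareroots} (i.e., \eqref{e:squareroots}) bounds this last double integral from below by $\tfrac12 \iint \frac{(f'-f)^2}{f'+f} b \dd \sigma' \dd \sigma$.

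Putting the four estimates together gives
\[ \int_{S^{d-1}} \Gamma^2_{\B,\Delta}(\log f,\log f) f \dd \sigma \geq \cOne \cdot 4 \cdot \frac{1}{\cP} \cdot \frac{1}{2} \iint_{S^{d-1}\times S^{d-1}} \frac{(f(\sigma')-f(\sigma))^2}{f(\sigma')+f(\sigma)} b(\sigma'\cdot\sigma) \dd \sigma' \dd \sigma, \]
which is precisely \eqref{e:spherical-inequality} with $\Lambda_b \geq 2\cOne/\cP$. No step is an obstacle here; the only subtlety is remembering to feed $\sqrt f$ rather than $f$ into the Poincaré-type inequality, so that the resulting nonlocal quadratic form matches, up to Lemma \ref{l:squareroots}, the right-hand side of \eqref{e:spherical-inequality}.
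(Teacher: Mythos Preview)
Your proof is correct and follows essentially the same approach as the paper: apply \eqref{e:DtoFisher}, rewrite the Fisher information as $4\int |\nabla\sqrt f|^2$, apply \eqref{e:H1toNonlocal} to $\sqrt f$, and finish with \eqref{e:squareroots}. The key observation you highlight---feeding $\sqrt f$ rather than $f$ into the Poincar\'e-type inequality---is exactly the one the paper uses.
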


\begin{proof}
We apply \eqref{e:DtoFisher}, then \eqref{e:H1toNonlocal} to $\sqrt{f}$, and finally \eqref{e:squareroots}.
\begin{align*}
\int_{S^2} \Gamma^2_{\Delta,\B} (\log f,\log f) f \dd \sigma &\geq \cOne \int_{S^{d-1}} \frac{|\nabla f|^2}f \dd \sigma \\
&= 4\cOne \int_{S^{d-1}} \left|\nabla \sqrt f \right|^2 \dd \sigma \\
&\geq \frac{4\cOne}{\cP} \iint_{S^{2} \times S^{2}} \left(\sqrt{f(\sigma')} - \sqrt{f(\sigma)}\right)^2 b(\sigma' \cdot \sigma) \dd \sigma  \\
&\geq \frac{2\cOne}{\cP} \iint_{S^{2} \times S^{2}} \frac{(f(\sigma') - f(\sigma))^2} {f(\sigma')+f(\sigma)} b(\sigma' \cdot \sigma) \dd \sigma ' \dd \sigma. \qedhere
\end{align*}
\end{proof}

The purpose of the next sections is to provide a proof of \eqref{e:spherical-inequality} by proving \eqref{e:DtoFisher} and \eqref{e:H1toNonlocal} for various assumptions on the kernel $b$. As we will see, we can obtain the inequality \eqref{e:H1toNonlocal} in great generality with relatively precise constants $\cP$ (see Propositions \ref{p:poincare} and \ref{p:poincare-subordinate} below). The inequality \eqref{e:DtoFisher} is harder to prove. We provide an example in Section \ref{s:counterexample} of a kernel $b$ on $S^1$ where it fails for any $\cOne > 0$. We prove in Section \ref{s:curvature} that it holds in dimension $d \geq 3$ for some constant $\cOne>0$. Under stronger structural assumptions on the kernel $b$, we prove in Section \ref{s:subordinate} that both \eqref{e:DtoFisher} and \eqref{e:H1toNonlocal} hold in any dimension with relatively precise constants $\cOne$ and $\cP$.

The inequality \eqref{e:DtoFisher} can be used to derive an integro-differential version of the log-Sobolev inequality on the sphere in the following way.

\begin{prop}[A $\Gamma^2$-criterion for an integral log-Sobolev inequality] \label{p:integral-log-sobolev}
  The inequality \eqref{e:DtoFisher} implies the following entropy-production/entropy inequality \[ \IB(f) \geq 2 \cOne \left( \int_{S^{d-1}} f \log f \dd \sigma - \left( \int_{S^{d-1}} f \dd \sigma \right) \log \left( \fint_{S^{d-1}} f \dd \sigma \right) \right).\]
  Here, $\IB$ is the entropy-production by $\B$ explained in Definition \ref{d:bfisher}.
\end{prop}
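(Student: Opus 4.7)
The plan is a classical Bakry--\'Emery semigroup argument, where the heat semigroup on $S^{d-1}$ interpolates between $f$ and its mean $\bar f := \fint_{S^{d-1}} f \dd \sigma$, and the hypothesis \eqref{e:DtoFisher} is integrated along this flow. Let $(P_t)_{t \geq 0}$ denote the heat semigroup on $S^{d-1}$ and set $f_t := P_t f$ for a fixed smooth, strictly positive, antipodally symmetric $f$ (these properties are preserved by $P_t$). By the spectral gap of $-\Delta$ on the sphere, $f_t$ converges exponentially fast, together with all its derivatives, to the positive constant $\bar f$.

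The first ingredient is the standard dissipation identity for the entropy along the heat flow,
\[ \frac{d}{dt} \int_{S^{d-1}} f_t \log f_t \dd \sigma = - \is(f_t), \]
which, integrated from $0$ to $\infty$ and using $f_\infty \equiv \bar f$ (so that $\int_{S^{d-1}} f_\infty \log f_\infty \dd \sigma = \bigl(\int_{S^{d-1}} f \dd \sigma\bigr) \log \bar f$), produces
\[ \int_{S^{d-1}} f \log f \dd \sigma - \left( \int_{S^{d-1}} f \dd \sigma \right) \log \bar f = \int_0^\infty \is(f_t) \dd t. \]

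Next, I would differentiate $\IB(f_t)$ along the heat flow. By Lemma~\ref{l:BFisher-along-Delta} and the commutation of $\Delta$ and $\B$ (Lemma~\ref{l:BandDelta_commute}), which gives $\Gamma^2_{\Delta,\B} = \Gamma^2_{\B,\Delta}$, the assumption \eqref{e:DtoFisher} applied to $f_t$ yields
\[ \frac{d}{dt} \IB(f_t) \;=\; -2 \int_{S^{d-1}} f_t \, \Gamma^2_{\B,\Delta}(\log f_t, \log f_t) \dd \sigma \;\leq\; -2\cOne \, \is(f_t). \]
Since $\IB(\bar f) = 0$ for a constant function, integrating this differential inequality from $0$ to $\infty$ and combining with the entropy identity above gives
\[ \IB(f) \;=\; -\int_0^\infty \frac{d}{dt} \IB(f_t) \dd t \;\geq\; 2\cOne \int_0^\infty \is(f_t) \dd t \;=\; 2\cOne \left( \int_{S^{d-1}} f \log f \dd \sigma - \left(\int_{S^{d-1}} f \dd \sigma\right) \log \bar f \right), \]
which is the claimed inequality.

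The only technical point, rather than a genuine obstacle, is the justification of the integrations in time: one must verify that $\IB(f_t) \to 0$ and $\int_0^\infty \is(f_t) \dd t < \infty$. Both follow from the exponential convergence of $f_t$ to $\bar f$ in all smooth norms, which bounds both $\is(f_t)$ and $\IB(f_t)$ by a multiple of $\|f_t - \bar f\|_{C^1}^2$. A routine density argument then extends the inequality to all positive $f$ for which the right-hand side is finite.
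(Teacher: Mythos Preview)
Your proof is correct and follows essentially the same Bakry--\'Emery semigroup argument as the paper: run the heat flow, use $\partial_t \hs(f_t) = \is(f_t)$ together with Lemma~\ref{l:BFisher-along-Delta} and the hypothesis \eqref{e:DtoFisher} to get $\partial_t \hs(f_t) \leq -\frac{1}{2\cOne}\partial_t \IB(f_t)$, and integrate in $t$. Your version is simply a bit more explicit about the two integrations and the endpoint justifications.
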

\begin{proof}
We proceed as in the proof by Bakry and \'Emery of the log-Sobolev inequality. We solve the heat equation with initial data $f$.
\begin{align*}
u(0,\sigma) &= f(\sigma), \\
\partial_t u &= \Delta u.
\end{align*}
Note that $u(t,\sigma)$ converges to the average of $f$ as $t \to \infty$.

We proceed using that $\partial_t \hs(u) = \is(u)$ and Lemma \ref{l:BFisher-along-Delta}, together with the inequality \eqref{e:DtoFisher},
\begin{align*}
\partial_t \hs(t) &= \is(t) \\
&\leq \frac 1 \cOne \int_{S^2} \Gamma^2_{\Delta,\B} (\log f,\log f) f \dd \sigma \\
&= -\frac 1 {2\cOne} \partial_t \IB.
\end{align*}
Integrating the inequality on $t \in (0,\infty)$, we conclude the proof.
\end{proof}

In Corollary \ref{c:curvature-lower-bound} and Lemma \ref{l:integratingBEintime}, we compute the value of $\cOne$ for some classes of kernels $b$.

\section{Lower bounds for any collision kernel using the curvature of the sphere}
\label{s:curvature}

In the previous section, we proved the monotoncity of the Fisher information of any solution $f$ of \eqref{e:boltzmann} under the assumption of Theorem \ref{t:main}. In order for this theorem to be applicable to any collision kernel of physical relevance, we have to compute explicit lower bounds for the constant $\Lambda_b$ in \eqref{e:spherical-inequality}. Studying this constant $\Lambda_b$ is the purpose of the rest of the paper. In this section we develop some tools to take advantage of the curvature of the sphere and eventually prove that $\Lambda_b \geq d-2$.

Given any $x \perp \sigma$ and $y \perp \sigma'$. We observe that $|y-x|_{\sigma',\sigma}^2$ can be written as the sum of two positive terms
\[ |y-x|_{\sigma',\sigma}^2 = |y - P_{\sigma,\sigma'} x|^2 + \left( |x|^2 - |P_{\sigma,\sigma'} x|^2 \right).\]
Recall that $|\cdot|_{\sigma',\sigma}$, $P_{\sigma,\sigma'}$ and $M_{\sigma,\sigma'}$ are defined in \eqref{e:nonlocalconnection}, \eqref{e:def-P}  and \eqref{e:def-M}, respectively. The second term in the previous equality is positive because of Lemma \ref{l:Pcontracts}.

In this section, we take advantage of this decomposition of $|y-x|_{\sigma',\sigma}^2$ to derive a generic lower bound for the expression in Lemma \ref{l:Gamma2-with-T} for $\Gamma^2_{\B,\Delta}$.

\begin{lemma}[Lower bound for the iterated carré du champ using curvature] \label{l:pre-curvature-bound}
Assume $d>2$. The following inequality holds for any function $f : S^{d-1} \to \R$.
\[ \Gamma^2_{\B,\Delta}(f,f) \geq \cOne |\nabla f|^2.\]
Here
\[ \cOne = \frac{d-2}{2(d-1)} \int_{S^{d-1}} (1 - (e \cdot \sigma' )^2) b(e \cdot \sigma') \dd \sigma',\]
is a constant independent of the choice of $e \in S^{d-1}$.
\end{lemma}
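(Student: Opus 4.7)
The plan is to start from Lemma \ref{l:Gamma2-with-T}, which represents $\Gamma^2_{\B,\Delta}(f,f)(\sigma)$ as the spherical integral of $|\nabla f(\sigma') - \nabla f(\sigma)|^2_{\sigma',\sigma}$ weighted by $b(\sigma'\cdot\sigma)$. The whole point of the algebraic identity
\[ |y-x|^2_{\sigma',\sigma} = |y - P_{\sigma,\sigma'} x|^2 + \bigl(|x|^2 - |P_{\sigma,\sigma'} x|^2\bigr) \]
announced at the start of the section is that, upon taking $x = \nabla f(\sigma)$ and $y = \nabla f(\sigma')$, the squared-difference term is nonnegative and may be discarded, leaving the pointwise bound
\[ \Gamma^2_{\B,\Delta}(f,f)(\sigma) \ge \frac{1}{2}\int_{S^{d-1}} \bigl(|\nabla f(\sigma)|^2 - |P_{\sigma,\sigma'}\nabla f(\sigma)|^2\bigr)\, b(\sigma'\cdot\sigma)\dd\sigma'. \]
This particular grouping is what we need because the residual involves only $\nabla f(\sigma)$, matching the desired right-hand side.

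The second step is to evaluate the ``norm loss'' via Lemma \ref{l:Pcontracts}. Writing $\Pi$ for the orthogonal projection onto $\mathrm{span}(\sigma,\sigma')$ and using that $\nabla f(\sigma)\perp \sigma$, a direct computation gives
\[ |\nabla f(\sigma)|^2 - |P_{\sigma,\sigma'}\nabla f(\sigma)|^2 = (1-(\sigma\cdot\sigma')^2)\bigl(|\nabla f(\sigma)|^2 - (\nabla f(\sigma)\cdot\tau')^2\bigr), \]
where $\tau' \in T_\sigma S^{d-1}$ is the unit tangent vector at $\sigma$ pointing toward $\sigma'$ (defined when $\sigma'\neq \pm\sigma$, a null set).

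The final step is to perform the angular integral using rotational symmetry around $\sigma$. Parametrizing $\sigma' = c\sigma + \sqrt{1-c^2}\,\omega$ with $c = \sigma\cdot\sigma' \in [-1,1]$ and $\omega\in S^{d-2}\subset T_\sigma S^{d-1}$, we have $\tau' = \omega$ and $\dd\sigma' = (1-c^2)^{(d-3)/2}\dd\omega\,\dd c$. For fixed $v \in T_\sigma S^{d-1}$, the isotropic identity $\int_{S^{d-2}}(v\cdot\omega)^2 \dd\omega = |v|^2|S^{d-2}|/(d-1)$ yields
\[ \int_{S^{d-2}}\bigl(|v|^2 - (v\cdot\omega)^2\bigr)\dd\omega = \frac{d-2}{d-1}\,|v|^2\,|S^{d-2}|, \]
and re-assembling the $c$ and $\omega$ factors reproduces the announced constant $\cOne = \frac{d-2}{2(d-1)}\int_{S^{d-1}}(1-(e\cdot\sigma')^2)b(e\cdot\sigma')\dd\sigma'$. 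The restriction $d>2$ enters precisely through this averaging: for $d=2$ the factor $(d-2)/(d-1)$ vanishes, consistent with the fact that only one tangent direction is available and isotropic averaging produces no gain. There is no substantial obstacle here; the one non-obvious ingredient is the choice of decomposition that isolates $\nabla f(\sigma)$, which is helpfully flagged at the start of the section, and after that the argument reduces to Lemma \ref{l:Pcontracts} and the standard spherical-average computation.
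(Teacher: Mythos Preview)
Your proposal is correct and follows essentially the same approach as the paper: both start from Lemma~\ref{l:Gamma2-with-T}, discard the nonnegative term $|y-P_{\sigma,\sigma'}x|^2$ in the decomposition, invoke Lemma~\ref{l:Pcontracts} to reduce the norm loss to $(1-(\sigma\cdot\sigma')^2)|\Pi_\perp\nabla f(\sigma)|^2$, and finish with the isotropic average over $S^{d-2}$ in spherical coordinates. The only differences are cosmetic---you work with the unit tangent vector $\tau'=\omega\in S^{d-2}$ where the paper keeps $\omega$ on the sphere of radius $\sqrt{1-c^2}$---and the resulting computation of $\cOne$ is identical.
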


\begin{proof}
From the decomposition explained above, we know that
\begin{align*}
|y-x|_{\sigma',\sigma}^2 &\geq \left( |x|^2 - |P_{\sigma,\sigma'} x|^2 \right) \\
&= \bigg(1-(\sigma'\cdot \sigma)^2 \bigg) |\Pi_{\perp} x|^2,
\end{align*}
where $\Pi_\perp$ is the orthogonal projection to the codimension-2 space orthogonal to $\sigma'$ and $\sigma$ (recall Lemma~\ref{l:Pcontracts}).

We start from the expression in Lemma \ref{l:Gamma2-with-T} and apply this inequality to the integrand.
\begin{align*}
\Gamma_{\B,\Delta}^2(f,f) &= \frac 12 \int_{S^{d-1}} |\nabla f(\sigma') - \nabla f(\sigma)|_{\sigma',\sigma}^2 b (\sigma' \cdot \sigma) \dd \sigma' \\
&\geq \frac 12 \int_{S^{d-1}} \bigg(1 - (\sigma' \cdot \sigma)^2\bigg) \left|\Pi_\perp \nabla f(\sigma) \right|^2   b (\sigma' \cdot \sigma) \dd \sigma'.
\end{align*}

We express this last integal in spherical coordinates. We write $\sigma' = c \sigma + \omega$, with $c \in (-1,1) = \sigma' \cdot \sigma$ and $\omega \perp \sigma$. The value of $\omega$ belongs to the $(d-2)$-sphere of radius $(1-c^2)^{1/2}$ on the orthogonal complement of $\sigma$, which coincides with $T_\sigma S^{d-1}$. We observe that $\Pi_\perp \nabla f(\sigma) = \nabla f(\sigma) - (\omega \cdot \nabla f(\sigma)) \omega / |\omega|^2$. Thus,
\begin{align*}
\Gamma_{\B,\Delta}^2(f,f) &\ge \frac 12 \int_{-1}^1 \int_{\substack{\{ \omega \perp \sigma, \\ |\omega|^2 = 1-c^2\}}} (1-c^2) b(c) \left( |\nabla f(\sigma)|^2 - (\nabla f(\sigma) \cdot \omega)^2/(1-c^2) \right) \dd \omega \dd c \\
&= \cOne |\nabla f(\sigma)|^2. 
\end{align*}
Here,
\begin{align*}
\cOne &= \frac 12 \int_{-1}^1 \int_{\substack{\{ \omega \perp \sigma, \\ |\omega|^2 = 1-c^2\}}} (1-c^2) b(c) \left( 1 - (e \cdot \omega)^2/(1-c^2) \right) \dd \omega \dd c \\
&= \frac12 \left(\int_{S^{d-2}} \left( 1- (e  \cdot \bar \omega)^2 \right) \dd \bar \omega \right)  \int_{-1}^1 (1-c^2)^{\frac{d}2} b(c) \dd c \\
&= \frac{d-2}{d-1} \cdot \frac{\omega_{d-2}}2 \int_{-1}^1 (1-c^2)^{d/2} b(c) \dd c.\qedhere
\end{align*}
\end{proof}

\begin{cor}[Fisher bounds from below the integrated iterated carré du champ operator]  \label{c:curvature-lower-bound}
Assume $d>2$. For any function $f : S^{d-1} \to (0,\infty)$,  inequality~\eqref{e:DtoFisher} holds,
\[ \int_{S^{d-1}} f \Gamma^2_{\B,\Delta}(\log f,\log f) \dd \sigma \geq \cOne \int_{S^{d-1}} \frac{|\nabla f|^2} f \dd \sigma.\]
Here, for any $e \in S^{d-1}$,
\[ \cOne = \frac{d-2}{2(d-1)} \int_{S^{d-1}} (1 - (\sigma' \cdot e)^2) b(\sigma'\cdot e) \dd \sigma'.\]
\end{cor}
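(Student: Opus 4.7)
The plan is to derive this corollary as an immediate consequence of the pointwise lower bound established in Lemma \ref{l:pre-curvature-bound}. The point is that the bound
\[ \Gamma^2_{\B,\Delta}(g,g)(\sigma) \geq \cOne \, |\nabla g(\sigma)|^2 \]
was derived in Lemma \ref{l:pre-curvature-bound} purely from the pointwise formula of Lemma \ref{l:Gamma2-with-T} together with the contraction property of $P_{\sigma,\sigma'}$ given by Lemma \ref{l:Pcontracts}. In particular, the argument made no structural assumption on $g$ beyond enough smoothness for $\nabla g$ to make sense, so the inequality applies to any smooth function on the sphere.

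The natural choice here is to take $g = \log f$, which is smooth and well defined because $f > 0$ by hypothesis. Using the identity $|\nabla \log f|^2 = |\nabla f|^2 / f^2$, Lemma \ref{l:pre-curvature-bound} applied to $g = \log f$ yields the pointwise inequality
\[ \Gamma^2_{\B,\Delta}(\log f, \log f)(\sigma) \geq \cOne \, \frac{|\nabla f(\sigma)|^2}{f(\sigma)^2}. \]
Multiplying both sides by the strictly positive weight $f(\sigma)$ and integrating over $\sigma \in S^{d-1}$ gives
\[ \int_{S^{d-1}} f \, \Gamma^2_{\B,\Delta}(\log f, \log f) \dd \sigma \geq \cOne \int_{S^{d-1}} \frac{|\nabla f|^2}{f} \dd \sigma, \]
which is exactly the desired inequality \eqref{e:DtoFisher}. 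One then checks by inspection that the constant $\cOne$ produced by Lemma \ref{l:pre-curvature-bound} agrees with the one stated in the corollary.

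Since the corollary is a one-line substitution in a pointwise inequality followed by integration against $f \dd \sigma$, there is no real obstacle at this stage. All the geometric content — the use of the curvature of the sphere via the term $|x|^2 - |P_{\sigma,\sigma'} x|^2 = (1-(\sigma\cdot\sigma')^2) |\Pi_\perp x|^2$, and the orthogonal decomposition leading to the explicit constant — has already been absorbed into the proof of Lemma \ref{l:pre-curvature-bound}. The only mild point worth flagging is that the argument requires $d > 2$: in dimension $d=2$, the codimension-$2$ projection $\Pi_\perp$ is trivial and the curvature factor degenerates, which is consistent with the counterexample produced in Section \ref{s:counterexample}.
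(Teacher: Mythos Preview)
Your proof is correct and follows exactly the same approach as the paper: apply the pointwise bound of Lemma~\ref{l:pre-curvature-bound} to $g=\log f$, multiply by $f$, and integrate. The paper's own proof is the single line ``We integrate the inequality from Lemma~\ref{l:pre-curvature-bound},'' so you have simply spelled out the same argument in more detail.
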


\begin{proof}
We integrate the inequality from Lemma \ref{l:pre-curvature-bound}.
\end{proof}

The next step is to control the right hand side of \eqref{e:spherical-inequality} using the usual Fisher information. We do it precisely in the next section. 

\section{A Poincaré-type inequality}
\label{s:poincare}

We will prove the following estimate that provides an upper bound for integral Dirichlet forms in terms of the $\dot{H}^1$ norm of a function on the sphere. We will later apply this lemma to $\sqrt f$ in order to get estimates involving the Fisher information.

\begin{prop}[A Poincaré-type inequality] \label{p:poincare}
For any function $f : S^{d-1} \to (0,\infty)$ such that $f(\sigma) = f(-\sigma)$, the following inequality holds
\begin{equation}\label{e:poincare}
  \cP \int_{S^{d-1}} |\nabla f|^2 \dd \sigma \geq \iint_{S^{d-1} \times S^{d-1}} (f'-f)^2 b(\sigma'\cdot\sigma) \dd \sigma' \dd \sigma.
\end{equation}
Here, the constant $\cP$ is given for any fixed $e \in S^{d-1}$ by the following formula,
\[ \cP = \frac 1 {d-1} \int_{S^{d-1}} (1 - (\sigma \cdot e)^2)   b (e \cdot \sigma) \dd \sigma. \]
\end{prop}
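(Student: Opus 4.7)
The plan is a spectral reduction via spherical harmonics. Since $\B$ commutes with $\Delta$ (Lemma~\ref{l:BandDelta_commute}) and both operators are self-adjoint and rotation-invariant, they share the eigenspace decomposition $L^2(S^{d-1}) = \bigoplus_\ell H_\ell$ into $(-\Delta)$-eigenspaces $H_\ell$, with eigenvalues $\nu_\ell = \ell(\ell+d-2)$ for $-\Delta$ and some $\mu_\ell \geq 0$ for $-\B$. Since $Y_{\ell,m}(-\sigma) = (-1)^\ell Y_{\ell,m}(\sigma)$, the antipodal-symmetric function $f$ has only even-$\ell$ components $f_\ell$. Plancherel together with the identity $\iint (f'-f)^2 b(\sigma'\cdot\sigma)\dd\sigma'\dd\sigma = 2\int f(-\B f)\dd\sigma$ gives
\[
\int_{S^{d-1}} |\nabla f|^2 \dd\sigma = \sum_{\ell \text{ even}}\nu_\ell \|f_\ell\|_2^2, \qquad \iint (f'-f)^2 b(\sigma'\cdot\sigma) \dd\sigma' \dd\sigma = 2 \sum_{\ell \text{ even}} \mu_\ell \|f_\ell\|_2^2,
\]
so the Poincar\'e-type inequality reduces to the eigenvalue comparison $\cP \nu_\ell \geq 2 \mu_\ell$ for each even $\ell \geq 2$.

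By the Funk--Hecke theorem, $\mu_\ell = \int_{S^{d-1}}(1 - P_\ell(e\cdot\sigma)) b(e\cdot\sigma)\dd\sigma$, where $P_\ell(t) = C_\ell^{(d-2)/2}(t)/C_\ell^{(d-2)/2}(1)$ is the normalized Gegenbauer polynomial of degree $\ell$. For $\ell = 2$, the explicit formula $P_2(t) = (d t^2 - 1)/(d-1)$ yields $1-P_2(t) = d(1-t^2)/(d-1)$, hence $\mu_2 = d\,\cP$; combined with $\nu_2 = 2d$, the inequality at $\ell=2$ is in fact a tight equality $\cP\nu_2 = 2\mu_2$. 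Thus the constant $\cP$ stated in the proposition is precisely the one for which the spectral comparison is saturated at the second harmonic, and the task is to verify that no higher even harmonic violates the inequality.

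For each even $\ell \geq 4$, the inequality $\cP \nu_\ell \geq 2 \mu_\ell$ reduces, by integrating against the non-negative measure $b(t)(1-t^2)^{(d-3)/2}\dd t$, to the pointwise Gegenbauer estimate
\[
1 - P_\ell(t) \leq \frac{\nu_\ell(1-t^2)}{2(d-1)}, \qquad t \in [-1,1], \; \ell \text{ even} \geq 2.
\]
Both sides vanish at $t = \pm 1$, and the derivatives also match there (since $P_\ell'(1) = \nu_\ell/(d-1)$, from the Gegenbauer ODE $(1-t^2)P_\ell'' - (d-1)tP_\ell' + \nu_\ell P_\ell = 0$), so $1 - P_\ell(t) = (1-t^2) Q_\ell(t)$ for an even polynomial $Q_\ell$ of degree $\ell - 2$ satisfying $Q_\ell(\pm 1) = \nu_\ell/(2(d-1))$; the inequality amounts to $Q_\ell$ attaining its maximum on $[-1,1]$ at the endpoints $\pm 1$. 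The main obstacle is establishing this uniformly in even $\ell$ and $d$. It can be checked explicitly for low $\ell$ by factoring (e.g., for $d=3$: $1 - P_4(t) = 5(1-t^2)(7t^2+1)/8$, so $Q_4(t) = 5(7t^2+1)/8$ is maximized at $t = \pm 1$), and in general follows from a recursive analysis using the three-term Gegenbauer recurrence or the integral representation of $P_\ell$. Note that the restriction to even functions is essential: for odd $\ell$ the inequality fails at $t=-1$, since $P_\ell(-1) = -1$ gives $1-P_\ell(-1) = 2 > 0$ while the right-hand side vanishes there.
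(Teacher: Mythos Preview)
Your approach is essentially the same as the paper's: spectral decomposition into spherical harmonics, reduction to the eigenvalue comparison $\cP\,\nu_\ell \geq 2\mu_\ell$ for even $\ell$, identification of $\ell=2$ as the saturating mode via the explicit formula for $P_2$, and finally reduction to the pointwise Legendre/Gegenbauer inequality
\[
1 - P_\ell(t) \leq \frac{\nu_\ell}{2(d-1)}(1-t^2), \qquad t\in[-1,1],\ \ell \text{ even}.
\]
This is exactly the paper's Proposition~\ref{p:legendre} (after rewriting $1-P_2(t)=\tfrac{d}{d-1}(1-t^2)$ and $\lambda_2=2d$).

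The gap is that you do not prove this pointwise inequality. You reformulate it as ``$Q_\ell$ attains its maximum on $[-1,1]$ at the endpoints'' and then assert that this ``follows from a recursive analysis using the three-term Gegenbauer recurrence or the integral representation,'' but no such argument is given. This is precisely the hard part of the proof, and the paper devotes the bulk of Section~\ref{s:poincare} to it: the case $d=2$ is handled by the elementary bound $|\sin(\ell\theta)|\leq \ell|\sin\theta|$; for $d\geq 3$ the paper uses the integral representation of $P_\ell$ (Lemma~\ref{l:2Daverages}), splits the integrand via the complex identity $1-\Re z^2 = 2(\Im z)^2 + 1-|z|^2$ (Lemma~\ref{l:2}), bounds $|\Im(x+is\sqrt{1-x^2})^\ell|\leq \ell|s|\sqrt{1-x^2}$ by induction (Lemma~\ref{l:1}), and then integrates explicitly in $s$ to recover exactly $\nu_{2\ell}/(2(d-1))$. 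Neither the three-term recurrence nor the endpoint-maximum characterization of $Q_\ell$ yields this in any obvious way; the argument genuinely requires the integral representation and the two auxiliary lemmas. Your proposal correctly identifies the target inequality but stops short of the substantive step.
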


\begin{remark}
When the kernel $b$ is constant, the inequality \eqref{e:poincare} is exactly the Poincaré inequality on the sphere for functions with average zero.
\end{remark}

\subsection{Spectral decomposition}

The operators $-\Delta$ and $\B$ are self-adjoint and commute. Thus, they have a common base of eigenfunctions. Recall that the eigenfunctions of $-\Delta$ are spherical harmonics, corresponding to the restriction to $S^{d-1}$ of homogeneous harmonic polynomials in $\R^d$. Spherical harmonics have been studied extensively. A modern reference for the subject is \cite{efthimiou2014}.

Generically, we expect $\Delta$ and $\B$ to have the same eigenspaces. We will prove only one inclusion. That is, we prove that all eigenfunctions of $-\Delta$ with eigenvalue $\lambda$ are eigenfunctions of $\B$ with the same eigenvalue $\tilde \lambda$.

\begin{lemma}[Eigenspaces of $\Delta$ and $\B$] \label{l:eigenspaces}
Each eigenspace of $\Delta$ (which is a finite dimensional space of spherical harmonics) is contained in some eigenspace of $\B$.
\end{lemma}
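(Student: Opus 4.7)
The plan is to invoke the rotational invariance of both operators together with the structure theory of spherical harmonics. Since $b(\sigma'\cdot \sigma)$ depends only on the inner product $\sigma' \cdot \sigma$, the operator $\mathcal{B}$ is $O(d)$-equivariant, meaning $\mathcal{B}(f \circ R) = (\mathcal{B} f) \circ R$ for every orthogonal $R \in O(d)$. The Laplace-Beltrami operator $\Delta$ shares this property. The eigenspaces of $-\Delta$ on $S^{d-1}$ are the well-known spaces $\mathcal{H}_n$ of spherical harmonics of degree $n$, with eigenvalue $n(n+d-2)$.

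First I would note that since $\mathcal{B}$ and $\Delta$ commute (Lemma~\ref{l:BandDelta_commute}), $\mathcal{B}$ preserves each eigenspace $\mathcal{H}_n$ of $\Delta$. Restricted to the finite-dimensional space $\mathcal{H}_n$, $\mathcal{B}$ is self-adjoint and $O(d)$-equivariant. Now the key classical fact is that each $\mathcal{H}_n$ is an irreducible representation of $O(d)$ (a standard result on spherical harmonics, see e.g.\ \cite{efthimiou2014}). By Schur's lemma, any $O(d)$-equivariant endomorphism of an irreducible real representation is a scalar multiple of the identity, so $\mathcal{B}\big|_{\mathcal{H}_n}$ must act as $\tilde\lambda_n \cdot \mathrm{Id}$ for some scalar $\tilde\lambda_n \in \R$. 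This is exactly the desired inclusion: $\mathcal{H}_n$ sits inside the $\tilde\lambda_n$-eigenspace of $\mathcal{B}$.

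Alternatively, and more concretely, one can apply the Funk--Hecke formula, which states that for any $Y_n \in \mathcal{H}_n$ and any integrable radial kernel $k$,
\[
\int_{S^{d-1}} Y_n(\sigma') k(\sigma' \cdot \sigma)\, \dd \sigma' = \mu_n(k)\, Y_n(\sigma),
\]
where $\mu_n(k)$ is an explicit integral of $k$ against a Gegenbauer polynomial. Applying this with $k = b$ and subtracting the constant $Y_n(\sigma) \int b(\sigma' \cdot \sigma)\, \dd \sigma' = \mu_0(b) Y_n(\sigma)$ (which is independent of $\sigma$), one obtains
\[
\mathcal{B} Y_n = (\mu_n(b) - \mu_0(b))\, Y_n,
\]
giving the explicit eigenvalue $\tilde \lambda_n = \mu_n(b) - \mu_0(b)$.

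There is no real obstacle here: the only nontrivial input is the irreducibility of $\mathcal{H}_n$ under $O(d)$ (or equivalently the Funk--Hecke formula), which is a classical result in harmonic analysis on the sphere and can be quoted. I would favor the Schur's lemma presentation for its brevity, since the explicit eigenvalues are not required for the statement.
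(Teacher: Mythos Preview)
Your argument is correct and takes a slightly different route from the paper's. The paper proceeds concretely: it observes that the zonal (axially symmetric) spherical harmonics of degree $\ell$ form a one-dimensional subspace, which is therefore automatically an eigenline for $\mathcal{B}$; it then uses the classical fact that every degree-$\ell$ spherical harmonic is a linear combination of rotations of the zonal one, together with the $O(d)$-equivariance of $\mathcal{B}$, to propagate that eigenvalue to all of $\mathcal{H}_\ell$. Your approach packages the same rotational invariance through representation theory (irreducibility of $\mathcal{H}_n$ under $O(d)$ plus Schur's lemma), which is cleaner and more conceptual; the paper's version is more hands-on and avoids naming Schur's lemma. Both ultimately rest on the same structural facts about spherical harmonics from the cited reference.

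Two small caveats. First, Schur's lemma over $\mathbb{R}$ does not by itself yield a scalar (the commutant of a real irreducible representation can be $\mathbb{R}$, $\mathbb{C}$, or $\mathbb{H}$); here you are fine because $\mathcal{H}_n$ is absolutely irreducible, or more simply because $\mathcal{B}$ is self-adjoint, so any eigenspace of $\mathcal{B}\big|_{\mathcal{H}_n}$ is $O(d)$-invariant and hence equals $\mathcal{H}_n$. Second, your Funk--Hecke alternative as written requires $\int_{S^{d-1}} b(\sigma'\cdot\sigma)\,\dd\sigma' < \infty$, which fails for the non-cutoff kernels that are the paper's main concern; the Schur-type argument does not have this defect, so you are right to prefer it.
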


\begin{proof}
It is well known that the eigenspace corresponding the eigenvalue $\lambda_\ell = \ell (\ell+d-2)$ for $-\Delta$ consists in the restriction to the sphere of $\ell$-homogeneous harmonic polynomials. It is a finite dimensional space of smooth functions.

Since $\B$ and $\Delta$ commute, this finite-dimensional eigenspace is an invariant space by $\B$.

Observe also that both $\Delta$ and $\B$ commute with rotations. Thus, axially symmetric functions (which are those that depend only on $e_1 \cdot \sigma$ on $S^{d-1}$) are invariant by both $\Delta$ and $\B$. We may restrict these two operators to the space of axially symmetric functions and repeat the above analysis. If an axis is chosen arbitrarily, the spherical harmonics of order $\ell$ that are axially symmetric around that axis is a subspace of dimension one (see \cite[Theorem 4.10]{efthimiou2014}). They consist of functions of the form
\[ Y_\ell(\sigma) = a L_\ell(\sigma \cdot e_1),\]
for $a \in \R$ and $L_\ell$ being the Legendre polynomial of degree $\ell$. Since this eigenspace of $\Delta$ has dimension one, and $\B$ commutes with $\Delta$, then it is also a one-dimensional eigenspace for $\B$. Thus, the function $Y_\ell$ above is an eignfunction for $\B$.

It is well known that any generic (not necessarily axially symmetric) spherical harmonic $Y$ of order $\ell$ can be written as a linear combination of rotations of the function $Y_\ell$ above (see \cite[Theorem 4.13]{efthimiou2014}). Since any rotation of $Y_\ell$ is going to be also an eigenfunction for $\Delta$ and $\B$ with the same eigenvalue, then the full eigenspace of spherical harmonics of order $\ell$ are eigenfunctions of $\B$ with the same eigenvalue.
\end{proof}

Let us recall that the eigenvalue $\lambda_\ell$ of $-\Delta$ corresponding to spherical harmonics of order $\ell$ equals $\ell (\ell + d -2)$. Let us write $\tilde \lambda_\ell$ for the corresponding eigenvalue of $-\B$ for spherical harmonics of order $\ell$. Given any generic function $f : S^{d-1} \to \R$, we write its expansion in terms of spherical harmonics.
\[ f = \sum_{\ell=0}^{+\infty} x_\ell Y_\ell.\]
Here, each $Y_\ell$ is a spherical harmonic of order $\ell$. Observe that
\begin{align*}
\int_{S^{d-1}} |\nabla f|^2 \dd \sigma &= \int_{S^{d-1}} (-\Delta f) f \dd \sigma, \\
\iint_{S^{d-1} \times S^{d-1}} (f'-f)^2 b(\sigma'\cdot\sigma) \dd \sigma' \dd \sigma &= 2\int_{S^{d-1}} (-\B f) f \dd \sigma.
\end{align*}
Therefore
\begin{align*}
\int_{S^{d-1}} |\nabla f|^2 \dd \sigma &= \sum_{\ell=0}^{+\infty} \lambda_\ell x_\ell^2,  \\
\iint_{S^{d-1} \times S^{d-1}} (f'-f)^2 b(\sigma'\cdot\sigma) \dd \sigma' \dd \sigma &= \sum_{\ell=0}^{+\infty} 2 \tilde \lambda_\ell x_\ell^2.
\end{align*}

The first term on the right-hand side vanishes, so we may exclude it from our analysis.

Recall that $Y_\ell$ is the restriction to the sphere of a homogeneous harmonic polynomial of degree $\ell$. In particular, $Y_\ell$ is an odd function if $\ell$ is odd. When the function $f$ is even (i.e. it satisfies $f(-\sigma) = f (\sigma)$ for all $\sigma$), then $x_{2\ell+1}=0$ for all $\ell \in \mathbb{N}$. 

The following lemma follows from the discussion above.
\begin{lemma}[First formula for the constant $\cP$] \label{l:C2}
The inequality~\eqref{e:poincare} holds true with
  \begin{equation}
  \label{e:C2}
  \cP = 2 \sup_{\ell=1,2,3,\dots} \frac{\tilde \lambda_{2\ell}}{\lambda_{2\ell}}.
\end{equation}
\end{lemma}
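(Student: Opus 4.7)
The proof is essentially a direct assembly of the spectral facts developed in the paragraphs preceding the lemma, so it should require very little new work; the main idea is just to compare the two quadratic forms on each eigenspace separately.

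The plan is as follows. First, expand the even function $f$ in an $L^2$-orthonormal basis of spherical harmonics adapted to both operators: write $f = \sum_{\ell \geq 0} x_\ell Y_\ell$, where each $Y_\ell$ is a spherical harmonic of order $\ell$. By Lemma~\ref{l:eigenspaces}, each such $Y_\ell$ is simultaneously an eigenfunction of $-\Delta$ with eigenvalue $\lambda_\ell = \ell(\ell+d-2)$ and of $-\B$ with eigenvalue $\tilde\lambda_\ell$. Since $f(-\sigma)=f(\sigma)$ and spherical harmonics of odd order are odd functions on $S^{d-1}$, all coefficients with $\ell$ odd vanish, so only terms with $\ell$ even (and nonzero) contribute nontrivially.

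Next, I would use the two quadratic identities recorded just before the statement. Integration by parts together with $L^2$-orthonormality of $\{Y_\ell\}$ yields
\[
\int_{S^{d-1}} |\nabla f|^2 \dd \sigma = \sum_{\ell \geq 0} \lambda_\ell\, x_\ell^2,
\qquad
\iint_{S^{d-1}\times S^{d-1}} (f'-f)^2 b(\sigma'\cdot\sigma) \dd \sigma'\dd \sigma = 2 \sum_{\ell \geq 0} \tilde\lambda_\ell\, x_\ell^2.
\]
Because $\lambda_0 = \tilde\lambda_0 = 0$ and all odd coefficients vanish, both sums are in fact over $\ell = 2, 4, 6, \dots$.

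Finally, the desired bound reduces to a term-by-term comparison: if $C := 2\sup_{k\geq 1} \tilde\lambda_{2k}/\lambda_{2k}$, then for every $k\geq 1$ one has $2\tilde\lambda_{2k}\, x_{2k}^2 \leq C\, \lambda_{2k}\, x_{2k}^2$, and summing over $k$ gives \eqref{e:poincare} with $\cP = C$. I do not anticipate any real obstacle here — all the work is packed into Lemma~\ref{l:eigenspaces}, which guarantees simultaneous diagonalization, and into the identities above; the conclusion is just the elementary observation that on each joint eigenspace the ratio of the two quadratic forms equals $2\tilde\lambda_{2\ell}/\lambda_{2\ell}$.
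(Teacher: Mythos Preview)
Your proposal is correct and matches the paper's own argument essentially verbatim: the paper records exactly the same spectral expansion, the same two quadratic identities, and the same parity reduction, and then states that the lemma ``follows from the discussion above.'' There is nothing to add.
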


\subsection{An inequality for Legendre polynomials}

In order to get a useful formula for the constant $\cP$ appearing in \eqref{e:poincare}, we are going to use
a pointwise estimate on Legendre polynomials of even order. 

We start with a formula for the eigenvalues $\tilde \lambda_\ell$ of $-\B$.
\begin{lemma} \label{l:tildelambda}
The eigenvalue $\tilde \lambda_\ell$ for $-\B$ corresponding to spherical harmonics of order $\ell$ is given by the formula.
\[ \tilde \lambda_\ell = \int_{S^{d-1}} \left(1- P_\ell(e_1\cdot\sigma') \right) b(e_1\cdot\sigma') \dd \sigma', \]
where $P_\ell$ is the Legendre polynomial of order $\ell$ with the normalization $P_\ell(1) = 1$.
\end{lemma}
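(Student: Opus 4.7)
The plan is to exploit Lemma~\ref{l:eigenspaces}, which says that every spherical harmonic of degree $\ell$ is an eigenfunction of $\B$ with a common eigenvalue $-\tilde\lambda_\ell$. The formula is essentially a statement of the Funk--Hecke principle: since $\B$ is a convolution-type operator on the sphere with zonal kernel $b(\sigma\cdot\sigma')$, its eigenvalues on the $\ell$-th harmonic subspace can be read off by testing against any single nonzero harmonic of degree $\ell$, and the natural choice is the zonal harmonic.

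First, I would pick a distinguished point, say $e_1 \in S^{d-1}$, and consider the zonal spherical harmonic $Y_\ell(\sigma) := P_\ell(e_1 \cdot \sigma)$, where $P_\ell$ is the Legendre polynomial of degree $\ell$ normalized by $P_\ell(1) = 1$. By the proof of Lemma~\ref{l:eigenspaces} (in particular, the part showing that the one-dimensional space of axially symmetric harmonics of degree $\ell$ is $\B$-invariant), $Y_\ell$ is an eigenfunction of $\B$ with eigenvalue $-\tilde\lambda_\ell$:
\[ \B Y_\ell(\sigma) \;=\; -\tilde\lambda_\ell\, Y_\ell(\sigma) \qquad \text{for all } \sigma \in S^{d-1}. \]

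Next I would evaluate this identity at the pole $\sigma = e_1$. On the right-hand side, $Y_\ell(e_1) = P_\ell(1) = 1$, so the right-hand side is simply $-\tilde\lambda_\ell$. On the left-hand side, using the definition \eqref{e:spherical-operator} of $\B$,
\[ \B Y_\ell(e_1) \;=\; \int_{S^{d-1}} \bigl( P_\ell(e_1 \cdot \sigma') - P_\ell(e_1 \cdot e_1) \bigr) \, b(e_1 \cdot \sigma') \dd \sigma' \;=\; \int_{S^{d-1}} \bigl( P_\ell(e_1 \cdot \sigma') - 1 \bigr) b(e_1 \cdot \sigma') \dd \sigma'. \]
Equating the two expressions and multiplying by $-1$ yields the claimed formula.

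There is really no obstacle here; the only thing worth double-checking is the normalization of $P_\ell$. One should note that the axially symmetric spherical harmonics of degree $\ell$ around a fixed axis form a one-dimensional space (as invoked in the proof of Lemma~\ref{l:eigenspaces}), so choosing the representative with value $1$ at the pole pins down the normalization $P_\ell(1)=1$ and makes the evaluation at $e_1$ trivial. Independence of the formula from the choice of $e_1$ follows from rotation invariance of $\B$ (equivalently, from the fact that $b$ depends only on $\sigma\cdot\sigma'$).
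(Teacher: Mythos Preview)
Your proof is correct and follows essentially the same approach as the paper: pick the zonal spherical harmonic $Y_\ell(\sigma)=P_\ell(e_1\cdot\sigma)$, use that it is an eigenfunction of $\B$ (via Lemma~\ref{l:eigenspaces}), and evaluate $\B Y_\ell$ at the pole $e_1$ where $Y_\ell(e_1)=1$. The paper's version is more terse but the argument is identical.
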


\begin{proof}
We recall that the unique axially symmetric spherical harmonic $Y_\ell$ of order $\ell$ such that $Y_\ell(e_1) = 1$ is given by
$$Y_\ell (\sigma) = P_\ell (e_1 \cdot \sigma).$$
Therefore, for this function $Y_\ell$, we have $-\Delta Y_\ell(e_1) = \lambda_\ell$ and
\begin{align*}
\tilde \lambda_\ell &= -\B Y_\ell(e_1) \\
&= \int_{S^{d-1}} \left(1- P_\ell(e_1\cdot\sigma') \right) b(e_1\cdot\sigma') \dd \sigma'. \qedhere
\end{align*}
\end{proof}

\begin{prop}[An inequality for Legendre polynomials] \label{p:legendre}
Given the Legendre polynomials $P_{\ell}$ normalized so that $P_\ell(1)=1$, we have for all $x \in [-1,1]$ and $\ell \geq 1$,
\begin{equation} \label{e:aim-Legendre}
1-P_{2\ell}(x) \leq \frac{\lambda_{2\ell}}{\lambda_2} (1-P_2(x)).
\end{equation}
Here $\lambda_\ell = \ell(d+\ell-2)$ is the eigenvalue of the spherical Laplacian corresponding to $Y_\ell(\sigma) := P_\ell(e_1 \cdot \sigma)$.
\end{prop}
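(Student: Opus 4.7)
The plan is to reduce the inequality to a cleaner equivalent form and then prove it via the classical Laplace representation of the zonal spherical harmonic, combined with two elementary inequalities.

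First, the reduction. A direct computation gives $P_2(x) = (dx^2-1)/(d-1)$, hence $1 - P_2(x) = \frac{d}{d-1}(1-x^2)$. Since $\lambda_2 = 2d$, the target inequality is equivalent to
\[ 1 - P_{2\ell}(x) \leq \frac{\lambda_{2\ell}}{2(d-1)} \, (1-x^2). \]
Both sides vanish at $x = \pm 1$, and evaluating the Gegenbauer ODE at $x=1$ yields $P_{2\ell}'(1) = \lambda_{2\ell}/(d-1)$, so the inequality is tight at the endpoints.

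For the main argument, I will use the Laplace integral representation
\[ P_\ell(\cos\theta) = \int_{S^{d-2}} (\cos\theta + i \sin\theta \, \omega \cdot e)^\ell \, \dd \mu(\omega), \]
where $\mu$ denotes the uniform probability measure on $S^{d-2}$ and $e \in S^{d-2}$ is fixed. This follows because $(x_1 + i x' \cdot \omega)^\ell$ is a homogeneous harmonic polynomial on $\R^d$ for $|\omega| = 1$ (its Laplacian vanishes as $1 + (i\omega)\cdot(i\omega) = 0$), and averaging produces the unique axially symmetric spherical harmonic of degree $\ell$ with value $1$ on the axis. Setting $T = \omega \cdot e$, the variable $T$ is symmetric on $[-1,1]$ with second moment $\mathbb{E}[T^2] = 1/(d-1)$ (the variance of one coordinate of a uniform point on $S^{d-2}$).

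Writing $Z = \cos\theta + i \sin\theta \, T = r e^{i\phi}$ with $r^2 = \cos^2\theta + T^2 \sin^2\theta \leq 1$ and $r^2 \sin^2 \phi = T^2 \sin^2\theta$, the identity $1 - \operatorname{Re}(Z^{2\ell}) = (1 - r^{2\ell}) + 2 r^{2\ell} \sin^2(\ell \phi)$, combined with $1 - (1-u)^\ell \leq \ell u$ (applied to $u = 1-r^2 = (1-T^2)\sin^2\theta$) and $|\sin(\ell\phi)| \leq \ell |\sin\phi|$ together with $r \leq 1$, yields the pointwise bound
\[ 1 - \operatorname{Re}(Z^{2\ell}) \leq \ell \sin^2 \theta \, \bigl[1 + (2\ell - 1) T^2\bigr]. \]
Taking expectation with $\mathbb{E}[T^2] = 1/(d-1)$ produces $\frac{\ell(2\ell + d - 2)}{d-1} \sin^2\theta = \frac{\lambda_{2\ell}}{2(d-1)} \sin^2\theta$, which is the required bound. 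The argument is essentially elementary once the Laplace representation is in hand; the only step requiring care is verifying this representation with the correct normalization. For $d = 2$ it degenerates cleanly ($T \in \{\pm 1\}$ uniformly) and recovers the familiar $|\sin(\ell\theta)| \leq \ell |\sin\theta|$.
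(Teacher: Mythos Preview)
Your proof is correct and follows essentially the same approach as the paper: both use the Laplace integral representation of $P_\ell$, the complex identity $1-\operatorname{Re}(Z^{2\ell}) = (1-|Z|^{2\ell}) + 2(\operatorname{Im} Z^\ell)^2$, the bound $|\sin(\ell\phi)|\le \ell|\sin\phi|$ (the paper's Lemma~\ref{l:1} in polar form), and the convexity estimate $1-(1-u)^\ell\le \ell u$. Your packaging via the uniform probability measure on $S^{d-2}$ with $\mathbb{E}[T^2]=1/(d-1)$ is a bit tidier and handles $d=2$ uniformly, but the substance is the same.
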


Legendre polynomials $P_\ell(x)$ correspond to homogeneous harmonic functions in $\R^d$ which are invariant by rotations over the $e_1$ axis. In 2D, these homogeneous harmonic functions can easily be computed using complex analysis as the real part of $(x+iy)^\ell$. In dimension $d>2$, we may start with one of these 2D profiles written in terms of $x_1$ and $x_2$, and average over all possible rotations over the $e_1$ axis. We are left with the following formula that appears in \cite[Theorem 4.26]{efthimiou2014}.

\begin{lemma}[{Integral representation of Legendre polynomials --\cite[Theorem 4.26]{efthimiou2014}}] \label{l:2Daverages}
Assuming $d \geq 3$, the following equality holds
\[ P_\ell(x) = \frac{\omega_{d-3}}{\omega_{d-2}} \int_{-1}^1 \left( x + is\sqrt{1-x^2} \right)^\ell (1-s^2)^{\frac{d-4}2} \dd s.\]
\end{lemma}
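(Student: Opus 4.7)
The plan is to construct the right‑hand side as an axially symmetric spherical harmonic of degree $\ell$ and then invoke uniqueness of such harmonics (up to scaling) together with the correct normalization at the pole.

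First I would exhibit a harmonic polynomial on $\R^d$ of degree $\ell$ by taking $H(z) := (z_1 + i z_2)^{\ell}$, where $z = (z_1,\dots,z_d)\in\R^d$. Since $H$ depends only on the first two coordinates and is harmonic as a function on $\R^2$, it is also harmonic on $\R^d$, so its restriction to $S^{d-1}$ is a spherical harmonic of degree $\ell$. This restricted function is not axially symmetric about the $e_1$‑axis, but averaging over the stabilizer $O(d-1)$ of $e_1$ produces an axially symmetric spherical harmonic of the same degree. Parametrize $\sigma\in S^{d-1}$ by $\sigma = x\,e_1 + \sqrt{1-x^2}\,\omega$ with $\omega \in S^{d-2}\subset e_1^{\perp}$; then the restriction is $H(\sigma) = (x + i\sqrt{1-x^2}\,\omega_1)^{\ell}$, where $\omega_1$ denotes the first component of $\omega$ inside $e_1^{\perp}$. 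The $O(d-1)$‑average becomes
\[
Q_\ell(x) := \frac{1}{\omega_{d-2}} \int_{S^{d-2}} \bigl( x + i\sqrt{1-x^2}\,\omega_1 \bigr)^{\ell}\, \dd\omega.
\]

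Next I would compute this $S^{d-2}$‑integral by slicing along the first coordinate. Writing $s = \omega_1 \in [-1,1]$ and using the standard coarea formula on $S^{d-2}$ (the slice $\{\omega_1 = s\}$ is a $(d-3)$‑sphere of radius $\sqrt{1-s^2}$ and contributes a Jacobian $(1-s^2)^{(d-4)/2}$), I get
\[
\int_{S^{d-2}} f(\omega_1)\, \dd\omega = \omega_{d-3} \int_{-1}^{1} f(s)\, (1-s^2)^{(d-4)/2}\, \dd s,
\]
so that
\[
Q_\ell(x) = \frac{\omega_{d-3}}{\omega_{d-2}} \int_{-1}^{1} \bigl( x + i s\sqrt{1-x^2} \bigr)^{\ell} (1-s^2)^{(d-4)/2}\, \dd s.
\]

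Finally I would invoke uniqueness. Since $Q_\ell$ is a degree‑$\ell$ spherical harmonic that is axially symmetric about $e_1$, and the space of such harmonics is one‑dimensional (\cite[Theorem 4.10]{efthimiou2014}, quoted in the proof of Lemma~\ref{l:eigenspaces}), one has $Q_\ell(x) = Q_\ell(1)\, P_\ell(x)$ with $P_\ell$ normalized so that $P_\ell(1)=1$. Evaluating at $x = 1$ gives
\[
Q_\ell(1) = \frac{\omega_{d-3}}{\omega_{d-2}} \int_{-1}^{1} (1-s^2)^{(d-4)/2}\, \dd s = 1,
\]
where the last equality follows from the identity $\omega_{d-2} = \omega_{d-3} \int_{-1}^{1} (1-s^2)^{(d-4)/2}\,\dd s$ (itself a consequence of the same slicing formula applied to the constant function~$1$ on $S^{d-2}$). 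Hence $P_\ell = Q_\ell$, yielding the claimed identity. The mildly delicate point is keeping track of the slicing Jacobian and verifying the normalization constant cleanly; everything else is routine linear algebra of spherical harmonics.
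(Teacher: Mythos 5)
Your proof is correct and fills in exactly the argument the paper sketches just before the lemma: take the planar harmonic $(z_1+iz_2)^{\ell}$, average it over the $O(d-1)$-stabilizer of $e_1$ (which the coarea slicing of $S^{d-2}$ turns into the $(1-s^2)^{(d-4)/2}\,\dd s$ integral), and identify the result with $P_\ell$ via one-dimensionality of axially symmetric degree-$\ell$ harmonics and the normalization at the pole. The only cosmetic difference is that you keep the complex-valued integrand rather than taking the real part from the outset, which is harmless since the imaginary part cancels under $s\mapsto -s$ — a point the paper itself notes right after the lemma.
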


It is useful to understand the weight in the integral. For any $p > 1$, the following identity holds,
$$ \omega_{p-1} \int_{-1}^1 (1-s^2)^{\frac{p-2}2} \dd s = \omega_{p}. $$
Here, $\omega_p$ is the surface area of $S^{p}$. In fact, all the quantities involved are computable.
We recall that (see for instance \cite[Proposition~2.3]{efthimiou2014}),
$$ \omega_p = \frac{2 \pi^{d/2}}{\Gamma(d/2)}. $$
In particular,
$$ \int_{-1}^1 (1-s^2)^{\frac{p-2}2} \dd s = \sqrt \pi \frac{\Gamma(p/2)}{\Gamma((p+1)/2)}.$$

It is easy to verify that the right hand side of Lemma \ref{l:2Daverages} is a real number by averaging the values of the integral at $s$ and $-s$.

Observe that we have $P_\ell(1) = 1$. In view of this, we write
\begin{equation} \label{e:1-P}
1-P_\ell(x) = \frac{\omega_{d-3}}{\omega_{d-2}} \int_{-1}^1 \left(1-\left( x + is\sqrt{1-x^2} \right)^\ell\right) (1-s^2)^{\frac{d-4}2} \dd s.
\end{equation}

\begin{proof}[Proof of Proposition~\ref{p:legendre}]
  The proof proceeds in several steps.

\paragraph{\textsc{Step 1.}}
  We first compute explicitely the right hand side of \eqref{e:aim-Legendre}. 
The polynomial $P_2$ can be computed explicitly,
\[ P_2(x) = \frac{d}{d-1} x^2 - \frac 1 {d-1}.\]
We quickly justify this classical fact. The function $Y_2$ is the restriction to the sphere of the unique homogeneous quadratic harmonic polynomial that is invariant by any rotation fixing $e_1$ and $Y_2(e_1) = 1$. Thus, it has to be $Y_2(x) = x_1^2 - (x_2^2 + \dots +x_d^2)/(d-1)$, and then $P_2(x) = x^2 + (1-x^2) / (d-1)$. In particular,
\begin{equation}\label{e:P2}
  1-P_2(x) = \frac{d}{d-1} (1-x^2).
\end{equation}

Since $\lambda_2 = 2d$, the inequality we want to prove reduces to
\begin{equation} \label{e:aim2}
1-P_{2\ell}(x) \leq \frac{\lambda_{2\ell}}{2(d-1)} (1-x^2) = \frac{\ell (d-2+2\ell)} {(d-1)} (1-x^2).
\end{equation}
Our plan is to use \eqref{e:1-P} to prove \eqref{e:aim2}. To get there, we need to prove a series of lemmas first.
\bigskip

\paragraph{\textsc{Step 2. The 2D case.}}

When $d=2$, the polynomials $P_\ell(x)$ correspond to $\cos(\ell \theta)$ if $x = \cos \theta$. The inequality \eqref{e:aim-Legendre} that we want to prove takes the form
\[ \ell^2 \geq \frac{1-\cos(2\ell \theta)}{1-\cos(2\theta)}. \]

This inequality can be proved relatively easily using trigonometric identities and induction. We observe that $1-\cos(2\theta) = 2 \sin(\theta)^2$. Therefore
\[ \frac{1-\cos(2\ell \theta)}{1-\cos(2\theta)} = \frac{\sin(\ell \theta)^2}{\sin(\theta)^2}. \]

The following inequality is easy to prove using induction on $\ell$ and the formulas for the sine of a sum of the angles.
\[ \frac{|\sin(\ell \theta)|}{|\sin(\theta)|} \leq \ell. \]
Indeed, the inequality is obvious for $\ell=1$. Once we know it for any value of $\ell$, we use the formula for the sine of sums to get
\begin{align*} 
|\sin((\ell+1) \theta)| &= |\sin((\ell) \theta) \cos(\theta) + \cos((\ell) \theta) \sin(\theta)| \\
&\leq |\sin((\ell) \theta)| + |\sin(\theta)| \\
&\leq (\ell+1) |\sin(\theta)|.
\end{align*}

These two steps that are used to prove the inequality in 2D correspond to the first two lemmas that are presented in the general case. Without the 2D case, their statements would look admittedly mysterious.
\bigskip

\paragraph{\textsc{Step 3. Dimension greater than 2.}}

\begin{lemma} \label{l:2}
For any $\ell = 1,2,3,\dots$, the following identity holds for $x,s \in [-1,1]$,
\[ \Re \left(1-\left( x + is\sqrt{1-x^2} \right)^{2\ell}\right) = 2 \left\{ \Im \left( x + is \sqrt{1-x^2} \right)^\ell \right\}^2 + 1-\left( x^2 + s^2 (1-x^2) \right)^\ell. \]
\end{lemma}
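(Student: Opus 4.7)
The plan is to recognize this as a purely algebraic identity about a single complex number, independent of the specific parametrization by $x$ and $s$. Set $z = x + is\sqrt{1-x^2}$ and write $w = z^\ell$, with $w = A + iB$ for $A = \Re(z^\ell)$ and $B = \Im(z^\ell)$.

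First I would observe that $|z|^2 = x^2 + s^2(1-x^2)$, so $|w|^2 = |z|^{2\ell} = (x^2 + s^2(1-x^2))^\ell = A^2 + B^2$. Next, since $z^{2\ell} = w^2 = (A^2 - B^2) + 2iAB$, we have $\Re(z^{2\ell}) = A^2 - B^2$. Combining these two identities gives
\[ \Re(z^{2\ell}) \;=\; A^2 - B^2 \;=\; (A^2 + B^2) - 2B^2 \;=\; (x^2 + s^2(1-x^2))^\ell - 2\{\Im(z^\ell)\}^2. \]
Subtracting both sides from $1$ yields exactly the claimed identity.

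So the entire content is the elementary relation $\Re(w^2) = |w|^2 - 2(\Im w)^2$, applied to $w = z^\ell$. There is no real obstacle here: the proof is a two-line computation, and the hypotheses $x, s \in [-1,1]$ are not even used (they only serve to ensure $z$ is well-defined with $\sqrt{1-x^2} \in \R$, and to place the lemma in the context needed for the subsequent steps in the proof of Proposition~\ref{p:legendre}).
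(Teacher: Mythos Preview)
Your proof is correct and is essentially identical to the paper's: both reduce the statement to the elementary identity $1-\Re(w^2)=1-|w|^2+2(\Im w)^2$ applied to $w=(x+is\sqrt{1-x^2})^\ell$, together with $|w|^2=(x^2+s^2(1-x^2))^\ell$. The only difference is notational (the paper names this complex number $z$ rather than $w$).
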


\begin{proof}
For any complex number $z = a+ib$, we observe that
\begin{align*}
1 - \Re z^2 = 1 - a^2 + b^2 = 1-|z|^2 + 2b^2.
\end{align*}

If we set $z = \left( x + is\sqrt{1-x^2} \right)^{\ell}$, we conclude the statement of the lemma.
\end{proof}

\begin{lemma} \label{l:1}
The following inequality holds for all $\ell = 1,2,3,\dots$ and $x,s \in [-1,1]$,
\[ \left\vert \Im \left( x + is \sqrt{1-x^2} \right)^\ell \right\vert \leq \ell |s| \sqrt{1-x^2}.\]
\end{lemma}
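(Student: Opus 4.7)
Set $z = x + is\sqrt{1-x^2}$. The key observation is that $z$ lies in the closed unit disk: indeed
\[ |z|^2 = x^2 + s^2(1-x^2) \leq x^2 + (1-x^2) = 1, \]
since $|s|\leq 1$. Writing $z = r e^{i\theta}$ with $r = |z| \in [0,1]$, we have $\Im z = r \sin\theta = s\sqrt{1-x^2}$ and $\Im z^\ell = r^\ell \sin(\ell\theta)$.

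The plan is then to combine two classical facts. First, the elementary inequality $|\sin(\ell\theta)|\leq \ell|\sin\theta|$ for integer $\ell\geq 1$ (which can be proved by induction on $\ell$ using $\sin((\ell+1)\theta) = \sin(\ell\theta)\cos\theta + \cos(\ell\theta)\sin\theta$, exactly as in Step~2 above for the two-dimensional case). Second, the bound $r\leq 1$. Together these yield
\[ |\Im z^\ell| = r^\ell |\sin(\ell\theta)| \leq r^\ell \cdot \ell |\sin\theta| = \ell\, r^{\ell-1} |s|\sqrt{1-x^2} \leq \ell |s|\sqrt{1-x^2}, \]
which is exactly the desired inequality.

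\textbf{Alternative by direct induction.} If one prefers to avoid polar coordinates, the same estimate follows by induction on $\ell$. Writing $z^\ell = A_\ell + i B_\ell$, the recursion $z^{\ell+1} = z\cdot z^\ell$ gives $B_{\ell+1} = x B_\ell + s\sqrt{1-x^2}\, A_\ell$. Using the induction hypothesis $|B_\ell|\leq \ell|s|\sqrt{1-x^2}$ together with $|A_\ell|\leq |z|^\ell \leq 1$ and $|x|\leq 1$, one obtains $|B_{\ell+1}|\leq (\ell|x|+1)|s|\sqrt{1-x^2}\leq (\ell+1)|s|\sqrt{1-x^2}$.

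\textbf{Main obstacle.} There is essentially no obstacle here: the lemma is a short elementary inequality once one notices that $z$ lies in the unit disk and that the identity $\Im z = s\sqrt{1-x^2}$ reduces the statement to the well-known estimate $|\sin(\ell\theta)|\leq \ell|\sin\theta|$. The only thing to be careful about is that $r\leq 1$, which is used crucially both to pass from $|z|^\ell$-weighted estimates to unweighted ones and to make the induction go through.
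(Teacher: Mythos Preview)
Your proposal is correct, and your ``alternative by direct induction'' is exactly the paper's proof: write $z^{\ell+1}=z\cdot z^\ell$, bound $|x|\leq 1$ and $|\Re z^\ell|\leq |z|^\ell\leq 1$, and conclude by induction. Your polar-coordinates argument is a minor repackaging that reduces the general case to the $|\sin(\ell\theta)|\leq\ell|\sin\theta|$ inequality already established in Step~2; this is a pleasant observation but not a genuinely different route.
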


\begin{proof}
We prove it by induction. It is a trivial identity for $\ell = 1$. Then, we observe
\begin{align*}
\left\vert \Im \left( x + is \sqrt{1-x^2} \right)^{\ell+1} \right\vert &= \left\vert \Im \left( x + is \sqrt{1-x^2} \right)^\ell x + \Re \left( x + is \sqrt{1-x^2} \right)^\ell s \sqrt{1-x^2} \right\vert \\
&\leq \left\vert \Im \left( x + is \sqrt{1-x^2} \right)^\ell \right\vert |x| + \left\vert \Re \left( x + is \sqrt{1-x^2} \right)^\ell \right\vert |s| \sqrt{1-x^2} \\
&\leq \left\vert \Im \left( x + is \sqrt{1-x^2} \right)^\ell \right\vert + |s| \sqrt{1-x^2} \\
\intertext{and then we use the inductive hypothesis,}
&\leq (\ell+1)|s|\sqrt{1-x^2}. \qedhere
\end{align*}
\end{proof}

Combining Lemmas \ref{l:1} and \ref{l:2}, we obtain
\[ \Re \left(1-\left( x + is\sqrt{1-x^2} \right)^{2\ell}\right) \leq 2\ell^2 s^2 (1-x^2) + 1-\left( x^2 + s^2 (1-x^2) \right)^\ell. \]
Using the convexity of the function $(1-z)^\ell$ for $z \in (0,1)$, we have
  \[ 1 - (1-z)^\ell \leq \ell z .\]
  In particular, if we pick $z = (1-s^2) (1-x^2)$, we have
  \[ 1-\left( x^2 + s^2 (1-x^2) \right)^\ell \le \ell (1-s^2)(1-x^2).\]
  We thus get
\begin{align*} 
\Re \left(1-\left( x + is\sqrt{1-x^2} \right)^{2\ell}\right) &\leq 2\ell^2 s^2 (1-x^2) + (1-x^2) \ell (1-s^2) \\
&= (1-x^2) \ell (1-s^2 +2 \ell s^2).
\end{align*}

Finally, we integrate in $s$ to prove \eqref{e:aim2} using \eqref{e:1-P}.
\begin{align*}
1-P_{2\ell}(x) &= \Re \frac{\omega_{d-3}}{\omega_{d-2}} \int_{-1}^1 \left(1-\left( x + is\sqrt{1-x^2} \right)^{2\ell} \right) (1-s^2)^{\frac{d-4}2} \dd s \\
&\leq \frac{\omega_{d-3}}{\omega_{d-2}} \int_{-1}^1 (1-x^2) \ell (1-s^2+2\ell s^2) (1-s^2)^{\frac{d-4}2} \dd s \\
&= \frac{\omega_{d-3}}{\omega_{d-2}} \ell (1-x^2) \int_{-1}^1 \left\{ (1-2\ell) (1-s^2)^{\frac{d-2}2} + 2 \ell (1-s^2)^{\frac{d-4}2} \right\} \dd s \\
&= \frac{\omega_{d-3}}{\omega_{d-2}} \ell (1-x^2) \left( (1-2\ell) \sqrt \pi \frac{\Gamma(d/2)}{\Gamma((d+1)/2)} + 2 \ell \sqrt \pi \frac{\Gamma(d/2-1)}{\Gamma((d+1)/2-1)} \right) \\
&= \ell \left( (1-2\ell) \frac{d-2}{d-1} + 2 \ell \right) \ (1-x^2) \\
&= \frac{\ell (d-2+2\ell)} {(d-1)} (1-x^2).
\end{align*}

And that is the inequality we wanted to prove.
\end{proof}

\begin{cor}[Second formula for $\cP$]\label{c:C2}
  For $\ell=1,2,3,\dots$, we have $\frac{\tilde \lambda_{2\ell}}{\lambda_{2\ell}} \le \frac{\tilde \lambda_2}{\lambda_2}$. In other words, $\cP$ defined by \eqref{e:C2} equals  $2\frac{\tilde \lambda_2}{\lambda_2}$.  
\end{cor}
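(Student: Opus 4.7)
The plan is to deduce Corollary \ref{c:C2} as an almost immediate consequence of Proposition \ref{p:legendre} combined with Lemma \ref{l:tildelambda}, plus the spectral formula for $\cP$ in Lemma \ref{l:C2}. There is essentially no further obstacle, because all the hard work is already done in the proof of the pointwise Legendre inequality \eqref{e:aim-Legendre}.

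First I would apply Proposition \ref{p:legendre} pointwise with $x = e_1 \cdot \sigma'$, which gives for every $\ell \geq 1$ and every $\sigma' \in S^{d-1}$
\[
1 - P_{2\ell}(e_1 \cdot \sigma') \;\leq\; \frac{\lambda_{2\ell}}{\lambda_2}\bigl( 1 - P_2(e_1 \cdot \sigma') \bigr).
\]
Multiplying by the nonnegative kernel $b(e_1 \cdot \sigma')$ and integrating over $\sigma' \in S^{d-1}$ preserves the inequality. Using the integral formula for the eigenvalues of $-\B$ provided by Lemma \ref{l:tildelambda}, both sides become eigenvalues of $-\B$ and we obtain
\[
\tilde\lambda_{2\ell} \;\leq\; \frac{\lambda_{2\ell}}{\lambda_2}\,\tilde\lambda_2,
\]
equivalently $\tilde\lambda_{2\ell}/\lambda_{2\ell} \leq \tilde\lambda_2/\lambda_2$ for every $\ell \geq 1$. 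Since the case $\ell = 1$ is an equality, the supremum in \eqref{e:C2} is attained at $\ell = 1$.

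Finally, plugging this into the formula \eqref{e:C2} of Lemma \ref{l:C2} yields
\[
\cP \;=\; 2\sup_{\ell \geq 1}\frac{\tilde\lambda_{2\ell}}{\lambda_{2\ell}} \;=\; 2\,\frac{\tilde\lambda_2}{\lambda_2},
\]
which is the desired identity. The whole argument is one short chain of implications, so there is no real obstacle here; all the substance lies in Proposition \ref{p:legendre}, which has already been established.
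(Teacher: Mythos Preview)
Your proof is correct and follows essentially the same approach as the paper: apply the pointwise Legendre inequality from Proposition~\ref{p:legendre}, integrate against $b$, and invoke Lemma~\ref{l:tildelambda} to recognize the resulting integrals as $\tilde\lambda_{2\ell}$ and $\tilde\lambda_2$. Your explicit remark that equality holds at $\ell=1$, so the supremum in \eqref{e:C2} is attained there, is a clean way to close the argument.
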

\begin{proof}
In view of Lemma~\ref{l:tildelambda}, we can use Proposition~\ref{p:legendre} to estimate $\tilde{\lambda}_{2\ell}$ from above, 
\begin{align*}
  \tilde{\lambda}_{2\ell} & = \int_{S^{d-1}} (1 - P_{2\ell} (e_1 \cdot \sigma')) b(e_1 \cdot \sigma') \dd \sigma' \\
  & \le \frac{\lambda_{2\ell}}{\lambda_2} \int_{S^{d-1}} (1 - P_{2} (e_1 \cdot \sigma')) b(e_1 \cdot \sigma') \dd \sigma' \\
  & = \frac{\lambda_{2\ell}}{\lambda_2} \tilde \lambda_2
\end{align*}
where we used Lemma~\ref{l:C2} to get the last line. 
\end{proof}

We are now ready to prove Proposition~\ref{p:poincare}.
\begin{proof}[Proof of Proposition~\ref{p:poincare}]
We combine Lemmas~\ref{l:C2} with Corollary~\ref{c:C2} and get that \eqref{e:poincare} holds with $\cP = 2\frac{\tilde \lambda_2}{\lambda_2}$.

It is well known that $\lambda_2 = 2d$. To compute $\tilde \lambda_2$, we use Lemma \ref{l:tildelambda} and \eqref{e:P2},
\begin{align*}
\tilde \lambda_2 &= \int_{S^{d-1}} \left(1- P_2(e_1\cdot\sigma') \right) b(e_1\cdot\sigma') \dd \sigma', \\
&=  \int_{S^{d-1}} \frac d {d-1} \left( 1- (e_1\cdot\sigma')^2 \right) b(e_1\cdot\sigma') \dd \sigma'.
\end{align*}
And the formula for $\cP = 2\frac{\tilde \lambda_2}{\lambda_2}$ follows.
\end{proof}

\subsection{The key inequality on the sphere for large dimensions}

\begin{prop}[The key inequality on the sphere in large dimensions] \label{p:spherical-inequality-from-curvature}
Assume $d>2$. For any kernel $b$, the inequality \eqref{e:spherical-inequality} holds with \( \Lambda_b \geq  d-2. \)
\end{prop}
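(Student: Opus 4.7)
The plan is to combine the three inequalities introduced in Section~\ref{s:generic}, namely \eqref{e:DtoFisher}, \eqref{e:H1toNonlocal}, and \eqref{e:squareroots}, via Lemma~\ref{l:three-inequalities}, which yields $\Lambda_b \geq 2\cOne/\cP$ provided \eqref{e:DtoFisher} holds with constant $\cOne$ and \eqref{e:H1toNonlocal} holds with constant $\cP$. The key point for us is that both constants have already been computed explicitly in terms of the same integral of $b$ against $(1-(e\cdot\sigma)^2)$, so the ratio collapses to something independent of $b$.

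First, since $d > 2$, Corollary~\ref{c:curvature-lower-bound} provides \eqref{e:DtoFisher} with the explicit constant
\[ \cOne = \frac{d-2}{2(d-1)} \int_{S^{d-1}} (1-(\sigma'\cdot e)^2) b(\sigma'\cdot e) \dd \sigma'. \]
Second, the Poincaré-type inequality of Proposition~\ref{p:poincare} (valid for any dimension $\geq 2$ and any kernel, applied to even functions) provides \eqref{e:H1toNonlocal} with
\[ \cP = \frac{1}{d-1} \int_{S^{d-1}} (1-(\sigma \cdot e)^2) b(e\cdot \sigma) \dd \sigma. \]
Note both expressions use the same integral of $b$ against $(1-(\sigma\cdot e)^2)$, which by the remark following \eqref{e:generic-b} is finite and independent of the choice of $e \in S^{d-1}$.

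Finally, I would apply Lemma~\ref{l:three-inequalities} to deduce that \eqref{e:spherical-inequality} holds for $b$ with
\[ \Lambda_b \geq \frac{2\cOne}{\cP} = \frac{2 \cdot \frac{d-2}{2(d-1)}}{\frac{1}{d-1}} = d-2, \]
since the $b$-dependent integrals cancel. This completes the proof.

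There is essentially no obstacle here: all the hard work has been done in Sections~\ref{s:curvature} (where curvature of the sphere enters through the decomposition $|y-x|^2_{\sigma',\sigma} \geq (1-(\sigma'\cdot\sigma)^2)|\Pi_\perp x|^2$) and \ref{s:poincare} (where the Legendre polynomial inequality $1-P_{2\ell}(x) \leq (\lambda_{2\ell}/\lambda_2)(1-P_2(x))$ reduces the spectral ratio to its value at $\ell=1$). The cleanness of the constant $d-2$ comes from the fortunate circumstance that the normalizing factors $\frac{d-2}{2(d-1)}$ from the curvature estimate and $\frac{1}{d-1}$ from the Poincaré constant are precisely matched up to the factor $d-2$, and the integral of $b$ cancels exactly.
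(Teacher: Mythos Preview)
Your proposal is correct and follows essentially the same approach as the paper's own proof: invoke Corollary~\ref{c:curvature-lower-bound} for $\cOne$, Proposition~\ref{p:poincare} for $\cP$, and then Lemma~\ref{l:three-inequalities} to obtain $\Lambda_b \geq 2\cOne/\cP = d-2$ after the $b$-dependent integrals cancel.
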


\begin{proof}
Corollary \ref{c:curvature-lower-bound} establishes the inequality \eqref{e:DtoFisher} for 
\[ \cOne = \frac{d-2}{2(d-1)} \int_{S^{d-1}} (1 - (\sigma' \cdot e)^2) b(\sigma'\cdot e) \dd \sigma'.\]
Proposition~\ref{p:poincare} establishes the inequality \eqref{e:H1toNonlocal} for 
\[ \cP = \frac 1 {d-1} \int_{S^{d-1}} (1 - (\sigma \cdot e)^2)   b (e \cdot \sigma) \dd \sigma. \]
Therefore, we prove \eqref{e:spherical-inequality} using Lemma \ref{l:three-inequalities} with 
\[ \Lambda_b = 2 \cOne/\cP = d-2. \qedhere\]
\end{proof}

\section{Example of a singular collision kernel}
\label{s:counterexample}

In this section, we show an example of a collision kernel $b$ for $d=2$ so that the inequality \eqref{e:spherical-inequality} does not hold for any $\Lambda_b > 0$. 

The example consists in a very singular kernel $b$ which is a Dirac delta concentrated at the deviation angle $\pi/2$. We will see that for this collision kernel the inequality \eqref{e:spherical-inequality} can only hold with $\Lambda_b = 0$.
\begin{lemma}[A counter-example in 2D]\label{l:counter-example}
For a given $\Lambda>0$ arbitrarily large, there exists a $\pi$-periodic function $f$ such that the following inequality is false, 
\begin{equation}\label{e:counter-example}
  \begin{aligned}
    \int_{[0,2\pi]} &|\partial_\theta \log f(\theta+\pi/2) - \partial_\theta \log f(\theta)|^2 (f(\theta+\pi/2)+f(\theta)) \dd \theta  \\ \qquad &\geq \Lambda \int_{[0,2\pi]} \frac{(f(\theta+\pi/2) - f(\theta))^2} {f(\theta+\pi/2)+f(\theta)}  \dd \theta .
 \end{aligned}
\end{equation}
\end{lemma}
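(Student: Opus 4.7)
The plan is to reduce \eqref{e:counter-example} to a scalar weighted inequality on the circle via a nonlinear substitution, and then concentrate the weight away from where the substituted variable varies.

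For $f>0$ that is $\pi$-periodic, introduce
$$ w(\theta):=\tfrac12\bigl(f(\theta)+f(\theta+\pi/2)\bigr),\qquad S(\theta):=\tfrac12\log\bigl(f(\theta+\pi/2)/f(\theta)\bigr). $$
The periodicity of $f$ forces $S$ to be $\pi$-periodic with $S(\theta+\pi/2)=-S(\theta)$, and $w>0$ to be $\pi/2$-periodic; conversely, any such pair determines a legitimate $f=2w/(1+e^{2S})$, so this correspondence is a bijection. Using $\partial_\theta\log f(\theta+\pi/2)-\partial_\theta\log f(\theta)=2S'$ together with the identities $f(\theta)+f(\theta+\pi/2)=2w$ and $(f(\theta+\pi/2)-f(\theta))/(f(\theta+\pi/2)+f(\theta))=\tanh S$, the inequality \eqref{e:counter-example} rewrites equivalently as
$$
(\star)\qquad 4\int_0^{2\pi}(S'(\theta))^2\,w(\theta)\,\dd\theta \;\geq\; \Lambda\int_0^{2\pi}\tanh^2(S(\theta))\,w(\theta)\,\dd\theta.
$$

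Given $\Lambda>0$, the strategy is to construct $(S,w)$ violating $(\star)$. Fix $A$ large enough that $\tanh^2 A$ is close to $1$ (for instance $A=2$) and a small transition width $\delta$. Take a smooth $\pi$-periodic $\phi:\R\to[-1,1]$ satisfying $\phi(\theta+\pi/2)=-\phi(\theta)$, with $\phi\equiv+1$ on $[\delta,\pi/2-\delta]$ and $\phi\equiv-1$ on $[\pi/2+\delta,\pi-\delta]$, and set $S:=A\phi$. Choose $w$ smooth and $\pi/2$-periodic, equal to $1$ on $[\delta,\pi/2-\delta]$ (mod $\pi/2$) and equal to a small parameter $\eta>0$ on a window of width comparable to $\delta$ around $0$ (mod $\pi/2$) covering the transition windows of $\phi$. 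Both symmetry constraints hold, so this determines an admissible $f>0$.

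Since $S'=A\phi'$ is supported in the transition windows, where $|\phi'|\leq C_0/\delta$ and $w=\eta$, the left-hand side of $(\star)$ is bounded by $C A^2\eta/\delta$ for an absolute constant $C$ depending only on the profile of $\phi$. On the plateaus, $\tanh^2 S=\tanh^2 A$ and $w=1$, so the right-hand side of $(\star)$ is at least $(2\pi-8\delta)\Lambda\tanh^2(A)$, which is strictly positive. Holding $A$ and $\delta$ fixed and letting $\eta\to 0$ drives the left-hand side of $(\star)$ to zero while the right-hand side stays bounded below, so for $\eta$ small enough depending on $\Lambda$, inequality $(\star)$---and hence \eqref{e:counter-example}---fails. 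The most delicate step is the algebraic reduction to $(\star)$ and the verification that every $(S,w)$ with the stated symmetries actually arises from an admissible $f$; these checks are routine but essential to conclude.
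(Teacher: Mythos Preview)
Your proof is correct and proceeds by a genuinely different route from the paper's. The paper constructs $f=(1+A\psi)h$ directly, with $\psi$ a $\pi/2$-periodic bump and $h$ a $\pi$-periodic (but not $\pi/2$-periodic) profile, arranged so that the difference $\partial_\theta\log f(\theta+\pi/2)-\partial_\theta\log f(\theta)$ is independent of $A$ while the right-hand side scales linearly in $A$; sending $A\to\infty$ then kills the inequality.

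Your approach is more systematic: the substitution $(f)\leftrightarrow(S,w)$ reduces \eqref{e:counter-example} to the weighted inequality $(\star)$, which is transparently violable by decoupling the weight $w$ from the support of $S'$. In your variables the paper's construction amounts to keeping $S$ fixed and inflating $w$ on a plateau where $S'=0$, whereas you keep $S$ fixed and deflate $w$ on the transition windows where $S'\neq 0$; these are dual manipulations of the same weighted inequality. Your change of variables buys a clearer picture of \emph{why} the inequality fails (it is a weighted Poincar\'e-type estimate with an arbitrary weight), at the cost of verifying the bijection, which you handle correctly. One cosmetic point: as written, ``$w=1$ on $[\delta,\pi/2-\delta]$'' and ``$w=\eta$ on a window covering the transitions of $\phi$'' overlap if the windows are both exactly $[-\delta,\delta]$; you should take the $\phi$-transition strictly inside the $\eta$-window (or shrink the plateau of $w$ slightly), but this does not affect the argument.
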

\begin{proof}
In order to do so, we consider $f = (1 + A \psi) h$ with $\psi,h$ that are smooth, $\pi$-periodic. We also choose $h$ so that,
\[ h(\theta) = \begin{cases} 2 & \text{ if } \theta \in [\pi/4,\pi/2] \\ 1 & \text{ if } \theta \in [3 \pi/4,\pi] \end{cases} \]
and $h$ increasing in $[0,\pi]$ and decreasing in $[\pi,2\pi]$. As far as $\psi$ is concerned, we choose it non-negative, vanishing on $[0,\pi/4]$ and $\pi/2$-periodic.

Since $\psi$ is $\pi/2$-periodic and $h$ has zero derivative on the support of $\psi$, we observe that
\begin{align*}
  \log f(\theta+\pi/2) - \log f(\theta) &= \log h(\theta+\pi/2) - \log h(\theta), \\
 \partial_\theta \log h(\theta+\pi/2) - \partial_\theta \log h(\theta) &= 0 \text{ on the support of } \psi.
\end{align*}
These two facts imply that 
\[ \int_{[0,2\pi]} |\partial_\theta \log f(\theta+\pi/2) - \partial_\theta \log f(\theta)|^2 (f(\theta+\pi/2)+f(\theta)) \dd \theta \text{ does not depend on $A$}.\]

But 
\begin{align*} \int_{[0,2\pi]} \frac{(f(\theta+\pi/2) - f(\theta))^2} {f(\theta+\pi/2)+f(\theta)}  \dd \theta
  & =  \int_{[0,2\pi]} (1+A \psi (\theta)) \frac{(h(\theta+\pi/2) - h(\theta))^2} {h(\theta+\pi/2)+h(\theta)}  \dd \theta \\
  & \ge A \int_{[0,2\pi]}  \psi (\theta) \frac{(h(\theta+\pi/2) - h(\theta))^2} {h(\theta+\pi/2)+h(\theta)}  \dd \theta.
\end{align*}
Taking $A \to \infty$, we conclude that \eqref{e:counter-example} cannot hold true unless $\Lambda=0$. 
\end{proof}

\section{Subordinate diffusions}
\label{s:subordinate}

Our example of a two dimensional kernel for which $\Lambda_b = 0$ suggests that the lower bound $\Lambda \geq d-2$ may not be improvable in full generality. However, this lower bound also appears to be suboptimal in many cases of interest. In this section we study a special kind of kernels of the form \eqref{e:intro-subordinate}. It is a fairly generic class that includes for example the kernels of the fractional Laplacian on the sphere. We obtain explicit lower bounds for $\Lambda_b$ for kernels $b$ in this class that are always larger than the dimension $d$. These improved estimates for kernels of subordinate processes is what eventually allows us to verify the assumptions of Theorem \ref{t:main} for power-law kernels in the very soft potential range.

The objective of this section is to prove \eqref{e:spherical-inequality} for kernels $b$ that are of the form
\begin{equation} \label{e:subordination}
b(c) = \int_0^\infty h_t(c) \omega(t) \dd t.
\end{equation}
Here, $h_t$ is the heat kernel on $S^{d-1}$ after time $t>0$, and $\omega \geq 0$ is an arbitrary weight function. We will obtain that for these kernels, the inequality \eqref{e:spherical-inequality} holds with a constant $\Lambda_b$ that is always larger than the dimension $d$.

In this work, it  is convenient to call  heat kernel the function $h_t$  so that
\[ u(t,\sigma) = \int_{S^{d-1}} f(\sigma') h_t(\sigma\cdot\sigma') \dd \sigma'\]
is the representation formula for the heat equation on the sphere,
\begin{align*}
u(0,\sigma) &= f(\sigma), \\
\partial_t u &= \Delta u.
\end{align*}
By the rotational symmetry of the heat equation, the heat kernel $h_t$ depends only on the angle between $\sigma$ and $\sigma'$. We write it as $h_t(\sigma\cdot\sigma')$ to match the notation for the angular part of the collision kernel $b(\sigma\cdot\sigma')$. 

\subsection{Explicit formulas for constants in key inequalities}

We state first a proposition that improves Corollary~\ref{c:curvature-lower-bound} for kernels $b$ of the form \eqref{e:subordination}. In this case, we can prove that \eqref{e:DtoFisher} holds with a larger constant $\cOne$ and in any dimension.

\begin{prop}[Integrated $\Gamma^2_{\B,\Delta}$ criterion for subordinate diffusions] \label{p:cOne_for_subordinate}
Let $b$ be any kernel of the form \eqref{e:subordination}. For any function $f:S^{d-1} \to (0,\infty)$ such that $f(\sigma) = f(-\sigma)$, the inequality \eqref{e:DtoFisher} holds with
\[ \cOne = \frac 12 \int_0^\infty \omega(t) (1-\exp(-2\Lambda_{local} t)) \dd t.\]
\end{prop}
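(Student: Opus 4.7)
The plan is to use the subordination representation to reduce the nonlocal $\Gamma^2_{\B,\Delta}$ estimate to a one-parameter family of heat-semigroup estimates. Writing $P_t = e^{t\Delta}$ for the spherical heat semigroup and using that $P_t \mathbf{1} = \mathbf{1}$ (so that $\int h_t(\sigma\cdot\sigma')\,\dd\sigma' = 1$), the representation \eqref{e:subordination} becomes
\[ \B f = \int_0^\infty (P_t f - f)\,\omega(t)\,\dd t. \]
By Lemma~\ref{l:fisher-along-B} and the linearity of the tangent functional $\is'(f)$ in its second argument,
\[ \int_{S^{d-1}} f\,\Gamma^2_{\B,\Delta}(\log f, \log f)\,\dd\sigma = -\frac{1}{2}\langle \is'(f), \B f\rangle = -\frac{1}{2}\int_0^\infty \omega(t)\,\langle \is'(f), P_t f - f\rangle\,\dd t. \]
The task therefore reduces to proving, for every $t>0$, the scalar bound
\[ -\langle \is'(f), P_t f - f\rangle \;\geq\; (1 - e^{-2\Lambda_{local} t})\,\is(f). \]

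This bound will be a direct consequence of two classical ingredients. First, the Fisher information is convex in $f$ (because $(x,y)\mapsto |x|^2/y$ is jointly convex on $\R^d\times(0,\infty)$), so the subgradient inequality yields
\[ \langle \is'(f), P_t f - f\rangle \;\leq\; \is(P_t f) - \is(f). \]
Second, the Bakry--\'Emery curvature--dimension criterion on the sphere gives the exponential contraction of Fisher information along the heat flow,
\[ \is(P_t f) \;\leq\; e^{-2\Lambda_{local} t}\,\is(f), \]
where $\Lambda_{local}$ is the improved constant for even functions descending to $\mathbb{R}\mathbb{P}^{d-1}$; here the symmetry hypothesis $f(\sigma) = f(-\sigma)$ is exactly what permits the use of this improved constant rather than the plain Ricci bound $d-2$. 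Subtracting the two inequalities gives precisely the required pointwise-in-$t$ lower bound.

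Inserting this estimate into the identity displayed above and using $\omega\geq 0$ yields
\[ \int_{S^{d-1}} f\,\Gamma^2_{\B,\Delta}(\log f, \log f)\,\dd\sigma \;\geq\; \frac{1}{2}\int_0^\infty \omega(t)(1 - e^{-2\Lambda_{local} t})\,\dd t \cdot \is(f) = \cOne\,\is(f), \]
which is exactly \eqref{e:DtoFisher}. The argument is short and does not involve any direct pointwise computation with the nonlocal carré du champ: the conceptual content is that subordination converts the genuinely nonlocal $\Gamma^2_{\B,\Delta}$ inequality into a heat-semigroup statement, after which convexity of $\is$ together with the standard Bakry--\'Emery contraction do all of the work. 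The only step that demands real care is keeping track of which Bakry--\'Emery constant $\Lambda_{local}$ applies, since the evenness hypothesis is what makes the subsequent combination with the Poincaré-type inequality produce $\Lambda_b > d$ as advertised in the introduction.
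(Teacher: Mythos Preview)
Your proof is correct and follows essentially the same route as the paper: the paper also decomposes $\B$ via subordination into the family $\B_t f = P_t f - f$, then for each $t$ invokes the convexity of the Fisher information (their Lemma~\ref{l:using-convexity}) together with the exponential decay $\is(P_t f)\le e^{-2\Lambda_{local}t}\is(f)$ coming from Theorem~\ref{t:be} via Gr\"onwall (their Lemma~\ref{l:reducingFisher}), and finally integrates against $\omega(t)$. The only cosmetic difference is that the paper writes the decomposition at the level of $\Gamma^2_{\B,\Delta}$ using Lemma~\ref{l:Gamma2-with-T}, whereas you write it directly at the level of $\langle \is'(f),\B f\rangle$; the content is identical.
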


We then give a simpler proof of the Poincaré-type inequality from Proposition~\ref{p:poincare} for collision kernels of the form \eqref{e:subordination}. We also provide the reader with a handy formula for the constant $C_P$ in which the weight $\omega$ appears.
\begin{prop}[Poincaré-type inequality, again] \label{p:poincare-subordinate}
For any function $f : S^{d-1} \to (0,\infty)$ such that $f(\sigma) = f(-\sigma)$, the inequality~\eqref{e:H1toNonlocal} holds with
\[ \cP = \int_0^\infty \omega(t) \frac{1-e^{-2dt}}d \dd t. \]
\end{prop}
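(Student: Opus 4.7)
\textbf{Proof proposal for Proposition~\ref{p:poincare-subordinate}.}

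The plan is to use the subordination structure to reduce everything to a one-parameter family of heat-kernel dissipations, which diagonalize exactly in the spherical harmonic basis. Precisely, since $b$ is of the form \eqref{e:subordination}, Fubini gives
\[
\iint_{S^{d-1}\times S^{d-1}} (f'-f)^2 b(\sigma'\cdot \sigma)\dd\sigma'\dd\sigma
= \int_0^\infty \omega(t)\, D_t(f)\dd t,
\qquad
D_t(f) := \iint_{S^{d-1}\times S^{d-1}} (f'-f)^2 h_t(\sigma'\cdot\sigma)\dd\sigma'\dd\sigma.
\]
So it suffices to prove the pointwise-in-$t$ estimate $D_t(f) \le \frac{1-e^{-2dt}}{d}\int_{S^{d-1}} |\nabla f|^2 \dd\sigma$ for every even $f$, and then integrate against $\omega$.

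Expand $f = \sum_{\ell \geq 0} x_\ell Y_\ell$ in spherical harmonics. Because $f(\sigma) = f(-\sigma)$, only even degrees $\ell$ contribute, and the term $\ell=0$ contributes to neither side. Since $h_t$ is the heat kernel, $e^{t\Delta} Y_\ell = e^{-\lambda_\ell t} Y_\ell$ with $\lambda_\ell = \ell(\ell+d-2)$. A direct expansion of the square gives
\[
D_t(f) = 2\int_{S^{d-1}} f^2\dd\sigma - 2\int_{S^{d-1}} f\,(e^{t\Delta} f)\dd\sigma
= 2 \sum_{\ell\geq 2,\,\ell\text{ even}} (1-e^{-\lambda_\ell t}) x_\ell^2,
\]
while $\int_{S^{d-1}} |\nabla f|^2\dd\sigma = \sum_{\ell} \lambda_\ell x_\ell^2$.

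The key elementary inequality is that the map $x \mapsto (1-e^{-xt})/x$ is nonincreasing on $(0,\infty)$ for every fixed $t>0$; this follows from $(1+y)e^{-y}\le 1$ for $y\geq 0$. Since $\lambda_\ell \geq \lambda_2 = 2d$ for every even $\ell\geq 2$, we deduce
\[
1-e^{-\lambda_\ell t} \leq \frac{\lambda_\ell}{\lambda_2}\bigl(1-e^{-\lambda_2 t}\bigr),
\qquad \ell \geq 2\text{ even},
\]
which yields
\[
D_t(f) \leq \frac{2(1-e^{-2dt})}{2d}\sum_{\ell} \lambda_\ell x_\ell^2 = \frac{1-e^{-2dt}}{d}\int_{S^{d-1}} |\nabla f|^2\dd\sigma.
\]
Multiplying by $\omega(t)$ and integrating in $t$ delivers the claimed constant $\cP = \int_0^\infty \omega(t)\,\frac{1-e^{-2dt}}{d}\dd t$. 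There is no real obstacle here; the only subtle point is the parity restriction to even $\ell$, which is precisely what brings $\lambda_2 = 2d$ (rather than $\lambda_1 = d-1$) into play, and the trivial monotonicity of $(1-e^{-xt})/x$. (Equivalently, one may invoke Corollary~\ref{c:C2} directly: it gives $\cP = 2\tilde\lambda_2/\lambda_2$, and the subordination formula $\tilde\lambda_2 = \int_0^\infty \omega(t)(1-e^{-2dt})\dd t$ follows from the same computation $\int h_t(e\cdot\sigma') P_\ell(e\cdot\sigma')\dd\sigma' = e^{-\lambda_\ell t}$.)
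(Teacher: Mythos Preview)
Your proof is correct and follows essentially the same route as the paper: the paper proves the heat-kernel case (Lemma~\ref{l:poincare-subordinate}) by spectral decomposition and the observation that $\max_{\ell\geq 2}\frac{1-e^{-\lambda_\ell t}}{\lambda_\ell}=\frac{1-e^{-2dt}}{2d}$, then integrates against $\omega$; you do the same, only making the monotonicity of $x\mapsto (1-e^{-xt})/x$ explicit. The alternative you mention via Corollary~\ref{c:C2} is also noted in the paper as a less direct route.
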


Finally, we prove \eqref{e:spherical-inequality} for kernels of the form \eqref{e:subordination}.
\begin{prop}[The key inequality on the sphere for subordinate diffusions] \label{p:main-for-subordinate}
Let $b$ be a kernel of the form \eqref{e:subordination} for some weight $\omega \geq 0$. The inequality \eqref{e:spherical-inequality} holds with 
\[ \Lambda_b \geq d \frac{\int_0^\infty \omega(t) (1-\exp(-2\Lambda_{local} t)) \dd t}{\int_0^\infty \omega(t) (1-e^{-2dt}) \dd t}. \]
In particular, $\Lambda_b > d$ for any kernel $b$ of the form \eqref{e:subordination}.
\end{prop}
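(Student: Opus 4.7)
The plan is to combine the two immediately preceding propositions via the abstract criterion of Lemma~\ref{l:three-inequalities}, which gives the bound $\Lambda_b \geq 2\cOne/\cP$ as soon as one has \eqref{e:DtoFisher} and \eqref{e:H1toNonlocal}. Both ingredients are already available for kernels of the form \eqref{e:subordination}.

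More precisely, I would first invoke Proposition~\ref{p:cOne_for_subordinate} to obtain \eqref{e:DtoFisher} with
\[ \cOne = \frac{1}{2} \int_0^\infty \omega(t)\bigl(1 - e^{-2\Lambda_{local} t}\bigr) \dd t, \]
and then invoke Proposition~\ref{p:poincare-subordinate} to obtain \eqref{e:H1toNonlocal} with
\[ \cP = \int_0^\infty \omega(t)\, \frac{1-e^{-2dt}}{d}\, \dd t. \]
Applying Lemma~\ref{l:three-inequalities} yields \eqref{e:spherical-inequality} with
\[ \Lambda_b \geq \frac{2\cOne}{\cP} = d\,\frac{\int_0^\infty \omega(t)(1-e^{-2\Lambda_{local} t})\, \dd t}{\int_0^\infty \omega(t)(1-e^{-2dt})\, \dd t}, \]
which is the claimed formula. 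This part is a mechanical algebraic simplification once the two propositions are in hand.

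For the strict lower bound $\Lambda_b > d$, the observation to make is that the Bakry--\'Emery type constant $\Lambda_{local}$ governing the local $\Gamma^2$ inequality on the projective sphere (for even functions) satisfies $\Lambda_{local} > d$. Consequently, the pointwise inequality $1 - e^{-2\Lambda_{local} t} > 1 - e^{-2d t}$ holds for every $t>0$, and after integrating against the nonnegative weight $\omega$ the ratio of integrals is strictly greater than $1$, giving $\Lambda_b > d$ for any admissible $\omega$.

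Because all the substantive analytic work is pushed into Propositions~\ref{p:cOne_for_subordinate} and \ref{p:poincare-subordinate}, the current statement itself is essentially a one-line corollary and presents no real obstacle. The genuine difficulty lies upstream, in the estimate of $\cOne$ for subordinate kernels: one has to exploit the fact that the subordination representation \eqref{e:subordination} allows the integro-differential $\Gamma^2_{\B,\Delta}$ criterion to inherit the sharper curvature constant $\Lambda_{local}$ coming from the local heat flow, rather than the generic lower bound $d-2$ used in Section~\ref{s:curvature}. Once this refined value of $\cOne$ is accepted, the present proposition follows without further difficulty.
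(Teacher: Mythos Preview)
Your proof is correct and follows exactly the same route as the paper: invoke Propositions~\ref{p:cOne_for_subordinate} and~\ref{p:poincare-subordinate} to obtain the constants $\cOne$ and $\cP$, plug into Lemma~\ref{l:three-inequalities}, and then use $\Lambda_{local}>d$ for the strict inequality. There is nothing to add.
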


\subsection{Comments on subordinate diffusions}

Recall that the fractional Laplacian, as well as a large family of integral operators, can be written in the form \eqref{e:subordination} for some weight function $\omega \geq 0$. In terms of the corresponding stochastic processes, we are analyzing the integral operators that arise as infinitesimal generators of a process obtained from the Brownian motion by subordination. For the kernel $b$ of the fractional Laplacian $-(-\Delta)^s$, we would have $\omega(\tau) = c_s \tau^{-1-s}$ for some constant $c_s$ that only depends on dimension and $s$.

The purpose of this section is to establish a proof of \eqref{e:spherical-inequality} for collision kernels $b$ of the form \eqref{e:subordination}. We do it by establishing \eqref{e:DtoFisher} and \eqref{e:H1toNonlocal} and applying Lemma \ref{l:three-inequalities}. Because of the example in Section \ref{s:counterexample}, we know that \eqref{e:spherical-inequality} (and in particular \eqref{e:DtoFisher}) cannot hold for just any kernel $b$. The condition \eqref{e:subordination} is the further structural assumption on the kernel that we make to be able to prove \eqref{e:DtoFisher} with a precise constants $\cOne$.

The kernels of the form \eqref{e:subordination} comprise a relatively general family of kernels. In $\R^d$, this type of kernels has been studied extensively. They correspond to the jump kernels of subordinate Brownian motions, and they are identified as kernels $k(x-y)$ that coincide with a completely monotone function applied to $|x-y|^2$. Indeed, the jump kernel is the density of the Lévy measure of the subordinate process. It is given by the following formula,
\[ k (z) = \mathcal{L} (|z|^2) = \int_0^{\infty} (4\pi s)^{-d/2} e^{-\frac{|z|^2}{4s}} \rho (\dd s) \]
(see  \cite[(30.8), p.198]{sato}). Now Bernstein's theorem (see \cite[Theorem~1.4]{MR2598208}) asserts that $\mathcal{L}$ is a completely monotone function.   While we are not able to provide a direct proof that the Boltzmann collision kernels corresponding to power-law potentials can indeed be written in the form \eqref{e:subordination}, numerical computations show that this identification holds at least up to a negligibly small error. Due to the criterion \eqref{e:kernel-comparison}, that is enough for us to obtain a useful estimate for the parameter $\Lambda_b$.

\subsection{Proofs}

The following inequality is proved in \cite{landaufisher2023} with the optimal constant $\Lambda_{local}$ in $2D$, and with $\Lambda_{local} = 4.73$ in 3D. It is the version for the projective space of an inequality that is used in the proof by Bakry and \'Emery of the log-Sobolev inequality (see \cite[Proposition~5.7.3]{bakry-2014}). The values of the parameter $\Lambda_{local}$ are improved and computed for all dimension in \cite{sehyun2024}.

\begin{thm}[The $\Gamma^2$ criterion on the projective space -- \cite{sehyun2024}] \label{t:be}
Let $f : S^{d-1} \to (0,\infty)$ be a smooth function such that $f(\sigma)=f(-\sigma)$ for every $\sigma \in S^{d-1}$. Then, the following inequality holds
\begin{equation} \label{e:be}
\Lambda_{local} \int_{S^{d-1}} \frac{|\nabla f|^2}f \dd \sigma \leq \int_{S^{d-1}} \Gamma^2_{\Delta,\Delta} (\log f, \log f) \, f(\sigma) \dd \sigma.
\end{equation}
for $\Lambda_{local} = d+3 - 1/(d-1)$. In particular $\Lambda_{local} > d$ for all $d \geq 2$.
\end{thm}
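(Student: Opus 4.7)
The plan is to reduce the integrated $\Gamma^2$ criterion to a Bochner computation on $S^{d-1}$ and then exploit the evenness assumption to upgrade the resulting constant. Setting $g = \log f$, the inequality to prove becomes
\[ \int_{S^{d-1}} \Gamma^2_{\Delta,\Delta}(g,g)\, e^g \dd \sigma \geq \Lambda_{local} \int_{S^{d-1}} |\nabla g|^2 e^g \dd \sigma. \]
Bochner's identity on the sphere, with Ricci tensor equal to $(d-2)$ times the metric, yields the pointwise formula $\Gamma^2_{\Delta,\Delta}(g,g) = |\nabla^2 g|^2 + (d-2)|\nabla g|^2$, together with the orthogonal decomposition $|\nabla^2 g|^2 = |\hat\nabla^2 g|^2 + (\Delta g)^2/(d-1)$ of the Hessian into its trace-free and trace parts. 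These ingredients give the standard Bakry--\'Emery $CD(d-2,d-1)$ inequality, which by itself only produces the constant $d-1$ in the integrated form, and $d-1$ is strictly smaller than the target $\Lambda_{local} = d+3-1/(d-1)$.

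The improvement from $d-1$ to $\Lambda_{local}$ must come from the evenness hypothesis, which confines the problem to the projective space $\mathbb R \mathbb P^{d-1}$ whose first non-trivial Laplacian eigenvalue is $2d$ instead of $d-1$. I would exploit this through an integration-by-parts identity applied to $(\Delta g)^2 e^g$: writing $\Delta(e^g) = (\Delta g + |\nabla g|^2)\,e^g$ and integrating by parts in
\[ \int (\Delta g)^2 e^g \dd \sigma = -\int \langle \nabla g, \nabla \Delta g\rangle e^g \dd \sigma - \int |\nabla g|^2 \Delta g\, e^g \dd \sigma, \]
and then invoking Bochner again to convert $\langle \nabla g, \nabla \Delta g\rangle$ into $\frac{1}{2}\Delta |\nabla g|^2 - |\nabla^2 g|^2 - (d-2)|\nabla g|^2$. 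Substituted back into the Bochner bound, this produces an integrated expression involving $\int |\hat\nabla^2 g|^2 e^g$, $\int |\nabla g|^4 e^g$, and $\int |\nabla g|^2 \Delta g\, e^g$, which one then tries to write as a sum of pointwise nonnegative terms plus a controllable multiple of $\int |\nabla g|^2 e^g$.

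The main obstacle is the book-keeping of the cubic and quartic cross terms in $\nabla g$, $\nabla^2 g$, and $\Delta g$ produced by the successive integrations by parts. Following the strategy in \cite{landaufisher2023} for $d=3$ and its refinement in \cite{sehyun2024}, one seeks to complete the square so that
\[ \int_{S^{d-1}} \Bigl( \Gamma^2_{\Delta,\Delta}(g,g) - \Lambda_{local} |\nabla g|^2 \Bigr) e^g \dd \sigma \]
becomes a nonnegative combination of expressions of the form $|\hat \nabla^2 g - A\, \nabla g \otimes \nabla g|^2 e^g$ and $(\Delta g - B\,|\nabla g|^2)^2 e^g$, together with a residual $\int (\Delta g)^2 e^g - (d-1)\int |\nabla g|^2 e^g \ge 0$ controlled by the projective-space spectral gap $\lambda_2=2d$ (which requires evenness, since it rules out the $\ell=1$ harmonics that would otherwise saturate the weaker bound $d-1$). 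The value $\Lambda_{local} = d+3 - 1/(d-1)$ arises as the optimal constant for which this completion of squares closes; identifying the correct coefficients $A,B$ and verifying that the residual matches exactly the improved Poincaré gap on $\mathbb R \mathbb P^{d-1}$ is the delicate step. The counterexample constructed in Section~\ref{s:counterexample} in the $d=2$ purely non-local setting confirms that without evenness no positive constant survives, so pinpointing where evenness enters quantitatively is precisely the crux of the argument.
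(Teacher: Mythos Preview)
The paper does not prove Theorem~\ref{t:be}; it is quoted as an external result, with the case $d=3$ (constant $4.73$) attributed to \cite{landaufisher2023} and the general-dimension constant $\Lambda_{local}=d+3-1/(d-1)$ to \cite{sehyun2024}. So there is no ``paper's own proof'' to compare against, and your proposal should be read as an attempt to reconstruct the argument of those references.

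Your overall architecture --- Bochner identity, trace/trace-free splitting of the Hessian, integration by parts against the weight $e^g$, and completion of squares --- is indeed the right framework and matches what is done in \cite{landaufisher2023,sehyun2024}. But two points in your sketch are off. First, the ``residual'' you propose to control, namely $\int (\Delta g)^2 e^g \dd\sigma - (d-1)\int |\nabla g|^2 e^g \dd\sigma \geq 0$, is a \emph{weighted} inequality and is not a consequence of the spectral gap $\lambda_2=2d$ on $\mathbb R\mathbb P^{d-1}$, which governs only the unweighted quadratic form. The actual proofs do not invoke a weighted Poincar\'e inequality; the evenness hypothesis is used through a purely algebraic completion of squares at the level of the pointwise integrand (the tensors $\hat\nabla^2 g$, $\nabla g\otimes\nabla g$, $\Delta g$, $|\nabla g|^2$), where the absence of odd spherical harmonics enters via the structure of the equality cases one must avoid. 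Identifying the precise coefficients $A,B$ so that the remainder is a sum of manifestly nonnegative pointwise quantities is the entire content of \cite{sehyun2024}, and your sketch stops exactly before this computation.

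Second, your invocation of Section~\ref{s:counterexample} is misplaced: that example concerns a singular \emph{non-local} kernel (a Dirac mass at angle $\pi/2$) and the functions used there \emph{are} even ($\pi$-periodic). It shows that $\Lambda_b$ can vanish for pathological integral kernels in $d=2$, not that the local $\Gamma^2_{\Delta,\Delta}$ criterion fails without evenness. Without evenness the local criterion still holds with the Ricci constant $d-2$; evenness is what upgrades it to $d+3-1/(d-1)$.
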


We recall the values of the entropy $\hs(f)$ and the Fisher information $\is(f)$. We also write $\js(f)$ to denote the right-hand side of the inequality in \eqref{e:be}.
\begin{align*}
\hs(f) &:= -\int_{S^{d-1}} f \log f \dd \sigma, \\
\is(f) &:= \int_{S^{d-1}} \frac{|\nabla f|^2 } f \dd \sigma, \\
\js(f) &:= \int_{S^{d-1}} \Gamma^2_{\Delta,\Delta} (\log f,\log f) \,  f(\sigma) \dd \sigma.
\end{align*}

These three quantities are related through the heat flow. Let us consider $u: [0,\infty) \times S^{d-1} \to (0,\infty)$ to be the solution to the heat equation on the sphere.
\begin{align*}
u(0,\sigma) &= f(\sigma) \\
\partial_t u &= \Delta_\sigma u.
\end{align*}
For convenience, let us use the sub-index $t$ to denote $f_t(\sigma) := u(t,\sigma)$.

\begin{lemma} \label{l:derivativesintime}
We have the following identities
\[ \partial_t \hs(f_t) = \is(f_t) \qquad \text{ and } \qquad \partial_t \is(f_t) = -2\js(f_t).\]
\end{lemma}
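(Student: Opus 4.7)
Both identities are direct consequences of the definitions together with self-adjointness of $\Delta$ on $S^{d-1}$ and the commutation of $\Delta$ with itself. The plan is to differentiate under the integral sign, use $\partial_t f_t = \Delta f_t$, and then recognize the resulting expressions as, respectively, $\is(f_t)$ and $-2\js(f_t)$.

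For the first identity, I compute
\[
\partial_t \hs(f_t) = -\int_{S^{d-1}} (\partial_t f_t)\log f_t \dd \sigma - \int_{S^{d-1}} \partial_t f_t \dd \sigma = -\int_{S^{d-1}} (\Delta f_t)\log f_t \dd \sigma,
\]
using that $\int \Delta f_t \dd \sigma = 0$. An integration by parts then yields $\int_{S^{d-1}} \nabla f_t \cdot \nabla \log f_t \dd \sigma = \int_{S^{d-1}} |\nabla f_t|^2/f_t \dd \sigma = \is(f_t)$. This is the exact analogue, for the heat flow, of the computation giving $\langle \hs'(f), \B f\rangle$ in \eqref{e:dHdB}.

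For the second identity, the plan is to repeat essentially the argument of Lemma~\ref{l:fisher-along-B}, but with $\B$ replaced by $\Delta$ everywhere (which is legitimate because $\Delta$ commutes with itself and is self-adjoint, the only two properties of $\B$ that were used in that proof). Starting from $|\nabla f_t|^2/f_t = f_t\, \Gamma_\Delta(\log f_t, \log f_t)$ and differentiating in $t$ along $\partial_t f_t = \Delta f_t$, I would obtain
\[
\partial_t \is(f_t) = \int_{S^{d-1}} \bigl[ \Delta f_t\, \Gamma_\Delta(\log f_t,\log f_t) + 2 f_t \,\Gamma_\Delta(\log f_t,\, \Delta f_t/f_t)\bigr] \dd \sigma,
\]
then expand $\Gamma_\Delta(\log f_t, \Delta f_t/f_t) = \nabla \log f_t \cdot \nabla(\Delta f_t/f_t)$, distribute the quotient, and integrate by parts using self-adjointness of $\Delta$. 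The definition of $\Gamma^2_{\Delta,\Delta}$ in \eqref{e:2nd-order-carré-du-champ} then produces exactly $-2\int f_t\, \Gamma^2_{\Delta,\Delta}(\log f_t,\log f_t)\dd \sigma = -2\js(f_t)$.

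No step here looks genuinely difficult: the only care needed is to keep track of the factor $2$ coming from symmetry in $\Gamma_\Delta$ and to justify the interchange of $\partial_t$ with the spatial integrals, which is immediate since $f_t$ is smooth and strictly positive on $S^{d-1}$ (the heat semigroup on a compact manifold instantly produces smooth, strictly positive solutions from any nonnegative initial datum with positive mass, so all quantities under the integral sign are well-behaved).
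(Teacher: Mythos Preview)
Your proposal is correct and is exactly the approach the paper indicates: the paper's proof simply states that both identities are classical and can be verified by adapting the computations from Section~\ref{s:carré-du-champ}, which is precisely what you do (differentiate under the integral along $\partial_t f_t=\Delta f_t$, integrate by parts, and for the second identity mimic the proof of Lemma~\ref{l:fisher-along-B} with $\B$ replaced by $\Delta$). You have supplied the details that the paper omits, and nothing is missing.
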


\begin{proof}
They are both classical. They can be verified by adapting  computations from Section \ref{s:carré-du-champ}.
\end{proof}

Our plan is to use Theorem \ref{t:be} to deduce the inequalities \eqref{e:DtoFisher} and \eqref{e:H1toNonlocal} with precise constants when $b = b_t$ is the heat kernel. Then, we integrate in time to deduce the corresponding inequalities for a generic kernel $b$ of the form \eqref{e:subordination}.

In the case that $b = h_t$, the operator $\B=\B_t$ takes the simple form
\[ \B_t f = f_t - f.\]

\begin{lemma} \label{l:using-convexity}
For any function $f:S^{d-1} \to (0,\infty)$, we have
\[ -\langle \is'(f), \B_t f \rangle \geq \is(f) - \is(f_t).\]
\end{lemma}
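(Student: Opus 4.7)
The plan is to exploit the convexity of the Fisher information in its argument together with the simple form $\B_t f = f_t - f$ of the operator associated with the heat kernel.

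First, I would recall that the Fisher information on the sphere
\[ \is(g) = \int_{S^{d-1}} \frac{|\nabla g|^2}{g} \dd \sigma \]
is a convex functional of $g \colon S^{d-1} \to (0,\infty)$. This is because the integrand arises as the composition of the linear map $g \mapsto (g, \nabla g)$ with the jointly convex perspective function $(a,p) \mapsto |p|^2/a$ on $(0,\infty) \times \R^{d-1}$, and integration preserves convexity. Consequently, $\is$ lies above any of its tangent hyperplanes: for every admissible $g$,
\[ \is(g) \geq \is(f) + \langle \is'(f), g - f \rangle. \]

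Second, I would simply specialize this subgradient inequality to $g = f_t$. Since $\B_t f = f_t - f$ by definition of the operator associated with the heat kernel at time $t$, this gives
\[ \is(f_t) \geq \is(f) + \langle \is'(f), f_t - f \rangle = \is(f) + \langle \is'(f), \B_t f \rangle. \]
Rearranging yields exactly
\[ -\langle \is'(f), \B_t f \rangle \geq \is(f) - \is(f_t), \]
which is the claimed inequality.

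There is no real obstacle here; the only point that deserves a line of justification (and which I would make explicit in the proof) is the convexity of $\is$. Everything else is algebraic, and the argument does not require any spherical harmonic or heat-semigroup computation beyond the defining identity $\B_t f = f_t - f$. A remark worth adding is that this lemma is the analogue, at the integro-differential level, of the standard fact that $t \mapsto \is(f_t)$ is nonincreasing along the heat flow (from Lemma~\ref{l:derivativesintime}, $\partial_t \is(f_t) = -2\js(f_t) \leq 0$): the inequality above compares the value at time $0$ with the value at time $t$ via a one-step linearization rather than a full time integration.
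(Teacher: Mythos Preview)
Your proof is correct and follows exactly the same approach as the paper, which simply states that the inequality is ``merely a consequence of the fact that the Fisher information $I$ is a convex functional.'' You have just spelled out in detail the subgradient inequality that the paper leaves implicit.
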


\begin{proof}
This is merely a consequence of the fact that the Fisher information $I$ is a convex functional.
\end{proof}

\begin{lemma} \label{l:reducingFisher}
For any function $f:S^{d-1} \to (0,\infty)$ such that $f(\sigma) = f(-\sigma)$, the following inequality holds
\[ \is(f_t) \leq \exp(-2\Lambda_{local} t) \is(f).\]
\end{lemma}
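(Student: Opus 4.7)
The plan is to combine the two identities collected in Lemma \ref{l:derivativesintime} with the $\Gamma^2$ criterion on the projective space (Theorem \ref{t:be}), and then integrate a differential inequality via Grönwall.

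First I would verify that the antipodal symmetry $f(\sigma)=f(-\sigma)$ is preserved by the heat flow on the sphere. This is immediate because the Laplace-Beltrami operator commutes with the antipodal map $\sigma \mapsto -\sigma$: if $f$ is even, then $\tilde f_t(\sigma) := f_t(-\sigma)$ solves the same heat equation with the same initial data, so by uniqueness $f_t$ is even for every $t \geq 0$. This means that Theorem \ref{t:be} is applicable to $f_t$ at every time.

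Next, I would apply Theorem \ref{t:be} to $f_t$ to obtain
\[ \js(f_t) \geq \Lambda_{local}\, \is(f_t) \quad \text{for every } t \geq 0. \]
Combining this with the identity $\partial_t \is(f_t) = -2 \js(f_t)$ from Lemma \ref{l:derivativesintime} yields the differential inequality
\[ \partial_t \is(f_t) \leq -2 \Lambda_{local}\, \is(f_t). \]

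Finally, Grönwall's lemma (applied to the function $t \mapsto \is(f_t)$, which is nonnegative and smooth in $t>0$ since $f_t$ becomes smooth and strictly positive instantaneously under the heat flow) gives
\[ \is(f_t) \leq \exp(-2 \Lambda_{local} t)\, \is(f), \]
which is the claimed estimate. There is no serious obstacle here: the result is essentially a packaging of Theorem \ref{t:be} and Lemma \ref{l:derivativesintime}, and the only point requiring a brief justification is the preservation of the evenness hypothesis under the heat flow so that Theorem \ref{t:be} remains available at every time.
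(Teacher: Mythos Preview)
Your proposal is correct and follows exactly the same route as the paper: combine Theorem~\ref{t:be} with Lemma~\ref{l:derivativesintime} to get $\partial_t \is(f_t) \leq -2\Lambda_{local}\,\is(f_t)$, then apply Gr\"onwall. Your additional remark that evenness is preserved by the heat flow is a welcome clarification that the paper leaves implicit.
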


\begin{proof}
Combining Theorem \ref{t:be} with Lemma \ref{l:derivativesintime} we obtain
\[ \partial_t \is(f_t) \leq -2\Lambda_{local} \is(f_t). \]
The result follows by Gr\"onwall's inequality.
\end{proof}

In the next lemma, we justify the inequality \eqref{e:DtoFisher} with $\cOne = (1-\exp(-2\Lambda_{local} t))/2$.

\begin{lemma} \label{l:integratingBEintime}
For any function $f:S^{d-1} \to (0,\infty)$ such that $f(\sigma) = f(-\sigma)$, the following inequality holds
\[ -\langle \is'(f), \B_t f \rangle \geq (1-\exp(-2\Lambda_{local} t)) \is(f).\]
In particular, \eqref{e:DtoFisher} holds with constant $\cOne = (1-\exp(-2\Lambda_{local} t))/2$ for $\B = \B_t$.
\end{lemma}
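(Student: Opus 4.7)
The plan is to simply chain together Lemmas \ref{l:using-convexity} and \ref{l:reducingFisher}, and then translate the resulting inequality into the form \eqref{e:DtoFisher} using the identity from Lemma \ref{l:fisher-along-B}.

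First, I would apply Lemma \ref{l:using-convexity} to get
\[ -\langle \is'(f), \B_t f \rangle \geq \is(f) - \is(f_t). \]
Then Lemma \ref{l:reducingFisher} (obtained from Theorem \ref{t:be} combined with Lemma \ref{l:derivativesintime} via a Gr\"onwall argument) controls the Fisher information along the heat flow:
\[ \is(f_t) \leq \exp(-2\Lambda_{local} t) \is(f). \]
Substituting the second bound into the first immediately yields
\[ -\langle \is'(f), \B_t f \rangle \geq \bigl(1 - \exp(-2\Lambda_{local} t)\bigr) \is(f), \]
which is the first claimed inequality. Note that both prior lemmas legitimately apply here: the evenness $f(\sigma)=f(-\sigma)$ is preserved along the heat flow on the sphere (by rotational symmetry of $h_t$), so Theorem \ref{t:be} may be invoked at each time $s \in [0,t]$.

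For the ``in particular'' statement, I would invoke Lemma \ref{l:fisher-along-B}, which gives
\[ \langle \is'(f), \B_t f \rangle = -2 \int_{S^{d-1}} f\, \Gamma^2_{\B_t,\Delta}(\log f,\log f) \dd \sigma, \]
so that
\[ 2 \int_{S^{d-1}} f\, \Gamma^2_{\B_t,\Delta}(\log f,\log f) \dd \sigma \geq \bigl(1 - \exp(-2\Lambda_{local} t)\bigr) \is(f). \]
Dividing by $2$ exhibits \eqref{e:DtoFisher} with the announced constant $\cOne = (1-\exp(-2\Lambda_{local} t))/2$.

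There is no real obstacle; the whole content of the lemma is a clean packaging of Bakry--\'Emery's decay of the Fisher information on the projective sphere (Theorem \ref{t:be}) together with the convexity of $\is$. The only point that deserves verification is the preservation of the symmetry $f(\sigma)=f(-\sigma)$ under the heat flow, which is immediate since the heat kernel $h_t(\sigma'\cdot\sigma)$ is invariant under $\sigma' \mapsto -\sigma'$.
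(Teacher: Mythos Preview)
Your proof is correct and follows essentially the same approach as the paper: combine Lemma~\ref{l:using-convexity} with Lemma~\ref{l:reducingFisher}, then invoke Lemma~\ref{l:fisher-along-B} for the ``in particular'' statement. Your added remark on the preservation of the symmetry $f(\sigma)=f(-\sigma)$ under the heat flow is a helpful clarification that the paper leaves implicit.
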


\begin{proof}
We combine Lemmas \ref{l:using-convexity} and \ref{l:reducingFisher}. For the last observation, we are using Lemma \ref{l:fisher-along-B}.
\end{proof}

\begin{proof}[Proof of Proposition~\ref{p:cOne_for_subordinate}]
Taking into account the representation of $\Gamma^2_{\B,\Delta}$ from Lemma \ref{l:Gamma2-with-T}, we observe that for any function $g : S^{d-1} \to \R$,
\begin{align*}
\Gamma^2_{\B,\Delta} (g,g) &= \int_0^\infty \omega(t) \Gamma^2_{\B_t,\Delta} (g,g) \dd t.
\end{align*}

Applying the equality above to $g= \log f$ and recalling Lemma \ref{l:fisher-along-B}, we get
\begin{align*}
 -\langle \is'(f), \B f \rangle &= 2\int_{S^{d-1}} \Gamma^2_{\B,\Delta} (\log f,\log f) f \dd \sigma \\
 &= \int_0^\infty 2\omega(t) \int_{S^{d-1}} \Gamma^2_{\B_t,\Delta} (\log f,\log f) f \dd \sigma \dd t, \\
 \intertext{we apply Lemma \ref{l:integratingBEintime} to each $\B_t$ and get,}
 &\geq \left( \int_0^\infty \omega(t) (1-\exp(-2\Lambda_{local} t)) \dd t \right) \is(f).
 \end{align*}
 We conclude by recalling the formula in Lemma \ref{l:fisher-along-B}.
\end{proof}

The inequality \eqref{e:H1toNonlocal} is already justified for any applicable kernel in Proposition \ref{p:poincare}. In the case of kernels of the form \eqref{e:subordination}, it can be justified more easily and with an explicit formula for the constant $\cP$ in terms of $\omega$. We work it out in the next lemma.

\begin{lemma} \label{l:poincare-subordinate}
For any function $f : S^{d-1} \to (0,\infty)$ such that $f(\sigma) = f(-\sigma)$, the following inequality holds for any $t > 0$,
\begin{equation}
  \label{e:poincare-subordinate}
  \cP \int_{S^{d-1}} |\nabla f|^2 \dd \sigma \geq \iint_{S^{d-1} \times S^{d-1}} (f'-f)^2 b_t(\sigma'\cdot\sigma) \dd \sigma' \dd \sigma
\end{equation}
with \( \cP = \frac{1-e^{-2dt}}d. \)
\end{lemma}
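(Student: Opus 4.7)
The plan is to diagonalize both sides of \eqref{e:poincare-subordinate} in the basis of spherical harmonics and reduce the inequality to a one-parameter elementary estimate.

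First I would use that $h_t$ is the kernel of the heat semigroup, so for any $f\colon S^{d-1}\to\R$ we have
\[
\B_t f(\sigma) \;=\; \int_{S^{d-1}} \bigl(f(\sigma')-f(\sigma)\bigr)\, h_t(\sigma\cdot\sigma')\dd \sigma' \;=\; (e^{t\Delta}f)(\sigma) - f(\sigma),
\]
since $\int h_t(\sigma\cdot\sigma')\dd \sigma'=1$. Hence, on the eigenspace of $-\Delta$ with eigenvalue $\lambda_\ell = \ell(\ell+d-2)$, the operator $-\B_t$ acts as multiplication by $\tilde\lambda_\ell^{(t)} := 1-e^{-\lambda_\ell t}$ (this is the subordinate relation already recorded in Lemma~\ref{l:eigenspaces}). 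Expanding $f=\sum_{\ell\geq 0} x_\ell Y_\ell$ in an $L^2$-orthonormal basis of spherical harmonics and using the identities
\[
\int_{S^{d-1}}|\nabla f|^2\dd \sigma \;=\;\sum_{\ell} \lambda_\ell x_\ell^2,\qquad
\iint (f'-f)^2 b_t(\sigma\cdot\sigma')\dd \sigma'\dd \sigma \;=\; 2\sum_{\ell}\tilde\lambda_\ell^{(t)} x_\ell^2,
\]
together with the assumption $f(-\sigma)=f(\sigma)$, which forces $x_\ell=0$ for all odd $\ell$, reduces \eqref{e:poincare-subordinate} to the scalar inequalities
\[
\frac{1-e^{-2dt}}{d}\,\lambda_\ell \;\geq\; 2\bigl(1-e^{-\lambda_\ell t}\bigr),\qquad \ell = 2,4,6,\dots
\]

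Second I would rewrite this as $\dfrac{1-e^{-\lambda_\ell t}}{\lambda_\ell}\leq \dfrac{1-e^{-2dt}}{2d}$. The map
\[
g(\lambda) \;:=\; \frac{1-e^{-\lambda t}}{\lambda}
\]
is strictly decreasing on $(0,\infty)$ for each fixed $t>0$ (one-variable calculus: $g'(\lambda)\leq 0$ is equivalent to $e^{-\lambda t}(1+\lambda t)\leq 1$, which follows from $e^{u}\geq 1+u$ applied to $u=\lambda t$). Since $\lambda_2 = 2d$ is the smallest even eigenvalue and $\lambda_\ell\geq \lambda_2$ for all even $\ell\geq 2$, we obtain $g(\lambda_\ell)\leq g(2d)$, which is exactly the required inequality. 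Note also that equality holds at $\ell=2$, so the constant $\cP=(1-e^{-2dt})/d$ is sharp already on the $\ell=2$ spherical harmonics.

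The argument has no genuine obstacle: the only mild point is to verify monotonicity of $g$, which is elementary. I would therefore structure the proof as (i) record the heat-semigroup identification $\B_t = e^{t\Delta}-\mathrm{Id}$ and the resulting eigenvalues, (ii) pass to the spherical harmonic expansion and eliminate odd modes using the even symmetry of $f$, (iii) reduce to the pointwise inequality $g(\lambda_\ell)\leq g(\lambda_2)$ and conclude by monotonicity of $g$.
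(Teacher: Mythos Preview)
Your proposal is correct and follows essentially the same approach as the paper: both proofs identify $\B_t = e^{t\Delta}-\mathrm{Id}$, expand in spherical harmonics, use the even symmetry to restrict to even $\ell$, and reduce to the scalar inequality $\frac{1-e^{-\lambda_\ell t}}{\lambda_\ell}\leq \frac{1-e^{-2dt}}{2d}$ with the smallest nonzero even eigenvalue $\lambda_2=2d$. The only difference is cosmetic: you spell out the monotonicity of $g(\lambda)=(1-e^{-\lambda t})/\lambda$ explicitly, whereas the paper simply asserts the maximum is attained at $\ell=2$.
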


\begin{proof}
We start by expanding the right hand side.
\begin{align*}
\iint_{S^{d-1} \times S^{d-1}} (f'-f)^2 b_t(\sigma'\cdot\sigma) \dd \sigma' \dd \sigma &= -2 \int f \B f \dd \sigma \\
&= 2 \int f (f-f_t) \dd \sigma. 
\end{align*}
We write $f$ in terms of the spectral decomposition of the Laplace-Beltrami operator $-\Delta$ on the sphere. Let $\psi_\ell$ be the normalized even eigenfunctions with eigenvalues $\lambda_1 \leq \lambda_2 \leq \lambda_3 \leq \dots$. We write
\[ f = \sum_{\ell=1}^\infty a_\ell \psi_\ell \quad \text{ and } \quad f_t = \sum_{\ell=1}^\infty a_\ell e^{-\lambda_\ell t} \psi_\ell.\]
We have
\begin{align*}
\int_{S^{d-1}} |\nabla f|^2 \dd \sigma &= \sum_{\ell=1}^\infty \lambda_\ell a_\ell^2 \\
\iint_{S^{d-1} \times S^{d-1}} (f'-f)^2 b_t(\sigma'\cdot\sigma) \dd \sigma' &= 2 \sum_{\ell=1}^\infty (1-e^{\lambda_\ell t}) a_\ell^2.
\end{align*}

The first eigenvalue of the Laplacian on the sphere $\lambda_1$ equals zero, with $\psi_1$ being a constant function. Consequently, the first term in both sums above vanishes and does not count. The second eigenvalue of the Laplacian restricted to even functions $f(\sigma) = f(-\sigma)$ is $\lambda_2 = 2d$. Then,
\[ \max_{\ell = 2,3,4,\dots} \frac{1-\exp(-\lambda_\ell t)}{\lambda_\ell} = \frac{1-\exp(-2d t)}{2d}, \]
and the result follows.
\end{proof}

Alternatively, integrating the estimate from Lemma \ref{l:poincare-subordinate} against the weight $\omega$, it is not difficult to derive Proposition~\ref{p:poincare-subordinate} from Proposition~\ref{p:poincare}. However, the proof presented above is much simpler than the proof of Proposition~\ref{p:poincare}, and thus we opted for this presentation.

\begin{proof}[Proof of Proposition~\ref{p:main-for-subordinate}]
The inequalities \eqref{e:DtoFisher} and \eqref{e:H1toNonlocal} hold due to Proposition~\ref{p:cOne_for_subordinate} and Proposition \ref{p:poincare-subordinate} with constants
\[ \cOne = \int_0^\infty \omega(t) (1-\exp(-2\Lambda_{local} t)) \dd t \qquad \text{and} \qquad \cP = \int_0^\infty \omega(t) \frac{1-e^{-2dt}}d \dd t.\]

Thus, applying Lemma \ref{l:three-inequalities}, we conclude that \eqref{e:spherical-inequality} holds with 
\[ \Lambda_b \geq 2 \cOne/\cP = d \frac{\int_0^\infty \omega(t) (1-\exp(-2\Lambda_{local} t)) \dd t}{\int_0^\infty \omega(t) (1-e^{-2dt}) \dd t}. \qedhere \]

The fact that $\Lambda_b > d$ follows from the inequality
\[ (1-\exp(-2\Lambda_{local} t)) > (1-e^{-2dt}),\]
which is a simple consequence of $\Lambda_{local} > d$.
\end{proof}

Interestingly, with the computations above we can accurately estimate the values of $\cOne$, $\cP$ and $\Lambda_b$ in the case of the fractional Laplacian.

\begin{cor}[Key constants for the fractional Laplacian] \label{c:Lambda_fractional_laplacian}
If $b_s$ is the kernel of the fractional Laplacian $(-\Delta)^s$ on the sphere, we get
\[ \cOne = \frac{(\Lambda_{local})^s}{2^{1-s}}, \qquad \cP = \frac{2^s}{d^{1-s}}, \qquad \Lambda_{b_s} \geq (\Lambda_{local})^{s} d^{1-s}.\]
\end{cor}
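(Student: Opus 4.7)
The plan is a direct computation that plugs the fractional-Laplacian weight into the general formulas for $\cOne$, $\cP$ and $\Lambda_b$ derived in Propositions \ref{p:cOne_for_subordinate}, \ref{p:poincare-subordinate} and \ref{p:main-for-subordinate}. There is no new inequality to establish; the corollary is simply the evaluation of those formulas on a concrete weight.

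First, I would fix the convention for $b_s$. The fractional Laplacian on the sphere admits the subordination representation \eqref{e:subordination} with the explicit weight
\[ \omega(t) = \frac{s}{\Gamma(1-s)}\, t^{-1-s}, \]
which follows from the classical identity
\[ \lambda^s = \frac{s}{\Gamma(1-s)}\int_0^\infty (1-e^{-\lambda t})\, t^{-1-s}\, \dd t, \qquad \lambda>0,\ s\in(0,1). \]
This identity is, in turn, nothing but an integration by parts applied to $\int_0^\infty t^{-1-s}(1-e^{-\lambda t})\dd t$, together with the definition of the Gamma function; the boundary terms vanish because $0<s<1$. From it one immediately reads off the master formula
\[ \int_0^\infty \omega(t) (1-e^{-at})\, \dd t = a^s,\qquad a>0. \]

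Second, I would apply this master formula twice: once with $a=2\Lambda_{\mathrm{local}}$ (in Proposition \ref{p:cOne_for_subordinate}, keeping the factor $\tfrac12$) to obtain
\[ \cOne = \tfrac12 (2\Lambda_{\mathrm{local}})^s = \frac{\Lambda_{\mathrm{local}}^{\,s}}{2^{1-s}}, \]
and once with $a=2d$ (in Proposition \ref{p:poincare-subordinate}, dividing by $d$) to obtain
\[ \cP = \frac{(2d)^s}{d} = \frac{2^s}{d^{1-s}}. \]
Then Lemma \ref{l:three-inequalities} yields the last assertion, since
\[ \Lambda_{b_s} \geq \frac{2\cOne}{\cP} = \frac{2\cdot 2^{s-1}\Lambda_{\mathrm{local}}^{\,s}}{2^s d^{s-1}} = \Lambda_{\mathrm{local}}^{\,s}\, d^{1-s}. \]

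There is essentially no obstacle in the proof: everything reduces to the one Gamma-function computation above. The only point that deserves care is the normalization of $\omega$. One could equivalently bypass the subordination normalization and argue spectrally: on spherical harmonics of order $\ell$ the operator $-\B$ acts as $\lambda_\ell^{\,s}$, and the exact same master formula recovers both the eigenvalue identity and the values of $\cOne$ and $\cP$ consistent with those obtained from Propositions \ref{p:cOne_for_subordinate} and \ref{p:poincare-subordinate}. Either way the conclusion is the same.
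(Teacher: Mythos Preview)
Your proof is correct and follows essentially the same approach as the paper: apply Propositions \ref{p:cOne_for_subordinate}, \ref{p:poincare-subordinate} and \ref{p:main-for-subordinate} with the weight $\omega(t)=c_s t^{-1-s}$ normalized so that $\int_0^\infty \omega(t)(1-e^{-t})\,\dd t=1$, and evaluate the resulting Gamma-type integrals. Your write-up is in fact more explicit than the paper's, which simply states that ``the corollary follows after an elementary computation.''
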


\begin{proof}
We apply Propositions \ref{p:cOne_for_subordinate}, \ref{p:poincare-subordinate} and \ref{p:main-for-subordinate} with the weight function $\omega(t) = c_s t^{-1-s}$, where
\[ c_s \int_0^\infty t^{-1-s} (1-e^{-t}) \dd t = 1.\]
The corollary follows after an elementay computation.
\end{proof}

\subsection{The hard-spheres case}

In the hard-spheres case $b \equiv 1$, the inequality \eqref{e:spherical-inequality} can be proved in a number of ways. Here, we obtain it by passing to the limit as $t \to \infty$ the computations for $b_t$.

\begin{prop}[Fisher is decreasing for hard spheres] \label{p:hard-spheres}
When $b \equiv 1$, the inequality \eqref{e:spherical-inequality} holds with $\Lambda_b = d$. In particular, Theorem \ref{t:main-simpler} holds in the case of 3D hard-spheres.
\end{prop}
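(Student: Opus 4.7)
The plan is to recover the constant kernel case as a $t\to\infty$ limit of the single heat kernel case. First observe that both sides of \eqref{e:spherical-inequality} are linear in $b$ (the left-hand side through the integral formula of Lemma~\ref{l:Gamma2-with-T}, the right-hand side manifestly), so $\Lambda_b$ is invariant under multiplication of $b$ by a positive constant. It thus suffices to prove \eqref{e:spherical-inequality} with $\Lambda_b = d$ for the normalised constant kernel $b\equiv c_d:=1/|S^{d-1}|$, which is precisely the uniform distribution on the sphere and the ergodic limit of $h_t$ as $t\to\infty$.

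For each fixed $t_0>0$, combining Lemma~\ref{l:integratingBEintime} and Lemma~\ref{l:poincare-subordinate} through Lemma~\ref{l:three-inequalities} yields \eqref{e:spherical-inequality} for the heat kernel $b = h_{t_0}$ with constant
\[ \Lambda_{h_{t_0}} \;\geq\; d\cdot\frac{1-e^{-2\Lambda_{\mathrm{local}}\,t_0}}{1-e^{-2dt_0}}. \]
Since $\Lambda_{\mathrm{local}}>d$ (Theorem~\ref{t:be}), this prefactor is strictly greater than $1$ at every finite $t_0>0$ and decreases to $1$ as $t_0\to\infty$. Simultaneously $h_{t_0}(c)$ converges uniformly in $c\in[-1,1]$ to $c_d$. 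For any fixed smooth positive even $f$, both sides of \eqref{e:spherical-inequality} are linear in $b$ with integrands that are bounded in $(\sigma,\sigma')$ (using Lemma~\ref{l:Gamma2-with-T} on the left), so dominated convergence transfers the estimate to the limit kernel $b\equiv c_d$ with $\Lambda_b = d$. The scaling observation of the first paragraph then gives the desired conclusion for $b\equiv 1$.

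The only delicate point in the argument is verifying, for a generic smooth positive $f$ bounded below, that the left-hand side integrand $f\,\Gamma^2_{\B_{h_{t_0}},\Delta}(\log f,\log f)$ admits an integrable majorant uniform in $t_0$. This is immediate from Lemma~\ref{l:Gamma2-with-T}: the integrand is a spherical convolution of a bounded expression involving $\nabla\log f$ and $f$ against the probability density $h_{t_0}$, and the same structure governs the Dirichlet-type right-hand side. Accordingly, no new ingredients beyond those collected in Section~\ref{s:subordinate} are required, and the passage to the limit is purely technical.
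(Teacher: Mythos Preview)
Your proposal is correct and follows essentially the same approach as the paper: pass to the $t\to\infty$ limit of the single heat kernel $h_t$, where the constants $\cOne\to 1/2$ and $\cP\to 1/d$ from Lemmas~\ref{l:integratingBEintime} and \ref{l:poincare-subordinate}, giving $\Lambda_b\geq 2\cOne/\cP = d$ via Lemma~\ref{l:three-inequalities}. You have simply made explicit the scaling observation and the dominated-convergence justification that the paper leaves implicit; the only inessential wrinkle is the claim that the prefactor \emph{decreases} to $1$, which is not needed since it suffices that it is $\geq 1$ for every $t_0$.
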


\begin{proof}
We observe that as $t \to \infty$, the heat kernel $b_t$ converges to the constant kernel. Moreover, the parameters $\cOne$ and $\cP$ from Lemmas \ref{l:integratingBEintime} and \ref{l:poincare-subordinate} satisfy
\begin{align*}
\cOne &\to 1/2 \text{ as } t \to \infty,  \\
\cP &\to 1/d \text{ as } t \to \infty.
\end{align*}
Therefore, from Lemma \ref{l:three-inequalities}, we get $\Lambda_{b_t} \to d$ and we conclude the proof.
\end{proof}

\section{Existence of global smooth solutions}
\label{s:global_existence}

In this section, we prove Theorem \ref{t:global_existence} as a consequence of Theorem \ref{t:main}.
\bigskip

The first ingredient to establish the global existence of smooth solutions is a local-in-time existence result. Then, we combine it with a-priori estimates and a suitable conditional regularity result.

In view of the assumptions for short-time existence / blow-up (Theorem \ref{t:hst}), in order to prove that the non-cutoff Boltzmann equation with soft potentials does not blow up, we only need to establish an a-priori estimate in $\|f(t)\|_{L^{p_1}_{q_1}}$.
\bigskip

Theorem \ref{t:hst} provides a strong solution in some interval of time $[0,T]$. Provided that the initial data is in $L^\infty_q$ for sufficiently large $q$, this solution will be $C^2$ and $D^2f \in L^\infty_p$ for some arbitrary exponent $p$. Because of Lemma \ref{l:finite-fisher}, for any small time $\tau>0$, the Fisher information of $f(\tau,\cdot)$ is finite.

For suitable collision kernels $B$, Theorem \ref{t:main} tells us that the Fisher information remains bounded for all later time. In particular, since $I(f) = 4 \|\sqrt f\|_{\dot H^2}^2 \gtrsim \|f\|_{L^3}$, we know that $f \in L^\infty([\tau,T],L^3(\R^3))$ for any given $\tau>0$.

From the standard conservation of energy, we know that 
\[ \int_{\R^3} \langle v \rangle^2 f(t,v) \dd v \leq M_0 + E_0,\]
where $M_0$ and $E_0$ are the mass and energy of the initial data $f_0$.

Interpolating between these two bounds, we get that for all $t \in [\tau,T]$,
\begin{equation} \label{e:Lp-moment}
\int_{\R^3} \langle v \rangle^k f(t,v)^p \dd v \leq C_{k,p},
\end{equation}
for $p \in [1,3]$ and $k{} = 3-p$. This is enough to apply Theorem \ref{t:hst} for a range of parameters $\gamma$ and $2s$ but not for all them. Indeed, for power-law collision kernels approaching Coulomb, we would have $\gamma \to -3$ and $2s \to 2$, in which case we would need $k_1 = 5/2$ and $p_1 > 3/2$ in order to apply Theorem \ref{t:hst}, which is outside our range.
\bigskip

This restriction can easily be overcome using the propagation of higher moments. Let us recall a result from \cite{carlen2009}. In our context, it says that
\begin{thm}[{Propagation of moments -- \cite[Theorem 1(i)]{carlen2009}}] \label{t:ccl}
Any solution $f:[0,T] \times \R^d \to [0,\infty)$ of the Boltzmann equation \eqref{e:boltzmann} so that $f_0 \in L\log L \cap L^1_k$ for $k>2$, satisfies $f \in L^\infty([0,T],L^1_k)$.
\end{thm}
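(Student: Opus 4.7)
\textbf{Proof proposal for Theorem~\ref{t:ccl}.} The plan is to derive a closed differential inequality for the weighted moment
\[ M_k(t) := \int_{\R^d} f(t,v) \langle v \rangle^k \dd v \]
by testing the Boltzmann equation against $\varphi(v) = \langle v\rangle^k$ and invoking a Povzner-type angular estimate. Using the standard symmetrization of the collision operator,
\[
\frac{d}{dt} M_k(t) = \frac14 \iiint f(v) f(w) \bigl[\langle v'\rangle^k + \langle w'\rangle^k - \langle v\rangle^k - \langle w\rangle^k\bigr] B(|v-w|,\cos\theta)\,\dd\sigma\,\dd w\,\dd v,
\]
which is well-defined even without an angular cutoff because the symmetrized bracket vanishes to second order in the deflection angle, compensating for the $|\sin\theta|^{-d+1-2s}$ singularity of $b$.

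Second, I would invoke a Povzner-type inequality: after integrating in $\sigma$ against $b(\cos\theta)$, the symmetrized increment satisfies
\[
\int_{S^{d-1}} \bigl[\langle v'\rangle^k + \langle w'\rangle^k - \langle v\rangle^k - \langle w\rangle^k\bigr]\, b(\cos\theta)\,\dd\sigma \;\leq\; -\lambda_k(\langle v\rangle^k + \langle w\rangle^k) + C_k (\langle v\rangle \langle w\rangle)^{k/2},
\]
with $\lambda_k>0$ precisely because $k>2$. This is the classical angular-averaging inequality (Povzner, Desvillettes, Mischler--Wennberg for the cutoff case), extended to the non-cutoff setting by Alexandre--Desvillettes--Villani--Wennberg and later refinements. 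Multiplying by $\alpha(|v-w|/2)$ and integrating against $f(v)f(w)$, the loss term is coercive in $M_k$, while the gain term is bounded by mixed moments of degree $k$ in which each individual factor has growth strictly below $k$, so by Cauchy--Schwarz and energy conservation it is bounded by $M_k^{\alpha}$ for some $\alpha<1$, possibly times a factor $\iint f f_* |v-w|^\gamma\,\dd w \dd v$.

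Third, this last singular factor $|v-w|^\gamma$ is where the $L\log L$ hypothesis enters: for any $\gamma>-d$,
\[
\iint f(v) f(w) |v-w|^\gamma \dd v \dd w \;\leq\; C\bigl(\|f\|_{L^1_2}, \|f\|_{L\log L}\bigr),
\]
by a standard concentration-prevention estimate (entropy bounds prevent $f$ from forming a Dirac mass, keeping $f\otimes f$ away from the diagonal). The $H$-theorem ensures this bound propagates in time. Combining with the coercive loss term produces a differential inequality $M_k'(t) \leq -c\, M_k(t) + C\bigl(1+M_k(t)^\alpha\bigr)$ with $\alpha<1$, which is closed by Grönwall to yield $M_k \in L^\infty([0,T])$.

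The main obstacle is establishing the Povzner-type inequality with a strictly negative coefficient $-\lambda_k$ in the non-cutoff regime: in the cutoff case one can separate gain and loss and apply the inequality to each piece, but here one must exploit, in a quantitative way, the joint integrability against $b(\cos\theta)$ and the second-order cancellation at grazing angles. A secondary difficulty is the singularity of $|v-w|^\gamma$ in the very-soft range $\gamma\in(-d,-2]$, which is exactly why the $L\log L$ hypothesis is imposed.
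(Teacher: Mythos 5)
The paper does not prove Theorem~\ref{t:ccl}; it is quoted verbatim as a result from Carlen--Carvalho--Lu \cite{carlen2009}, so there is no in-paper proof to compare against. Your plan --- test against $\langle v\rangle^k$, use a Povzner-type angular average, and control the residual singularity $\iint f f_* |v-w|^\gamma$ via the $L\log L$ bound --- is indeed the right skeleton and matches the structure of the cited proof.

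However, there is a genuine error in the middle of the argument. You claim that after multiplying the Povzner estimate by $\alpha(|v-w|/2)=|v-w|^\gamma$ and integrating against $f\otimes f$, ``the loss term is coercive in $M_k$,'' leading to the differential inequality $M_k'(t) \le -c\,M_k(t) + C(1+M_k(t)^\alpha)$. That coercivity is specific to hard potentials ($\gamma>0$). In the regime relevant here, $\gamma<0$, the factor $|v-w|^\gamma$ degenerates at infinity: $\int f(w)|v-w|^\gamma\,\dd w \to 0$ as $|v|\to\infty$, so the loss term does \emph{not} dominate a multiple of $\langle v\rangle^k$ and you cannot extract $-c\,M_k$. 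A coercive differential inequality would imply a \emph{uniform-in-time} bound on $M_k$, but the paper explicitly points out that the bound from \cite{carlen2009} ``deteriorates linearly as $t\to\infty$.'' The correct conclusion of the Povzner step for soft potentials is of the form $M_k'(t) \le C_k(\|f\|_{L^1_2},\|f\|_{L\log L})(1+M_k(t))$ (or merely $M_k'\le C_k$ after one interpolates the gain term), which Gr\"onwall closes on any finite interval $[0,T]$ --- which is all the theorem asserts. So your final conclusion is right, but the claimed mechanism is not; if you write the Gr\"onwall step as stated, the argument would appear to prove strictly more than is true.

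Two smaller remarks. First, the assertion that the un-averaged bracket $\langle v'\rangle^k+\langle w'\rangle^k-\langle v\rangle^k-\langle w\rangle^k$ ``vanishes to second order in the deflection angle'' is not literally correct: the bracket is only $O(\theta)$ pointwise. It is the azimuthal average over the $(d-2)$-sphere at fixed $\theta$ that kills the first-order term and produces the needed $O(\theta^2)$ gain; you should make this averaging explicit before claiming integrability against $b(\cos\theta)\approx\theta^{-(d-1)-2s}$. Second, the ``main obstacle'' you flag --- establishing the Povzner inequality with a strictly negative coefficient $-\lambda_k$ in the non-cutoff setting --- is actually not what you need here; for soft potentials you never exploit the sign of that coefficient, only the bound on the gain part, so this difficulty dissolves once the coercivity claim is dropped.
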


The precise upper bound for $\|f\|_{L^1_k}$ obtained in \cite{carlen2009} deteriorates linearly as $t \to \infty$. For the purpose of proving the global existence of smooth solutions, we only use that it does not blow up in finite time.

Let us assume that $f_0 \in L^1_{k_2}$ for some $k_2$ large enough. Applying Theorem \ref{t:ccl}, we know that the $k_2$-moment of $f$ does not blow up
\[ \sup_{t \in [0,t)} \int_{\R^3} \langle v \rangle^{k_2} f(t,v) \dd v \leq M_k.\]
Then, we can interpolate with our bound in $L^\infty(L^3)$ from the Fisher information and get \eqref{e:Lp-moment} for $k = k_1(3/2 - p/2)$. If we take $p = 3/(3+\gamma+2s)+$, we get
\[ k = k_2\left( \frac 32 - \frac 3 {2(3+\gamma+2s)} \right)-.\]
To satisfy the continuation criterion in Theorem \ref{t:hst}, we would need $k_2$ to be larger than some number depending on $\gamma$ and $2s$. For some pairs $\gamma$ and $2s$, this value of $k_2$ is smaller or equal to $2$ and it follows from the conservation of mass and energy. For some values of $\gamma$ and $2s$, $k_2$ is larger than $2$ and the result requires some further propagation of moments that is provided by Theorem \ref{t:ccl}. 


In any case, we observe that Theorem \ref{t:global_existence} follows as a consequence of Theorem \ref{t:main} together with Theorems~\ref{t:hst} and \ref{t:ccl}.

\appendix

\section{Numerical computations with collision kernels}
\label{s:numerical}

Our objective in this section is to explain our numerical tests, and report the results about collision kernels associated with inverse power-law potentials in two and three dimensions. We recall that their kernels are factorized and $\alpha (r) = r^\gamma$. In particular, Theorem~\ref{t:main} can be applied if $|\gamma|\le 2 \sqrt{\Lambda_b}$. 

We verify that these collision kernels  satisfy the inequality \eqref{e:kernel-comparison} with some reference kernel $b_0 = b_\omega$ of the form \eqref{e:subordination} for some weight function $\omega \geq 0$.

\subsection{Computation of the collision kernel for inverse power laws}

While the Boltzmann collision operator makes sense for a variety of collision kernels $B$, only some of them are derived starting from a system of interacting particles. The formulas to compute the collision kernel from a general interaction potential can be found in \cite[Chapter 1]{villani-survey} or in \cite{cercignani-1988}. They are defined after a few steps involving implicit formulas that we explain below.

In the case of a repulsive power-law potential of the form $\psi(r) = \psi_0 r^{-q+1} / (q-1)$, the collision kernel $B(r, \cos \theta)$ takes the form of the product of two functions $B = \alpha(r/2) b_{col}(\cos \theta)$, where $\alpha(r) = r^\gamma$ for $\gamma = (q-2d+1)/(q-1)$ and $b_{col}(\cos \theta)$ being a spherical kernel with a singularity at $\theta = 0$.

To explain the formula for $b_{col}(\cos \theta)$, we should first compute the deviation angle $\theta$ as a function of the impact parameter $p$ as
\[ \theta(p) := \pi - 2p \int_{r_0}^\infty \frac{\dd r}{r^2 \sqrt{1-\frac{p^2}{r^2}-\frac{4\psi_0}{r^{q-1}}}}.\]
Here, $r_0$ is defined to be the positive root of
$$ 1 - \frac{p^2}{r_0^2} - \frac{4 \psi_0}{r_0^{q-1}} = 0.$$
And then, the collision kernel $b_{col}$ is given by
$$ b_{col}(\cos \theta) = -\left( \frac{p}{\sin \theta(p)} \right)^{d-2} \frac{dp}{d\theta}. $$

The parameter $\psi_0$ can be scaled out. It becomes a multiplicative factor of the form $\psi_0^{2s}$ in front of the formula for $b_{col}(\cos \theta)$. We can assume without loss of generality that $\psi_0 = 1/4$.

The formula for $b_{col}$ described above is admittedly complicated. We cannot write an explicit formula for $b_{col}(\cos \theta)$. A few asymptotic properties can be established and are standard. We know its behavior as $\theta \to 0$. For $2s = (d-1)/(q-1)$, we have
\[ \lim_{\theta \to 0} \left\{ b_{col}(\cos \theta) \theta^{d-1+2s} \right\}= \frac 1 {q-1} \left( \sqrt{\pi} \frac{ \Gamma(q/2)}{\Gamma(q/2-1/2)} \right)^{2s}. \]

It is not too difficult to compute the values $b_{col}(\cos \theta)$ numerically, by basically following the same steps described above.

\subsection{The three dimensional case}

Let us focus on the three dimensional case: $d=3$. 

In dimension three, it can be verified that $b_{col}(\cos \theta)$ converges to a constant away from $\theta =0$ as $q \to \infty$ (and equivalently $s\to 0$). Thus, the collision kernel of inverse power-law potentials converges to the standard hard-spheres kernel as $q \to \infty$.

The fractional Laplacian $(-\Delta)^s$ on the sphere $S^2$ is defined using spectral calculus in terms of how it acts on spherical harmonics
\[ (-\Delta)^s Y_\ell^m = \lambda_\ell^s Y_\ell^m.\]

The kernel of $(-\Delta)^s$ on the sphere $S^2$, which we may write $b_{(-\Delta)^s}(\cos \theta)$, has a similar asymptotic behavior as the collision kernel $b_{col}(\cos \theta)$. There is no explicit formula for $b_{(-\Delta)^s} $either. It can be written in the form \eqref{e:subordination} with
\[ \omega_s(t) = t^{-1-s}/C_s,\]
where
\[ C_s = \int_0^\infty (1-e^{-t}) t^{-1-s} \dd t = -\Gamma(-s).\]

If we consider other weight functions $\omega(t)$ which are comparable to $\omega_s(t)$ as $t \to 0$, we obtain other kernels $b_\omega$ with the same asymptotic behavior at $\theta \to 0$ as $b_{(-\Delta)^s}$, which coincides with the asymptotic behavior of $b_{col}$. For any such kernel, we are able to precisely compute the limit
\[ \lim_{\theta \to 0} \frac{b_{col}(\cos \theta)}{b_\omega(\cos \theta)}. \]
Thus, the challenge to compute the constants $c_1$ and $C_2$ in the inequality \eqref{e:kernel-comparison} arises mostly from values of $\theta$ that are not too close to zero or $\pi$.

Computing $b_\omega$ numerically is nontrivial. All we know is that the integro-differential operator with kernel $b_\omega$ has the spherical harmonics as eigenfunctions with computable eigenvalues. For any smooth function $f$, we can compute the integro-differential operator
\[ \B_\omega f (\sigma) := \int_{S^{d-1}} (f(\sigma') - f(\sigma)) b_\omega(\sigma' \cdot \sigma) \dd \sigma',\]
by decomposing the function $f$ as a sum of spherical harmonics and multiplying each term by the corresponding eigenvalue. We must reverse engineer the kernel $b_\omega$ from the values of this operator. A simple way to achieve it is by considering a function $f$ which is smooth but it is very concentrated around $\sigma' \cdot e_1 = x_0$. From the value of $\B_\omega f(e_1)$, we estimate the value of $b_\omega(x_0)$. To implement this computation with reasonable accuracy, we need to use a large resolution and many modes. This method works farily well, especially if $\theta$ is not too close to zero. As we discussed before, we already know the approximate value of the ratio $b_{col}/b_\omega$ when $\theta$ is almost zero.

Using the ideas above, we ran numerical simulations to estimate the ratio of $c_1/C_2$ for the inequality \eqref{e:kernel-comparison}. When we compared the collision kernel $b_{col}$ with the kernel of the spherical fractional Laplacian, we observed that $c_1/C_2$ is approximately $0.71$ for $s = 3/4$. The ratio improves as $s \to 1$. We experimented with reference kernels of the form \eqref{e:subordination} with different weight functions $\omega(t)$ of the form $\omega(t) = t^{-1-s} \omega_1(t)$, for some nonnegative function $\omega_1$ so that $\omega_1(0)=1$.

The kernel $b_\omega$ matches the collision kernel of power-law potentials $b_{col}$ very closely for $s \in [3/4,1)$ with the following choice of weight function
\begin{equation}  \label{e:omega_guess}
\omega(t) := \left\{ 1 - \min\left( \frac{13}8 - \frac 3 2 s, \frac 4 {10} \right) (1-\exp(-2t)) \right\} t^{-1-s}.
\end{equation}
In this case, we obtained that \eqref{e:spherical-inequality} holds for $2s \in [1.5, 2)$ with $c_1/C_2 > 0.95$. The following table summarizes some of the results we got from the numerical computations for various values of the exponent $q \in (2,7/3]$. The computed lower bound for $2\sqrt{\Lambda_b}$ is confortably above the value of $|\gamma|$ in every case.
\begin{center}
\begin{tabular}{|r|r|r|r|r|}
\hline
$q$ & $2s$ & $\gamma$ & $c_1/C_2$ & lower bound for $2\sqrt{\Lambda_b}$ \\
\hline
2.01 & 1.98 & -2.96 & 0.98 & 4.64 \\
2.05 & 1.90 & -2.81 & 0.99 & 4.59 \\
2.10 & 1.82 & -2.64 & 0.99 & 4.54 \\
2.25 & 1.60 & -2.20 & 0.97 & 4.38 \\
2.33 & 1.50 & -2.00 & 0.98 & 4.36 \\
\hline 
\end{tabular}
\end{center}

The weight function \eqref{e:omega_guess} was obtained by guessing, trial and error. We do not claim that $b_{col}$ is an exact scalar multiple of $b_\omega$ for the weight $\omega$ given by \eqref{e:omega_guess}. All that numerical simulations show is that these two kernels are very close to each other, more than enough of what we need to apply the inequality \eqref{e:spherical-inequality}. It is possible that a further adjustment of the weight $\omega$ produces an even better approximation of $b_{col}$. It is tempting to believe that there exists some weight $\omega$ for which $b_\omega = b_{col}$. For now, that weight seems very difficult to find.

The lower bounds for $2\sqrt{\Lambda_{b_\omega}}$ in the table above are computed using Proposition \ref{p:main-for-subordinate}, for the kernel $\omega$, and Proposition \ref{p:perturbation} with the value of $c_1/C_2$ obtained numerically.

The code for our numerical experiments in 3D, with appropriate documentation, can be found in \cite{numerics}.

\subsection{The two dimensional case}

In 2D, we started with basic numerical computations comparing the kernel $b_{col}$ for inverse power-law collision operators with the kernel $b_s$ of the fractional Laplacian. In this case, when $q > 2$, we have $s \in (0,1/2]$ and $\gamma \in (-1,1)$. For $q \in (3/2,2)$, we have $2s \in (1/2,1)$ and $\gamma \in (-3,-1)$. The optimal ratio $c_1/C_2$ for which \eqref{e:kernel-comparison} holds seems to equal one for $q = 2$ ($s \to 1/2$) and changes monotonically with respect to $q$ in both directions. For $s=0.1$ (which corresponds to $q=11$), we get $c_1/C_2 \approx 2/3$. We use Corollary \ref{c:Lambda_fractional_laplacian} to estimate the parameter $\Lambda_{b_s}$ and \eqref{e:kernel-comparison} to get a lower bound for the kernel $b$ in each case. The lower bound we obtain for $\Lambda_b$ with this basic computation is good enough to apply our Theorem \ref{t:main} except when $q$ is very close to $3/2$. The assumption of Theorem \ref{t:main} is satisfied by a confortable margin certainly for $q>2$. The following table summarizes the results of the computation.
\begin{table}[ht!] 
  \centering
\begin{tabular}{|r|r|r|r|r|r|}
\hline
$q$ & $2s$ & $\gamma$ & $c_1/C_2$ & lower bound for $2\sqrt{\Lambda_b}$ \\
\hline
1.51 & 1.96 & {\bf -2.92} & 0.44 & {\bf 2.65} \\
1.55 & 1.82 & -2.64 & 0.52 & 2.80 \\
1.67 & 1.50 & -2.00 & 0.72 & 3.10 \\
1.75 & 1.33 & -1.67 & 0.82 & 3.23 \\
1.90 & 1.11 & -1.22 & 0.95 & 3.34 \\
2.00 & 1.00 & -1.00 & 1.00 & 3.36 \\
2.10 & 0.91 & -0.82 & 0.96 & 3.25 \\
4.00 & 0.33 & 0.33 & 0.84 & 2.75 \\
10.00 & 0.11 & 0.78 & 0.68 & 2.39 \\
20.00 & 0.05 & 0.89 & 0.59 & 2.21 \\
40.00 & 0.03 & 0.95 & 0.53 & 2.09 \\
\hline
\end{tabular}
\label{t:2D}
\end{table}

It is interesting to remark that as $q \to \infty$ and $s \to 0$, the operator $\B$ corresponding to $b_{col}$ converges to the 2D hard-spheres model. The value of $\cP$ in Proposition \ref{p:poincare} converges to the corresponding one for hard spheres and also does the lower bound for $b_{col}$. Thus, the limit of $\Lambda_b$ as $s \to 0$ cannot be smaller than the one for hard spheres, which is $\sqrt{2}$. This will not necessarily be reflected by the numerical computations, because they do not rely on the formula of Proposition \ref{p:poincare} to estimate $\cP$. They are based purely on pointwise upper and lower bounds between the kernel $b_{col}$ and the reference kernel $b_0$ of the fractional Laplacian. This analysis shows, however, that the assumption of Theorem \ref{t:main} will surely hold for power-law potentials in 2D when $q$ is large enough.

Note also that the hard spheres kernel in 2D (which coincides with the limit of the power-law kernels as $q \to \infty$) has the explicit formula $b(\cos \theta) + b(\cos (\pi-\theta)) = \sin(\theta/2) + \cos(\theta/2)$. For $s$ small (and $q$ large), the power-law collision kernels in 2D will not be monotone decreasing for $\theta \in [0,\pi/2]$. Thus, there is no hope to write them exactly in the form \eqref{e:subordination}.

For $q$ close to $3/2$, the computation comparing the collision kernel with the fractional Laplacian are not good enough to ensure the assumption of Theorem \ref{t:main}. Thus, we will use an ad-hoc weight function $\omega$ as we did for the 3D case. We compared numerically the collision kernel for power-law potentials in 2D with $q \in [3/2,2]$ with the collision kernel of the form \eqref{e:subordination} with the weight 
\[ \omega(t) = t^{-1-s} \left\{ 1 + 2 (2s-1)^2 (1-\exp(-2t)) \right\}. \]
We got the results summarized in the following table. \medskip

\begin{center}
\begin{tabular}{|r|r|r|r|r|r|}
\hline
$q$ & $2s$ & $\gamma$ & $c_2/C_1$ & lower bound for $2\sqrt{\Lambda_b}$ \\
\hline
1.99 & 1.01 & -1.02 & 1.00 & 3.36 \\
1.90 & 1.11 & -1.22 & 0.96 & 3.36 \\
1.80 & 1.25 & -1.50 & 0.95 & 3.41 \\
1.70 & 1.43 & -1.86 & 0.97 & 3.52 \\
1.60 & 1.67 & -2.33 & 0.96 & 3.63 \\
1.55 & 1.82 & -2.64 & 0.94 & 3.69 \\
1.51 & 1.96 & -2.92 & 0.93 & 3.79 \\
1.501 & 1.996 & -2.99 & 0.93 & 3.82 \\
\hline
\end{tabular}
\end{center}
\medskip

In all cases, we get lower bounds for $\Lambda_b$ for which our assumption for Theorem \ref{t:main} is satisfied by a confortable margin.

\section{Vector fields generating rotations}
\label{s:vector-fields}

Following \cite{landaufisher2023}, we construct a family of vector fields $\bb_1, \dots, \bb_N$, for $N=d(d-1)/2$ that are tangential to the sphere $S^{d-1}$ and generate the Lie algebra $so(d)$. They can be used as an alternative to the transformations $P_{\sigma,\sigma'}$ described in Section \ref{s:maps-between-tangent-spaces} to derive formulas that involve $\Gamma^2_{\Delta,\B}$ and $\nabla \B f$.

For $d=2$, we would consider just the single vector field $\bb_1(\sigma) = \sigma^\perp$. 

In three dimensions $d=3$ it is the set of three vector fields used in \cite{landaufisher2023}.
\begin{equation} \label{e:vectorsb}
\bb_1(\sigma) = \begin{pmatrix} 0 \\ -\sigma_3 \\ \sigma_2 \end{pmatrix}, \qquad \bb_2(\sigma) = \begin{pmatrix} \sigma_3 \\ 0 \\ -\sigma_1 \end{pmatrix}, \qquad \bb_3(\sigma) = \begin{pmatrix} -\sigma_2 \\ \sigma_1 \\ 0 \end{pmatrix}.
\end{equation}

In higher dimension $d \geq 4$, there are $d(d-1)/2$ vector fields that general the Lie group $so(d)$. They can be written in terms of pairs of indices $1 \leq i < j \leq d$ as 
\[ \bb_{ij}(\sigma) = \sigma_i e_j - \sigma_j e_i.\]
The flow of the vector field $\bb_{ij}$ consists in a rotation of the sphere on the $ij$-plane. In particular, the flow of any of these vector fields is an isometry, and the vector fields are divergence free.

By reindexing if necessary, we write these vector fields as $\bb_1,\dots,\bb_N$.

The following simple lemma is justified in \cite{landaufisher2023}.

\begin{lemma}[Gradient and Laplacian through the $\bb_i$'s] \label{l:vector-fields}
For any smooth functions $f,g:S^{d-1} \to \R$, we have
\begin{align*}
\Delta f &= \sum_{i=1}^N \bb_i \cdot \nabla (\bb_i \cdot \nabla f), \\
|\nabla f|^2 &= \sum_{i=1}^N (\bb_i \cdot \nabla f)^2, \\
\nabla f \cdot \nabla g &= \sum_{i=1}^N (\bb_i \cdot \nabla f)(\bb_i \cdot \nabla g).
\end{align*}
\end{lemma}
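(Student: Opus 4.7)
The plan is to reduce all three identities to a single pointwise matrix identity at each $\sigma \in S^{d-1}$, namely
\[ \sum_{1 \le i < j \le d} \bb_{ij}(\sigma) \otimes \bb_{ij}(\sigma) = I_d - \sigma \otimes \sigma, \]
i.e.\ the right-hand side is the orthogonal projection $\Pi_{\sigma^\perp}$ onto $T_\sigma S^{d-1}$. I would verify this by a direct computation of the $(k,\ell)$ entry: expanding $(\sigma_i e_j - \sigma_j e_i) \otimes (\sigma_i e_j - \sigma_j e_i)$ and summing over $i<j$, the diagonal entries collapse to $\sum_{i \ne k}\sigma_i^2 = 1 - \sigma_k^2$, while the off-diagonal entries collapse to $-\sigma_k\sigma_\ell$, which matches $I_d - \sigma\otimes \sigma$ exactly. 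The cases $d=2$ and $d=3$ from \eqref{e:vectorsb} serve as sanity checks.

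Once this identity is in hand, the second and third formulas are immediate. Since $f, g$ are defined on $S^{d-1}$, their gradients $\nabla f(\sigma), \nabla g(\sigma)$ lie in $T_\sigma S^{d-1}$, so
\[ \nabla f \cdot \nabla g \;=\; \nabla f \cdot (I_d - \sigma\otimes\sigma)\nabla g \;=\; \sum_{i<j}(\bb_{ij}\cdot \nabla f)(\bb_{ij}\cdot \nabla g), \]
and specializing $g = f$ gives the $|\nabla f|^2$ identity.

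For the Laplacian identity, I would use that each $\bb_{ij}$ is tangent to $S^{d-1}$ (clear from $\bb_{ij}(\sigma)\cdot \sigma = 0$) and, crucially, divergence-free on $S^{d-1}$: its flow is the one-parameter rotation group in the $ij$-plane, which is an isometry and in particular volume-preserving. Writing the surface gradient of $f$ via the overcomplete frame furnished by the pointwise identity,
\[ \nabla f \;=\; \sum_{i<j}(\bb_{ij}\cdot \nabla f)\,\bb_{ij}, \]
and taking the spherical divergence, the product rule produces $\sum_{i<j} \bb_{ij}\cdot\nabla(\bb_{ij}\cdot\nabla f) + (\bb_{ij}\cdot\nabla f)\,\mathrm{div}_{S^{d-1}}\bb_{ij}$; the second sum vanishes by the divergence-free property, leaving precisely $\Delta f = \sum_{i<j}\bb_{ij}\cdot \nabla(\bb_{ij}\cdot\nabla f)$.

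The only step with any real content is the pointwise identity $\sum_{i<j}\bb_{ij}\otimes\bb_{ij}=I-\sigma\otimes\sigma$; everything else is bookkeeping. The divergence-free property of $\bb_{ij}$ can alternatively be checked by a one-line computation in the ambient $\R^d$ using that $\bb_{ij}$ extends to a linear vector field whose Euclidean divergence is zero and which is tangent to every sphere centered at the origin.
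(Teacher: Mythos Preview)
Your argument is correct. The paper itself does not prove this lemma at all: it simply states that the result ``is justified in \cite{landaufisher2023}'' and moves on. Your proposal therefore supplies what the paper omits, and does so cleanly. The pointwise identity $\sum_{i<j}\bb_{ij}\otimes\bb_{ij}=I_d-\sigma\otimes\sigma$ is indeed the heart of the matter; your entrywise verification is right (diagonal entries $1-\sigma_k^2$, off-diagonal $-\sigma_k\sigma_\ell$), and from it the second and third identities follow as you say. For the Laplacian identity, your use of $\Delta f=\dv_{S^{d-1}}\nabla f$ together with the product rule and $\dv_{S^{d-1}}\bb_{ij}=0$ (Killing field, hence volume-preserving flow) is the standard and correct route. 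There is nothing to fix.
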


With the help of these vector fields, we can provide an explicit expression for $\Gamma^2_{\B,\Delta}(f,f)$.

\begin{lemma}[Iterated carré du champ through the $\bb_i$'s] \label{l:Gamma2-with-vector-fields}
For any smooth function $f : S^{d-1} \to \R$, we have,
\[ \Gamma^2_{\B,\Delta}(f,f) = \frac 12 \sum_{i=1}^N \int_{S^{d-1}} \bigg( b_i(\sigma') \cdot \nabla f(\sigma') - b_i(\sigma) \cdot \nabla f(\sigma)\bigg)^2 b (\sigma' \cdot \sigma) \dd \sigma'. \]
In particular, $\Gamma^2_{\B,\Delta}(f,f) \geq 0$.
\end{lemma}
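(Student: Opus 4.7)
The plan is to start from the definition of the iterated carré du champ in \eqref{e:2nd-order-carré-du-champ} and rewrite each ingredient in the ``vector field'' basis provided by Lemma~\ref{l:vector-fields}. Since $\Gamma_\Delta(f,f) = |\nabla f|^2$, we have
\[
\Gamma^2_{\B,\Delta}(f,f) \;=\; \tfrac{1}{2}\,\B\bigl(|\nabla f|^2\bigr) - \nabla f\cdot\nabla \B f.
\]
By Lemma~\ref{l:vector-fields}, $|\nabla f|^2 = \sum_i (\bb_i\cdot\nabla f)^2$ and $\nabla f \cdot \nabla \B f = \sum_i (\bb_i\cdot\nabla f)(\bb_i\cdot\nabla \B f)$. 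So if we set $g_i(\sigma):=\bb_i(\sigma)\cdot\nabla f(\sigma)$, the identity to prove becomes
\[
\Gamma^2_{\B,\Delta}(f,f)\;=\;\sum_{i=1}^N\Bigl(\tfrac{1}{2}\B(g_i^2)-g_i\,\B g_i\Bigr)\;=\;\sum_{i=1}^N \Gamma_\B(g_i,g_i),
\]
and the announced formula follows at once from the pointwise formula $\Gamma_\B(g,g)=\tfrac{1}{2}\int (g(\sigma')-g(\sigma))^2 b(\sigma'\cdot\sigma)\,\dd\sigma'$.

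Thus the only nontrivial step is the commutation identity
\[
\bb_i\cdot\nabla(\B f)\;=\;\B(\bb_i\cdot\nabla f), \qquad i=1,\dots,N,
\]
which is exactly what lets us pull $\bb_i\cdot\nabla$ through $\B$ inside the second term. I would deduce it from the rotational invariance of $\B$: each $\bb_i$ is the infinitesimal generator of a one-parameter family of rotations $R_t\in SO(d)$, and the kernel $b(\sigma'\cdot\sigma)$ depends only on the inner product $\sigma'\cdot\sigma$, hence is invariant under the simultaneous rotation $(\sigma,\sigma')\mapsto(R_t\sigma,R_t\sigma')$. Consequently $\B(f\circ R_t)=(\B f)\circ R_t$, and differentiating at $t=0$ yields the desired commutation. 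I would write out this differentiation cleanly by parametrizing $R_t\sigma = \sigma + t\,\bb_i(\sigma) + O(t^2)$ and comparing the two sides of the integral defining $\B$.

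Once the commutation is in hand, the rest is purely algebraic: expand $\Gamma_\B(g_i,g_i)$ using its integral formula from \eqref{e:B-carré-du-champ}, sum over $i$, and observe that each term is manifestly nonnegative, which gives the final assertion $\Gamma^2_{\B,\Delta}(f,f)\geq 0$. The main obstacle is really just verifying the rotational-invariance commutation cleanly; no estimates or inequalities are required, only a change of variables $\sigma'\mapsto R_t\sigma'$ inside the integral defining $\B$, which leaves both $b(\sigma'\cdot\sigma)$ and the measure invariant. As a consistency check, the resulting expression agrees with the one obtained in Lemma~\ref{l:Gamma2-with-T}: both compute the same quantity, the difference being the choice of a basis of tangent rotations $\{\bb_i\}$ versus the transport operators $P_{\sigma',\sigma}$ from \cite{villani1998boltzmann}.
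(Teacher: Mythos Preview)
Your proof is correct and follows the same structure as the paper's: both start from $\Gamma^2_{\B,\Delta}(f,f)=\tfrac12\B(|\nabla f|^2)-\nabla f\cdot\nabla\B f$, use Lemma~\ref{l:vector-fields}, reduce everything to the commutation identity $\bb_i\cdot\nabla(\B f)=\B(\bb_i\cdot\nabla f)$ (which is precisely Lemma~\ref{l:flowing-B-with-bi}), and then recognize the result as $\sum_i\Gamma_\B(g_i,g_i)$. The only difference is how that commutation is established: the paper proves it by differentiating inside the integral, applying Lemma~\ref{l:spherical-shifts}, and integrating by parts using that $\bb_i$ is divergence-free, whereas you obtain it more conceptually from the rotational invariance $\B(f\circ R_t)=(\B f)\circ R_t$ and differentiation at $t=0$; both arguments are valid and yield the same identity.
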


We first prove the following technical lemmas.

\begin{lemma} \label{l:spherical-shifts}
Given $b : \R \to \R$, consider the function from $S^{d-1} \times S^{d-1}$ given by $(\sigma,\sigma') \mapsto b(\sigma \cdot \sigma')$. Then, for any of the vector fields $\bb_k$ on $TS^{d-1}$, we have
\[ \bb_k(\sigma) \cdot \nabla_\sigma [b(\sigma \cdot \sigma')] = - \bb_k(\sigma') \cdot \nabla_{\sigma'} [b(\sigma \cdot \sigma')].\]
\end{lemma}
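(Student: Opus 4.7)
The plan is to exploit the rotational invariance of the function $(\sigma,\sigma')\mapsto b(\sigma\cdot\sigma')$. Indeed, each vector field $\bb_k = \bb_{ij}$ is the infinitesimal generator of a one-parameter family of rotations $R_t^{(k)}$ of $\R^d$, so
\[ b(R_t^{(k)}\sigma \cdot R_t^{(k)}\sigma') = b(\sigma\cdot\sigma')\]
for all $t$. Differentiating this identity at $t=0$ yields
\[ \bb_k(\sigma)\cdot\nabla_\sigma [b(\sigma\cdot\sigma')] + \bb_k(\sigma')\cdot\nabla_{\sigma'}[b(\sigma\cdot\sigma')] = 0,\]
which is exactly the claim.

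Alternatively, the same identity can be verified by a direct computation that has the advantage of being entirely elementary. The chain rule gives $\nabla_\sigma [b(\sigma\cdot\sigma')] = b'(\sigma\cdot\sigma')\,\sigma'$ in $\R^d$. Since $\bb_k(\sigma)$ is tangent to the sphere at $\sigma$, projecting $\sigma'$ onto $T_\sigma S^{d-1}$ does not affect the inner product, and hence
\[ \bb_k(\sigma)\cdot\nabla_\sigma[b(\sigma\cdot\sigma')] = b'(\sigma\cdot\sigma')\,\bb_k(\sigma)\cdot\sigma'.\]
The analogous expression holds for the derivative with respect to $\sigma'$. It thus suffices to check that $\bb_k(\sigma)\cdot\sigma' = -\bb_k(\sigma')\cdot\sigma$. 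With $\bb_k=\bb_{ij}(\sigma)=\sigma_i e_j-\sigma_j e_i$, both sides become $\sigma_i\sigma'_j-\sigma_j\sigma'_i$ up to a sign, making the antisymmetry transparent.

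There is no genuine obstacle here: the result is a manifestation of the fact that $b(\sigma\cdot\sigma')$ is a function of the diagonal (coincident) action of $SO(d)$ on $S^{d-1}\times S^{d-1}$, together with the skew-symmetry of the rotation generators. Either of the two arguments above can be carried out in a few lines.
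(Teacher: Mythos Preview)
Your proposal is correct. Your second argument---the direct computation---is essentially the paper's own proof: both compute $\nabla_\sigma[b(\sigma\cdot\sigma')] = \db(\sigma\cdot\sigma')\,\sigma'$, use that $\bb_k(\sigma)\perp\sigma$, and reduce to the antisymmetry identity $\bb_k(\sigma)\cdot\sigma' = -\bb_k(\sigma')\cdot\sigma$. The paper phrases this last step slightly differently, writing $\bb_k(\sigma)=A_k\sigma$ for an antisymmetric matrix $A_k$ and noting $\bb_k(\sigma)-\bb_k(\sigma')\perp\sigma-\sigma'$, but this amounts to the same computation you carry out in coordinates.

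Your first argument---differentiating the invariance $b(R_t^{(k)}\sigma\cdot R_t^{(k)}\sigma') = b(\sigma\cdot\sigma')$ at $t=0$---is a genuinely different and more conceptual route that the paper does not take. It has the advantage of making the geometric origin of the identity transparent (invariance under the diagonal $SO(d)$ action) and requires no coordinate computation at all; the paper's direct calculation, on the other hand, is self-contained and avoids any appeal to the flow of $\bb_k$.
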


\begin{proof}
We start with the explicit computations
\begin{align*}
\nabla_\sigma [b(\sigma \cdot \sigma')] &= \db(\sigma'\cdot \sigma) \ \sigma',\\
\nabla_{\sigma'} [b(\sigma \cdot \sigma')] &= \db(\sigma'\cdot \sigma) \ \sigma.
\end{align*}

Since $\bb_k(\sigma) \in T_{S^{d-1}}(\sigma)$, then $\bb_k(\sigma) \perp \sigma$. Thus, 
\begin{align*}
\bb_k(\sigma) \cdot \nabla_\sigma [b(\sigma \cdot \sigma')] &= \db(\sigma'\cdot \sigma) \ \bb_k(\sigma) \cdot \sigma' \\
&= \db(\sigma'\cdot \sigma) \ \bb_k(\sigma) \cdot (\sigma'-\sigma).
\end{align*}

The vector field $\bb_k(\sigma)$ is given by $\bb_k(\sigma) = A_k\sigma$ for some antisymmetric matrix $A_k$. In particular, it extends as a linear operator in $\R^d$ and
$\bb_k(\sigma) - \bb_k(\sigma') \perp \sigma - \sigma'$. Therefore $\bb_k(\sigma) \cdot (\sigma'-\sigma) = \bb_k(\sigma') \cdot (\sigma'-\sigma)$, and we get
\begin{align*}
\bb_k(\sigma) \cdot \nabla_\sigma [b(\sigma \cdot \sigma')] &= \db(\sigma'\cdot \sigma) \ \bb_k(\sigma') \cdot (\sigma'-\sigma) \\
&= -\db(\sigma'\cdot \sigma) \ \bb_k(\sigma') \cdot \sigma \\
&= -\bb_k(\sigma') \cdot \nabla_{\sigma'} [b(\sigma \cdot \sigma')]. \qedhere
\end{align*}
\end{proof}

\begin{lemma} \label{l:flowing-B-with-bi}
For any of the vector fields $\bb_i$ on $TS^{d-1}$, the following expression holds
\[ \bb_i(\sigma) \cdot \nabla_\sigma \B f = \int_{\sigma' \in S^{d-1}} (\bb_i(\sigma') \cdot \nabla f' - \bb_i(\sigma) \cdot \nabla f) b(\sigma'\cdot \sigma) \dd \sigma'.\]
\end{lemma}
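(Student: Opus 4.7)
The plan is to differentiate under the integral sign and convert a $\sigma$-derivative of $b(\sigma'\cdot\sigma)$ into a $\sigma'$-derivative, and then integrate by parts on the sphere using the fact that each $\bb_i$ generates rotations and is therefore divergence-free.

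More concretely, I would start from the definition
\[
  \bb_i(\sigma) \cdot \nabla_\sigma \B f(\sigma) = \int_{S^{d-1}} \bb_i(\sigma) \cdot \nabla_\sigma \Bigl[ (f(\sigma') - f(\sigma)) \, b(\sigma'\cdot \sigma) \Bigr] \dd \sigma'.
\]
Expanding the derivative of the product yields two pieces. The first,
\[
  -\int_{S^{d-1}} \bb_i(\sigma) \cdot \nabla f(\sigma) \, b(\sigma' \cdot \sigma) \dd \sigma',
\]
already has the desired form. The second piece is
\[
  \int_{S^{d-1}} (f(\sigma') - f(\sigma)) \, \bb_i(\sigma) \cdot \nabla_\sigma [b(\sigma' \cdot \sigma)] \dd \sigma',
\]
and is the one that needs massaging.

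The key step is to invoke Lemma~\ref{l:spherical-shifts}, which rewrites $\bb_i(\sigma) \cdot \nabla_\sigma [b(\sigma'\cdot\sigma)] = -\bb_i(\sigma') \cdot \nabla_{\sigma'} [b(\sigma'\cdot\sigma)]$. After this substitution, the $\sigma'$-integrand is a divergence-free vector field $\bb_i(\sigma')$ hitting a factor depending only on $\sigma'$ (with $\sigma$ frozen). Since each $\bb_i$ is a Killing field on $S^{d-1}$ (the flow of rotations preserves the spherical volume, so $\dv_{S^{d-1}} \bb_i = 0$), standard integration by parts on the sphere gives
\[
  \int_{S^{d-1}} h(\sigma') \, \bb_i(\sigma') \cdot \nabla_{\sigma'} g(\sigma') \dd \sigma' = -\int_{S^{d-1}} g(\sigma') \, \bb_i(\sigma') \cdot \nabla_{\sigma'} h(\sigma') \dd \sigma'.
\]
Applying this with $h(\sigma') = f(\sigma') - f(\sigma)$ (note $\nabla_{\sigma'} h = \nabla f(\sigma')$) and $g(\sigma') = b(\sigma'\cdot\sigma)$, the second piece becomes
\[
  \int_{S^{d-1}} \bb_i(\sigma') \cdot \nabla f(\sigma') \, b(\sigma' \cdot \sigma) \dd \sigma'.
\]
Summing the two pieces produces exactly the claimed identity.

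The only real subtlety is the sign tracking in the integration by parts, together with checking that the boundary term vanishes (which it does automatically on a closed manifold like $S^{d-1}$). Lemma~\ref{l:spherical-shifts} is doing the geometric work of relating the tangential $\bb_i$-derivative at $\sigma$ to a tangential $\bb_i$-derivative at $\sigma'$, which is the nontrivial ingredient; once it is invoked, the remainder is a straightforward divergence-theorem calculation.
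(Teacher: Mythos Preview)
Your proposal is correct and follows essentially the same route as the paper: differentiate under the integral, split into two terms, apply Lemma~\ref{l:spherical-shifts} to convert the $\sigma$-derivative of $b$ into a $\sigma'$-derivative, and then integrate by parts using that $\bb_i$ is divergence-free. The paper also begins by noting that one may assume $b$ smooth and pass to the general case by approximation, which you might mention explicitly, but otherwise the arguments coincide.
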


\begin{proof}
Without loss of generality, we assume that the kernel $b$ is smooth. The general case follows by a standard approximation procedure.

We differentiate within the integral expression for $\B$
\begin{align*}
\bb_i(\sigma) \cdot \nabla_\sigma \B f &= \bb_i(\sigma) \cdot \left( \int (f'-f) b(\sigma' \cdot \sigma) \dd \sigma' \right) \\
&= \int -\bb_i(\sigma) \cdot \nabla f \ b(\sigma' \cdot \sigma) + (f'-f) \bb_i(\sigma) \cdot \nabla_\sigma [b(\sigma' \cdot \sigma)] \dd \sigma' \\
\intertext{We apply Lemma \ref{l:spherical-shifts} for the second term.}
&= \int -\bb_i(\sigma) \cdot \nabla f \ b(\sigma' \cdot \sigma) - (f'-f) \bb_i(\sigma') \cdot \nabla_{\sigma'} [b(\sigma' \cdot \sigma)] \dd \sigma' \\
\intertext{We integrate by parts the second term recalling that $\bb_i(\sigma')$ is divergence free.}
&= \int (\bb_i(\sigma') \cdot \nabla f' -\bb_i(\sigma) \cdot \nabla f) \ b(\sigma' \cdot \sigma) \dd \sigma'. \qedhere
\end{align*}
\end{proof}

We can now prove Lemma~\ref{l:Gamma2-with-vector-fields}.

\begin{proof}[Proof of Lemma~\ref{l:Gamma2-with-vector-fields}]
By definition
\begin{align*}
\Gamma_{\B,\Delta}^2(f,f) &= \frac 12 \B \Gamma_\Delta(f,f) - \Gamma_\Delta(f,\B f) \\
&= \frac 12 \int \left( |\nabla f(\sigma')|^2 - |\nabla f(\sigma)|^2 \right) b(\sigma'\cdot\sigma) \dd \sigma' - \nabla f(\sigma) \cdot \nabla\left( \int (f'-f) b(\sigma'\cdot \sigma) \dd \sigma' \right), \\
\intertext{using Lemma \ref{l:vector-fields},}
&= \sum_i \frac 12 \int \left( (b_i(\sigma') \cdot \nabla f')^2 - (b_i(\sigma) \cdot \nabla f)^2 \right) b(\sigma'\cdot\sigma) \dd \sigma' \\ &\qquad - \bigg( b_i(\sigma) \cdot \nabla f(\sigma) \bigg)\left( b_i(\sigma) \cdot \nabla \int (f'-f) b(\sigma'\cdot \sigma) \dd \sigma' \right), \\
\intertext{we apply Lemma \ref{l:flowing-B-with-bi} for the second term,}
&= \sum_i \frac 12 \int \left( (b_i(\sigma') \cdot \nabla f')^2 - (b_i(\sigma) \cdot \nabla f)^2 \right) b(\sigma'\cdot\sigma) \dd \sigma' \\ &\qquad - \bigg( b_i(\sigma) \cdot \nabla f(\sigma) \bigg) \left( \int \big(b_i(\sigma') \cdot \nabla f'- b_i(\sigma) \cdot \nabla f \big) b(\sigma'\cdot \sigma) \dd \sigma' \right) \\
&= \sum_i \frac 12 \int \left( b_i(\sigma') \cdot \nabla f' - b_i(\sigma) \cdot \nabla f \right)^2 b(\sigma'\cdot\sigma) \dd \sigma'. \qedhere
\end{align*}
\end{proof}

\bibliographystyle{plain}
\bibliography{boltzmannfisher}

\end{document}